\theoremstyle{plain}
\numberwithin{equation}{section}
\newtheorem{theorem}{Theorem}[section]
\newtheorem{proposition}[theorem]{Proposition}
\newtheorem{lemma}[theorem]{Lemma}
\newtheorem{remark}[theorem]{Remark}
\newtheorem{remarks}[theorem]{Remark}
\newtheorem{hypo}[theorem]{Hypothesis}
\newtheorem{definition}[theorem]{Definition}
\newcommand{\be}{\begin{equation}}
\newcommand{\ee}{\end{equation}}
\newcommand{\uno}{\mathds{1}}
\newcommand{\e}{\varepsilon}
\newcommand{\ov}{\overline}
\newcommand{\R}{\mathbb R}
\newcommand{\C}{\mathbb C}
\newcommand{\Z}{\mathbb Z}
\newcommand{\N}{\mathbb N}
\newcommand{\T}{\mathbb T}
\newcommand{\s }{\sigma }
\newcommand{\ii }{{\rm i} }
\newcommand{\x }{\xi }
\newcommand{\pa}{\partial}
\newcommand{\opbw}{{Op^{\mathrm{BW}}}}
\def\hat{\widehat}
\def\bar{\overline}
\def\ba{\begin{aligned}}
\def\ea{\end{aligned}}
\def\beginm{\begin{multline}}
\def\endm{\end{multline}}
\providecommand{\vect}[2]{{\bigl[\begin{smallmatrix}#1\\#2\end{smallmatrix}\bigr]}}   
\providecommand{\sm}[4]{{\bigl[\begin{smallmatrix}#1&#2\\#3&#4\end{smallmatrix}\bigr]}}
\def\l@subsection{\@tocline{2}{0pt}{2.5pc}{5pc}{}}
\def\l@subsubsection{\@tocline{3}{0pt}{4.5pc}{5pc}{}}
\renewcommand\tocchapter[3]{%
  \indentlabel{\@ifnotempty{#2}{\ignorespaces#2.\quad}}#3%
}
\newcommand\@dotsep{4.5}
\def\@tocline#1#2#3#4#5#6#7{\relax
  \ifnum #1>\c@tocdepth 
  \else
    \par \addpenalty\@secpenalty\addvspace{#2}%
    \begingroup \hyphenpenalty\@M
    \@ifempty{#4}{%
      \@tempdima\csname r@tocindent\number#1\endcsname\relax
    }{%
      \@tempdima#4\relax
    }%
    \parindent\z@ \leftskip#3\relax \advance\leftskip\@tempdima\relax
    \rightskip\@pnumwidth plus1em \parfillskip-\@pnumwidth
    #5\leavevmode\hskip-\@tempdima{#6}\nobreak
    \leaders\hbox{$\m@th\mkern \@dotsep mu\hbox{.}\mkern \@dotsep mu$}\hfill
    \nobreak
    \hbox to\@pnumwidth{\@tocpagenum{#7}}\par
    \nobreak
    \endgroup
  \fi}
\def\l@subsection{\@tocline{2}{0pt}{2.5pc}{5pc}{}}
\begin{document}
\bibliographystyle{plain}

\title[Well-posedness for quasilinear NLS]{Local well-posedness for the quasi-linear Hamiltonian Schr\"odinger equation on tori}

\date{}

\author{Roberto Feola}
\address{\scriptsize{Dipartimento di Matematica, Universit\`a degli Studi di Milano, Via Saldini 50, I-20133}}
\email{roberto.feola@unimi.it}


\author{Felice Iandoli}
\address{\scriptsize{Laboratoire Jacques Louis Lions (Sorbonne Universit\'e), 5 Place Jussieu, Paris 75005.  }}
\email{felice.iandoli@sorbonne-universite.fr}



\thanks{Felice Iandoli has been supported  by ERC grant ANADEL 757996.
Roberto Feola has been supported by 
 the Centre Henri Lebesgue ANR-11-LABX- 0020-01 
 and by ANR-15-CE40-0001-02 ``BEKAM'' of the ANR}

\begin{abstract}  
We prove a local in time well-posedness result for quasi-linear Hamiltonian Schr\"odinger equations on $\T^d$ for any $d\geq 1$. For any initial condition in the Sobolev space $H^s$, with $s$ large, we prove the existence and uniqueness of classical solutions of the Cauchy problem associated to the equation. The lifespan of such a solution depends only on the size of the initial datum.  Moreover we prove the continuity of the solution map. 
\end{abstract}  

 \keywords{{\bf quasi-linear Schr\"odinger, Hamiltonian, para-differential calculus, energy estimates, well-posedness}}

\maketitle
\tableofcontents

\section{Introduction}
In this paper we  study the local in time solvability of the Cauchy problem associated to the following quasi-linear  perturbation of the Schr\"odinger equation
\begin{equation}\label{QLNLS}
\ii u_{t}-\Delta u+P(u)=0 \,,\qquad 
u=u(t,x)\,, 
\qquad x=(x_1,\ldots,x_{d})\in \mathbb{T}^{d}:=(\mathbb{R}/2\pi \mathbb{Z})^{d}
 \end{equation}
 with
   \begin{equation}\label{NLSnon}
   \begin{aligned}
P(u)
&:=(\pa_{\bar{u}}F)(u,\nabla u)-\sum_{j=1}^{d}\pa_{x_j}\big(\pa_{\bar{u}_{x_{j}}}F\big)(u,\nabla u)\,,
\end{aligned}
 \end{equation}
where we denoted $\pa_{u}:=(\pa_{{\rm Re}(u)}-\ii \pa_{{\rm Im}(u)})/2$ and  
$\pa_{\bar{u}}:=(\pa_{{\rm Re}(u)}+\ii \pa_{{\rm Im}(u)})/2$
the Wirtinger derivatives. The function
$F(y_0,y_1,\ldots,y_{d})$ is in $ C^{\infty}(\mathbb{C}^{d+1},\mathbb{R})$
   in the \emph{real} sense, i.e. $F$ is $C^{\infty}$
  as function of ${\rm Re}(y_i)$, ${\rm Im}(y_i)$. Moreover 
  we assume that
  $F$ has a zero of order at least $3$ at the origin, so that 
   $P$ has a zero of order at least $2$ at the origin.
 Here $
 \nabla u=(\pa_{x_{1}} u, \ldots, \pa_{x_d}u)
 $ is the gradient 
 and $\Delta$ denotes the Laplacian operator defined by linearity as
 \[
 \Delta e^{\ii j \cdot x}=-|j|^{2}e^{\ii j\cdot x}\,, \quad \forall\, j\in \mathbb{Z}^{d}\,.
 \]
 Notice that equation \eqref{QLNLS} is \emph{Hamiltonian}, i.e.
 \begin{equation}\label{HamNLS}
 u_{t}=\ii \nabla_{\bar{u}}H(u,\bar{u})\,,
\quad 
H(u,\bar{u}):=\int_{\mathbb{T}^{d}}|\nabla u|^{2}+F(u,\nabla u) dx\,,
\end{equation}
where $\nabla_{\bar{u}}:=(\nabla_{{\rm Re}(u)}-\ii\nabla_{{\rm Im}(u)})/2$
and $\nabla_{{\rm Re}(u)}$, $\nabla_{{\rm Im}(u)}$ denote the $L^{2}$-gradient. 
In order to be able to consider initial data with big size we assume  that the function $F$, defining the nonlinearity, satisfies 
following \emph{ellipticity condition}.
\begin{hypo}{\bf (Global ellipticity).}\label{hyp1}
We assume that there exist constants $\mathtt{c}_1,\mathtt{c}_2>0$
such that the following holds.
For any $\x=(\x_1,\ldots,\x_{d})\in \mathbb{R}^{d}$, 
$y=(y_0,\ldots,y_d)\in \mathbb{C}^{d+1}$
one has
\begin{equation}\label{ellipthypo1}
\sum_{j,k=1}^{d}\x_{j}\x_{k}\Big(\delta_{jk}+\pa_{y_j}\pa_{\ov{y_{k}}}F(y)\Big) 
\geq \mathtt{c}_1|\x|^{2}\,,
\end{equation}
\begin{equation}\label{ellipthypo2}
\Big(1+|\x|^{-2}\sum_{j,k=1}^{d}\x_{j}\x_{k}\pa_{y_j}
\pa_{\ov{y_{k}}}F(y)\Big)^{2}
-\Big| |\x|^{-2}\sum_{j,k=1}^{d}\x_{j}\x_{k}\pa_{\ov{y_j}}\pa_{\ov{y_{k}}}F(y) 
\Big|^{2}
\geq \mathtt{c}_2\,,
\end{equation}
where $\delta_{jj}=1$, $\delta_{jk}=0$ for $j\neq k$.
\end{hypo}
The main result of this paper is the following.
\begin{theorem}{\bf (Local well-posedness).}\label{main}
Let $F$ be a function satisfying the Hypothesis \ref{hyp1}. 
For any $s> 2d+11$ the following holds true.
Consider the equation \eqref{QLNLS}
with initial condition $u(0,x)=u_0(x)$ in $H^s(\mathbb{T}^d;\mathbb{C})$, 
then there exists a time 
$0<T=T(\|u_0\|_{H^{s}})$ 
and a unique solution 
\[
u(t,x)\in C^0([0,T),H^s(\mathbb{T}^d;\mathbb{C}))
\cap C^1([0,T),H^{s-2}(\mathbb{T}^d;\mathbb{C})).
\]
Moreover the solution map $u_0(x)\mapsto u(t,x)$ is continuous with respect to the $H^s$ topology for any $t$ in $[0,T)$.
\end{theorem}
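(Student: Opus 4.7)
The plan is to combine Bony's paradifferential calculus, Hamiltonian symmetrization, and a parabolic regularization scheme to establish a priori estimates, existence, uniqueness, and continuity of the flow. The overall strategy follows the paradigm developed for quasilinear dispersive/Hamiltonian equations: reduce the quasilinear equation to a paradifferential system whose principal symbol is essentially skew-adjoint modulo lower order, so that energy estimates close in $H^s$.

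First, I would paralinearize the nonlinearity $P(u)$. Writing $U=(u,\bar u)^T$ and using the Bony-Weyl quantization $\opbw$, the equation \eqref{QLNLS} takes the form
\begin{equation*}
\pa_t U = \ii E \,\opbw\!\big(A_2(U;x,\x) + A_1(U;x,\x)\big) U + R(U) U,
\end{equation*}
where $E=\mathrm{diag}(1,-1)$, $A_2$ is a $2\times 2$ matrix of symbols of order $2$ (with principal part $|\x|^2 \mathds{1}$ plus quadratic contributions from $F$ encoded by the Wirtinger second derivatives of $F$), $A_1$ collects order-$1$ contributions, and $R(U)$ is a smoothing remainder acting on $H^s$ in a tame way. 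Hypothesis \ref{hyp1} guarantees that the principal symbol $|\x|^2\mathds{1}+[\text{quadratic in }F]$ is positive/elliptic in the appropriate $2\times 2$ sense; the Hamiltonian structure \eqref{HamNLS} guarantees that $A_2$ is self-adjoint and the off-diagonal block of $A_2$ is symmetric (so that $\ii E A_2$ is formally skew-adjoint).

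Second, I would symmetrize: construct a bounded invertible paradifferential change of variables $V=\opbw(M(U;x,\x)) U$ conjugating the system to one whose principal symbol is a real scalar function times $\ii E$, i.e.
\begin{equation*}
\pa_t V = \ii E\, \opbw\big(\lambda(U;x,\x)\mathds{1}\big) V + \opbw(B_0(U;x,\x)) V + \wtilde R(U) V,
\end{equation*}
with $\lambda$ real of order $2$ and $B_0$ of order $0$. The ellipticity assumptions \eqref{ellipthypo1}--\eqref{ellipthypo2} are exactly what is needed to diagonalize $A_2$ and extract a real square root. From this reduction, an energy estimate of the form
\begin{equation*}
\frac{d}{dt}\|V\|_{H^s}^2 \leq C(\|U\|_{H^s})\,\|V\|_{H^s}^2
\end{equation*}
follows from the skew-adjointness of $\ii E\,\opbw(\lambda\mathds{1})$ up to order $0$, tame symbolic calculus, and the fact that $M(U;\cdot,\cdot)$ is bounded invertible on $H^s$ with tame norms. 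Undoing the transformation yields an a priori bound for $\|U\|_{H^s}$ on a time interval $[0,T]$ with $T=T(\|u_0\|_{H^s})$.

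Third, to build solutions I would set up a regularized approximation scheme (parabolic regularization, adding $-\e\Delta^{2} u$, or truncating the paradifferential symbols with a mollifier $\Pi_n$). Each regularized problem is globally well-posed by semigroup methods; the a priori estimates from step two give uniform-in-$\e$ (or in $n$) bounds on a common interval $[0,T]$. I would then prove the sequence is Cauchy in a weaker space, say $H^{s-2}$, by estimating the difference $U_n-U_m$ using the same paradifferential machinery (the equation for the difference is essentially linear with coefficients built from $U_n$ and $U_m$, so a symmetrization analogous to step two produces $\|U_n-U_m\|_{H^{s-2}} \leq C\|U_n(0)-U_m(0)\|_{H^{s-2}}$), and extract a limit that by interpolation lies in $C^0([0,T);H^s)\cap C^1([0,T);H^{s-2})$. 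The same difference estimate gives uniqueness. Continuity of the flow is then obtained by a Bona--Smith argument: approximate $u_0$ in $H^s$ by smoother data $u_0^\de$, use the difference estimate in $H^{s-2}$ plus the uniform $H^s$ bound, and conclude by interpolation.

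The main obstacle I expect is step two, the symmetrization: one has to design $M(U;x,\x)$ using only the symbolic information in $A_2, A_1$ so that both the principal symbol becomes scalar real \emph{and} the order-$1$ subprincipal symbol (produced by the Poisson brackets arising from $\opbw(M)^{-1}\pa_t\opbw(M)$ and symbolic compositions) has a vanishing self-adjoint part. This relies decisively on Hypothesis \ref{hyp1} to extract square roots and inverses smoothly in $(x,\x,U)$, and on the Hamiltonian structure \eqref{HamNLS} to ensure the necessary symmetries of $A_2$ and $A_1$ (so that the construction is not obstructed at the subprincipal level). Keeping all symbolic remainders in a tame paradifferential class — which dictates the threshold $s>2d+11$ — is the delicate bookkeeping piece of the argument.
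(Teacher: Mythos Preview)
Your outline parallels the paper's architecture closely: paralinearization into a $2\times 2$ system (Proposition \ref{NLSparapara}), block-diagonalization of the order-$2$ and order-$1$ symbols (Propositions \ref{prop:block}, \ref{prop:blockord1}), and the energy estimate (Theorem \ref{StimeEnergia}). The main difference is the existence mechanism. You propose parabolic regularization or symbol mollification applied directly to the nonlinear equation; the paper instead runs a Kato-type iterative scheme, solving a sequence of \emph{linear} paradifferential problems $\mathcal{P}_n$ with coefficients frozen at $U_{n-1}$ and proving uniform $H^s$ bounds plus contraction in $H^{s-2}$ (Lemma \ref{lemma-iterativo}). Each linear problem is itself solved via a frequency cut-off on the symbol (Lemma \ref{esistenza-gen}), which is the mollification you allude to but pushed inside the iteration rather than applied to the full nonlinear equation. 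Your route should also work, though with parabolic regularization the forcing $(\e-\e')\Delta^2 u_{\e'}$ in the difference equation costs derivatives not uniformly controlled, so some extra bookkeeping is needed there; the Kato scheme sidesteps this. One minor correction: after diagonalization the paper retains a \emph{real} order-$1$ diagonal symbol $a_1^{(1)}$, not just an order-$0$ remainder; since $T_{a_1^{(1)}}$ is self-adjoint, $\ii T_{a_1^{(1)}}$ is harmless in the energy.

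There is a small gap in your passage to $C^0([0,T);H^s)$: interpolation between uniform $H^s$ boundedness and $H^{s-2}$ convergence yields continuity only in $H^{s'}$ for $s'<s$, not in $H^s$ itself. The paper closes this by running a Bona--Smith argument on the solution (smoothing the initial datum and proving the approximants converge strongly in $H^s$), not only for the continuity of the flow map; you should invoke it at that earlier stage as well.
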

In the following  we make some comments about the result we obtained.
\begin{itemize}
\item In the case of small initial conditions, i.e. $\|u_0\|_{H^s}\ll 1$, 
one can disregard the global ellipticity Hypothesis \ref{hyp1}. 
Indeed for $``u$ small" the nonlinearity $F$ is always 
\emph{locally elliptic} and one can prove the theorem in a similar way.

\item When the initial condition satisfies $\|u_0\|_{H^s}\sim\e\ll 1$
it turns out that the life-span, implicit in this work, is $T\sim \e^{-1}$.
We do not know if the solutions are globally in time defined or not.
There are positive results when equation \eqref{QLNLS} is posed on $\mathbb{R}^{d}$.
We refer, for instance, to  \cite{saut-glob} by De Bouard-Hayashi-Saut and 
 \cite{saut-scatt} by De Bouard-Hayashi-Naumkin-Saut.
 However in the latter papers the results are based on dispersive estimates
 which are not available on compact manifolds.
 Our result sets the stage 
for a refined normal form analysis to improve the time of existence,
 assuming certain non-resonance conditions.
This is the content of the recent paper \cite{FGI20}.

\item Theorem \ref{main} provides the well-posedness 
for the most general quasi-linear,  Hamiltonian  (local) nonlinearity.
An example of physically relevant 
(see  \cite{sergio,fedayn, goldman, hasse})
 quasi-linear Schr\"odinger is 
\[
\ii u_t-\Delta u
+\big[\Delta (h(|u|^2))\big]h'(|u|^2)u-|u|^2u=0\,,
\]
for some real-valued $C^{\infty}$ function $h$. 
This model has been studied by many mathematicians,   
we refer for instance to \cite{colinjj} by Colin-Jeanjean 
and reference therein.

\item We did not attempt to achieve the 
theorem in the best possible regularity $s$. 
We work in high regularity in order  to perform 
suitable changes of coordinates and having a 
symbolic calculus at a sufficient order, which 
requires smoothness of the functions of the phase space.
\item We prove the continuity of the solution map, 
we do not know if it is uniformly continuous or not. 
Unlike the semi-linear case (for which we refer to \cite{caze}), 
it is an hard problem to establish if the flow is more regular. 
These problems have been discussed in the paper \cite{MST} 
about Benjamin-Ono and related equations by 
Molinet-Saut-Tzvetkov. We also quote the 
survey article \cite{tzvetkov} by Tzvetkov.
\end{itemize}
To the best of our knowledge this theorem 
is the first of this kind on a compact manifold 
of dimension greater than or equal to $2$. 
For the same equation on the circle we 
quote our paper \cite{Feola-Iandoli-Loc} and 
the one by Baldi-Haus-Montalto \cite{BHM}. 
In \cite{BHM} a Nash-Moser iterative scheme has 
been used in order to obtain the existence of 
solutions in the case of \emph{small} initial conditions. 
In our previous paper  \cite{Feola-Iandoli-Loc} we exploited 
the fact that in dimension one it is possible to 
conjugate the equation to  constant coefficients 
by means of para-differential changes of coordinates. 
This technique has been used in several other papers 
to study the normal forms associated to these quasi-linear 
equations we quote for instance our papers 
\cite{Feola-Iandoli-Long, Feola-Iandoli-Totale}, 
and the earlier one by Berti-Delort \cite{BD} on the 
gravity-capillary water waves system. 
The proof we provide here is not based on 
this ``reduction to constant coefficients" method 
which is peculiar of $1$-dimensional problems. 
Furthermore we think that this proof, apart from being 
more general, is also simpler than 
the one given in \cite{Feola-Iandoli-Loc}.

The literature in the Euclidean space $\mathbb{R}^d$ is wider. 
After the $1$-dimensional result by Poppenberg \cite{Pop1}, 
there have been the pioneering works by 
Kenig-Ponce-Vega \cite{KPV3, KPV2, KPV} in any dimension. 
More recently these results have been improved, 
in terms of regularity of the initial data, 
by Marzuola-Metcalfe-Tataru in \cite{MMT,MMT2,MMT3}. 
We mention also that Chemin-Salort proved in \cite{CS} 
a very low regularity well-posedness for a  particular 
quasi-linear Schr\"odinger equation in $3$ dimensions 
coupled with an elliptic problem.

We make some short comments on the hypotheses we made on the equation. 
As already pointed out, the equation \eqref{QLNLS} is  Hamiltonian. 
This is quite natural to assume when working on compact manifolds. 
On the Euclidean space one could make some milder 
assumptions because one could  use  the smoothing 
properties of the linear flow (proved by Constantin-Saut in \cite{saut}) 
to somewhat compensate the loss of derivatives introduced 
by the non Hamiltonian terms. 
These smoothing properties are not available 
on compact manifolds. Actually there are very 
interesting examples given by Christ in \cite{Cris} of non 
Hamiltonian equations which are ill-posed on the 
circle $\mathbb{S}^1$ and well-posed on $\R$. 
Strictly speaking the Hamiltonian structure is 
not really fundamental  for our method.
For instance we could consider the not necessarily Hamiltonian 
nonlinearity 
\[
P(u)=g(u)\Delta u+\ii f(u)\cdot \nabla u+h(u)\,,
\]
where $g: \mathbb{C}\to \mathbb{R}$, $f: \mathbb{C}\to \mathbb{R}^d$, 
$h : \mathbb{C}\to \mathbb{C}$ are smooth functions with a zero of order at least $2$. 
Our method would cover also this case.
 We did not insist on this fact because the equation 
 above is \emph{morally} Hamiltonian at the positive orders, 
 in the sense that $f$ and $g$ are not linked, 
 as in an Hamiltonian equation, 
 but they enjoy the same \emph{reality} properties 
 of an Hamiltonian equation. \\
\noindent The Hypothesis \ref{hyp1} is needed 
in order to cover the case of large initial conditions, 
this is compatible with the \emph{global ellipticity} 
condition we assumed in \cite{Feola-Iandoli-Loc} and 
with the ones given in \cite{KPV2,MMT3}. As already said, this hypothesis is 
not necessary in the case of small data.

We discuss briefly the strategy of our proof. 
We begin by performing  a para-linearization of 
the equation \emph{à la Bony} \cite{bony} 
with respect to the variables $(u,\bar{u})$.
 Then, in the 
same spirit of \cite{Feola-Iandoli-Loc},
we construct  the solutions of our problem by means of a 
quasi-linear iterative scheme \emph{\`a la Kato} \cite{Kato-loc}. 
More precisely, starting from the para-linearized system, 
we build a sequence of linear problems which 
converges to a solution of the para-linearized system 
and hence to a solution of the original equation \eqref{QLNLS}. 
 At each step of the iteration one needs to 
solve a linear para-differential system, in the variable $(u,\bar{u})$, 
with non constant coefficients 
(see for instance \eqref{QLNLSV2020}).
This strategy is classical, one can find it for instance in the book by Metivier \cite{Metivier}
for semilinear Schr\"odinger equations. Being our equation quasi-linear, we need some extra steps
(w.r.t. \cite{Metivier})
in order to
prove the existence of the solutions of the linear paradifferential system. 
We provide \emph{a priori} energy estimates (see Theorem \ref{StimeEnergia}).
In order to do this, we decouple the equations for  $(u,\bar{u})$ up to semilinear terms.
This is done by applying changes of coordinates generated by 
para-differential operators. Once achieved such a diagonalization we are able to prove energy estimates in an \emph{energy-norm}, which is equivalent to the Sobolev one.
In constructing such energy norm, we need to introduce a 
microlocal cut-off (see \eqref{cutofftheta}), 
relating the amplitude of the initial condition
and the 
frequency of the solution, which allows us to obtain the result without
any smallness assumption.

The paper is organized as follows. In Section \ref{sec:2} 
we give a short and self-contained introduction to 
the para-differential calculus that is needed in the rest of the paper. 
In Section \ref{sec:3} we perform the 
para-linearization of the equation. 
In Section \ref{sec:4} we give an \emph{a priori} 
energy estimate on the linearized equation by 
performing suitable changes of coordinates. 
In Section \ref{sec:5} we give the proof of Theorem \ref{main}. 
\bigskip

\section{Functional setting}\label{sec:2}
We denote by $H^{s}(\mathbb{T}^d;\mathbb{C})$
(respectively $H^{s}(\mathbb{T}^d;\mathbb{C}^{2})$)
the usual Sobolev space of functions $\mathbb{T}^{d}\ni x \mapsto u(x)\in \mathbb{C}$
(resp. $\C^{2}$).
We expand a function $ u(x) $, $x\in \mathbb{T}^{d}$, 
 in Fourier series as 
\be\label{complex-uU}
u(x) = \frac{1}{(2\pi)^{{d}/{2}}}
\sum_{n \in \Z^{d} } \hat{u}(n)e^{\ii n\cdot x } \, , \qquad 
\hat{u}(n) := \frac{1}{(2\pi)^{{d}/{2}}} \int_{\mathbb{T}^{d}} u(x) e^{-\ii n \cdot x } \, dx \, .
\ee
We also use the notation
\begin{equation}\label{notaFou}
u_n^+ := u_n := \hat{u}(n) \qquad  {\rm and} 
\qquad  u_n^- := \ov{u_n}  := \ov{\hat{u}(n)} \, . 
\end{equation}
We set $\langle j \rangle:=\sqrt{1+|j|^{2}}$ for $j\in \mathbb{Z}^{d}$.
We endow $H^{s}(\mathbb{T}^{d};\mathbb{C})$ with the norm 
\begin{equation}\label{Sobnorm}
\|u(\cdot)\|_{H^{s}}^{2}:=\sum_{j\in \mathbb{Z}^{d}}\langle j\rangle^{2s}|u_{j}|^{2}\,.
\end{equation}
For $U=(u_1,u_2)\in H^{s}(\mathbb{T}^d;\mathbb{C}^{2})$
we just set
$\|U\|_{H^{s}}=\|u_1\|_{H^{s}}+\|u_2\|_{H^{s}}$.
Moreover, for $r\in\R^{+}$, we 
denote by $B_{r}(H^{s}(\mathbb{T}^{d};\mathbb{C}))$
(resp. $B_{r}(H^{s}(\mathbb{T}^{d};\mathbb{C}^2))$)
the ball of $H^{s}(\mathbb{T}^{d};\mathbb{C})$ 
(resp. $H^{s}(\mathbb{T}^{d};\mathbb{C}^2)$)
with radius $r$
centered at the origin.
We shall also write the norm in \eqref{Sobnorm} as
\begin{equation}\label{Sobnorm2}
\|u\|^{2}_{H^{s}}=(\langle D\rangle^{s}u,\langle D\rangle^{s} u)_{L^{2}}\,, 
\qquad
\langle D\rangle e^{\ii j\cdot x}=\langle j\rangle  e^{\ii j\cdot x}\,,\;\;\; 
\forall \, j\in \mathbb{Z}^{d}\,,
\end{equation}
where $(\cdot,\cdot)_{L^{2}}$ denotes the standard complex $L^{2}$-scalar product
 \begin{equation}\label{scalarL}
 (u,v)_{L^{2}}:=\int_{\mathbb{T}^{d}}u\bar{v}dx\,, 
 \qquad \forall\, u,v\in L^{2}(\mathbb{T}^{d};\mathbb{C})\,.
 \end{equation}
  \noindent
{\bf Notation}. 
We shall 
use the notation $A\lesssim B$ to denote 
$A\le C B$ where $C$ is a positive constant
depending on  parameters fixed once for all, for instance $d$
 and $s$.
 We will emphasize by writing $\lesssim_{q}$
 when the constant $C$ depends on some other parameter $q$.

\subsection{Basic Para-differential calculus}\label{basicPara}
We introduce the   symbols we shall use in this paper.
We shall consider symbols 
$\mathbb{T}^{d}\times \mathbb{R}^{d}\ni (x,\x)\to a(x,\x)$
 in the spaces 
$\mathcal{N}_{s}^{m}$, $m,s\in \mathbb{R}$, $s\geq0$,
%
%
%
defined by the 
norms
\begin{equation}\label{normaSimbo}
|a|_{\mathcal{N}_{s}^{m}}:=\sup_{|\alpha|+|\beta|\leq s}\sup_{ \x\in \mathbb{R}^{d}}
\langle \x\rangle^{-m+|\beta|}\|\pa_{\x}^{\beta}\pa_{x}^{\alpha}a(x,\x)\|_{L^{\infty}}\,.
\end{equation}
The constant $m\in \mathbb{R}$ indicates the \emph{order} of the symbols, while
$s$ denotes its differentiability.
Let $0<\epsilon< 1/4$, consider
a smooth function $\chi : \mathbb{R}\to[0,1]$ satisfying 
\begin{equation}\label{cutofffunct}
\chi(\x)=\left\{
\begin{aligned}
&1 \quad {\rm if} |\x|\leq 5/4 
\\
&0 \quad {\rm if} |\x|\geq 8/5 
\end{aligned}\right.
\end{equation}
and define
\begin{equation}\label{cutofffunctepsilon}
\chi_{\epsilon}(\x):=\chi(|\x|/\epsilon)\,.
\end{equation}

 \noindent
 For a symbol $a(x,\x)$ in $\mathcal{N}_{s}^{m}$
we define its (Weyl)
quantization as 
\begin{equation}\label{quantiWeyl}
T_{a}h:=\frac{1}{(2\pi)^{d}}\sum_{j\in \mathbb{Z}^{d}}e^{\ii j\cdot x}
\sum_{k\in\mathbb{Z}^{d}}
\chi_{\epsilon}\Big(\frac{|j-k|}{\langle j+k\rangle}\Big)
\widehat{a}\big(j-k,\frac{j+k}{2}\big)\widehat{h}(k)
\end{equation}
where $\widehat{a}(\eta,\x)$ denotes the $\eta$-Fourier coefficient 
of $a(x,\x)$ in  the variable 
$x\in \mathbb{T}^{d}$.
\begin{remark}\label{positivity}
 The definition of the operator $T_a$ is independent of the choice of the  cut-off function $\chi_{\epsilon}$ 
 up to regularizing operators (satisfying  estimates of the form \eqref{diffQuanti}),  
 this will be one of the consequences of  Lemma \ref{azioneSimboo}.
\end{remark}
 
 \begin{remark}\label{strutAlg}
Let us consider a symbol $a(x,\x)$ of order $m$ and set $A:=T_{a}$. 
Then one can check the following:
\begin{align}
 \bar{A}[h]&:=\ov{A[\bar{h}]}\,,\quad  \Rightarrow\quad 
\bar{A}=T_{\tilde{a}}\,,\qquad \tilde{a}(x,\x)=\ov{a(x,-\x)}\,;\label{simboBarrato}\\
{\bf (Adjoint)} \;\;  (Ah , v)_{L^{2}}&=:(h,A^{*}v)_{L^{2}}\,, \quad \Rightarrow \quad 
A^{*}=T_{\ov{a}}\,.\label{simboAggiunto}
\end{align}
If the symbol $a$ is real valued then the operator $T_{a}$ is self-adjoint
with respect to the scalar product in \eqref{scalarL}.
\end{remark}

\vspace{0.3em}
\noindent
{\bf Notation.}
 Given a symbol $a(x,\x)$ we shall also write
 \begin{equation}\label{notaPara}
 T_{a}[\cdot]:=\opbw(a(x,\x))[\cdot]\,,
 \end{equation}
 to denote the associated para-differential operator.

We now recall some fundamental properties of para-differential operators.

\begin{lemma}\label{azioneSimboo}
The following holds.

\noindent
$(i)$ Let $m_1,m_2\in \mathbb{R}$, $s>d/2$, $s\in \mathbb{N}$ and $a\in\mathcal{N}^{m_1}_s$, 
$b\in \mathcal{N}^{m_2}_s$. One has
\begin{equation}\label{prodSimboli}
|ab|_{\mathcal{N}^{m_1+m_2}_s}+|\{a,b\}|_{\mathcal{N}_{s-1}^{m_1+m_2-1}}+
|\s(a,b)|_{\mathcal{N}_{s-2}^{m_1+m_2-2}}\lesssim
|a|_{\mathcal{N}_{s}^{m_1}}|b|_{\mathcal{N}_{s}^{m_2}}
\end{equation}
where
\begin{equation}\label{PoissonBra}
\{a,b\}:=\sum_{j=1}^{d}\Big((\pa_{\x_j}a)(\pa_{x_{j}}b)
-(\pa_{x_j}a)(\pa_{\x_{j}}b)\Big)\,,
\end{equation}
\begin{equation}\label{PoissonBra22}
\s(a,b):=\sum_{j,k=1}^{d}\Big((\pa_{\x_{j}\x_{k}}a)(\pa_{x_jx_{k}}b)
-2(\pa_{x_{j}\x_{k}}a)(\pa_{\x_j x_{k}}b)
+(\pa_{x_{j}x_{k}}a)(\pa_{\x_j\x_{k}}b)\Big)\,.
\end{equation}

\noindent
$(ii)$ Let $s_0>d$, $s_0\in \mathbb{N}$, $m\in \mathbb{R}$ 
and $a\in\mathcal{N}_{s_0}^{m}$. 
Then, for any $s\in \mathbb{R}$, one has
\begin{equation}\label{actionSob}
\|T_{a}h\|_{H^{s-m}}\lesssim|a|_{\mathcal{N}^{m}_{s_0}}\|h\|_{H^{s}}\,,
\qquad \forall h\in H^{s}(\mathbb{T}^{d};\mathbb{C})\,.
\end{equation}

\noindent
$(iii)$ Let $s_0>d$, $s_0\in \mathbb{N}$, $m\in \mathbb{R}$, $\rho\in\mathbb{N}$,  and 
$a\in\mathcal{N}_{s_0+\rho}^{m}$.
For $0<\epsilon_2\leq \epsilon_1<1/2$ and any 
$h\in H^{s}(\mathbb{T}^{d};\mathbb{C})$, 
we define
\begin{equation}\label{natale}
R_{a}h:=\frac{1}{(2\pi)^{d}}\sum_{j\in \mathbb{Z}^{d}}e^{\ii j\cdot x}
\sum_{k\in\mathbb{Z}^{d}}
\big(\chi_{\epsilon_1}-\chi_{\epsilon_2}\big)\Big(\frac{|j-k|}{\langle j+k\rangle}\Big)
\widehat{a}(j-k,\frac{j+k}{2})\widehat{h}(k)\,,
\end{equation}
where $\chi_{\epsilon_1}, \chi_{\epsilon_2}$ are as in \eqref{cutofffunctepsilon}. 
Then one has
 \begin{equation}\label{diffQuanti}
 \|R_a h\|_{H^{s+\rho-m}}\lesssim
 \|h\|_{H^{s}}|a|_{\mathcal{N}^{m}_{\rho+s_0}}\,,
 \qquad \forall h\in H^{s}(\mathbb{T}^{d};\mathbb{C})\,.
 \end{equation}
 
 \noindent
 $(iv)$ 
 Let $s_0>d$, $s_0\in \mathbb{N}$, $m\in \mathbb{R}$ and $a\in\mathcal{N}_{s_0}^{m}$. 
 For $\mathtt{R}>0$, consider 
 the cut-off function $\mathcal{X}_{\mathtt{R}}\in C^{\infty}(\mathbb{R}^{n};\mathbb{R})$
 defined as
 \begin{equation}\label{cutofftheta}
 \mathcal{X}_{\mathtt{R}}(\x):=1-\chi\Big(\frac{|\x|}{\mathtt{R}}\Big)\,,
 \end{equation}
 where $\chi$ is given in \eqref{cutofffunct}
 and define 
 the symbol 
 $a^{\perp}_{\mathtt{R}}(x,\x):=(1-\mathcal{X}_{\mathtt{R}}(\x))a(x,\x)$. 
 Then, for any $q\in \mathbb{R}$ such that $q+m\geq 0$, 
 one has
 \begin{equation}\label{azionecufoffoff}
 \|T_{a^{\perp}_{\mathtt{R}}} h\|_{H^{s+q}}
 \lesssim_{q,m} \mathtt{R}^{q+m} \|h\|_{H^{s}}|a|_{\mathcal{N}_{s_0}^{m}}\,,
 \qquad \forall\, h\in H^{s}(\mathbb{T}^{d};\mathbb{C})\,.
 \end{equation}
 \end{lemma}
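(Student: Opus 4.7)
The plan is to work throughout with the Fourier-series expression \eqref{quantiWeyl} and to exploit two structural facts. First, on the support of the paraproduct cut-off $\chi_\epsilon(|j-k|/\langle j+k\rangle)$ one has $|j-k|<\tfrac12\langle j+k\rangle$, so $\langle j\rangle$, $\langle k\rangle$ and $\langle(j+k)/2\rangle$ are pairwise comparable (the \emph{paraproduct property}). Second, integrating by parts in $x$ gives, for any integer $N\le s_0+\rho$,
\[
|\widehat a(\eta,\xi)|\lesssim \langle\eta\rangle^{-N}\,|a|_{\mathcal N^m_N}\,\langle\xi\rangle^m,
\]
so the kernel of $T_a$ is almost-diagonal in frequency, with off-diagonal decay controlled by the smoothness of $a$. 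Parts (ii)--(iv) will all reduce to Schur/Young tests in $\ell^2(\mathbb Z^d)$ built from these two ingredients.

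Part (i) is a direct application of the Leibniz rule to the defining norm \eqref{normaSimbo}: the orders of $ab$, $\{a,b\}$ and $\sigma(a,b)$ are accounted for by counting $\partial_{\xi_j}$ factors, and the loss of one (resp.\ two) units of differentiability in $\{a,b\}$ (resp.\ $\sigma(a,b)$) simply reflects the two extra derivatives already consumed inside the brackets. There is no analytic subtlety here.

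The core of the lemma is (ii). The $j$-th Fourier coefficient of $T_a h$ is a sum over $k$ of $\chi_\epsilon(|j-k|/\langle j+k\rangle)\,\widehat a(j-k,(j+k)/2)\,\widehat h(k)$. The paraproduct property immediately yields $\langle j\rangle^{s-m}\lesssim_s\langle k\rangle^{s-m}$ and $\langle(j+k)/2\rangle^{m}\lesssim\langle k\rangle^{m}$ on the cut-off support, while the Fourier decay estimate above at $N=s_0$ bounds the matrix entry by $|a|_{\mathcal N^m_{s_0}}\langle j-k\rangle^{-s_0}\langle k\rangle^m$; since $s_0>d$ makes $\langle j-k\rangle^{-s_0}$ summable, a Schur test in $\ell^2$ produces \eqref{actionSob}. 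Part (iii) is dual to this: on the support of $\chi_{\epsilon_1}-\chi_{\epsilon_2}$ one has the opposite inequality $\langle j\rangle,\langle k\rangle,\langle(j+k)/2\rangle\lesssim\langle j-k\rangle$, so the extra smoothness $a\in\mathcal N^m_{s_0+\rho}$ provides $\rho$ additional powers of decay in $\langle j-k\rangle$ from the Fourier estimate, which absorb the $\langle j\rangle^\rho$ weight required to reach $H^{s+\rho-m}$; a Schur test analogous to (ii) delivers \eqref{diffQuanti}.

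Finally, for (iv) the $\xi$-support of $a^\perp_{\mathtt R}$ lies in $|\xi|\le 8\mathtt R/5$, so $\widehat{T_{a^\perp_{\mathtt R}}h}(j)$ vanishes unless $|j+k|\lesssim\mathtt R$, which combined with the paraproduct property forces $|j|\lesssim\mathtt R$ on the effective support; since $q+m\ge 0$, the trivial bound $\langle j\rangle^{q+m}\lesssim\mathtt R^{q+m}$ converts the $H^{s+q}$-norm of $T_{a^\perp_{\mathtt R}}h$ into an $H^{s-m}$-norm up to a factor $\mathtt R^{q+m}$, and \eqref{azionecufoffoff} follows by applying (ii) to $a^\perp_{\mathtt R}$ (whose $\mathcal N^m_{s_0}$ norm is obviously dominated by $|a|_{\mathcal N^m_{s_0}}$). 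The step I expect to require the most care is the Schur-test estimate in (ii) with precise tracking of how the implicit constant grows with $|s-m|$, since this controls the loss picked up each time the paraproduct property is used to swap a weight $\langle j\rangle$ for $\langle k\rangle$; all the other assertions follow by the same scheme with cosmetic modifications.
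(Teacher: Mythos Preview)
Your proposal is correct and follows essentially the same route as the paper: the Fourier decay bound $|\hat a(\eta,\xi)|\lesssim\langle\eta\rangle^{-N}|a|_{\mathcal N^m_N}\langle\xi\rangle^m$, the frequency equivalence $\langle j\rangle\sim\langle k\rangle\sim\langle(j+k)/2\rangle$ on the cut-off support, and a convolution/Schur estimate are exactly the ingredients the paper uses (it phrases the last step via Young's inequality $\ell^1*\ell^2\subset\ell^2$ rather than the Schur test, which is equivalent here). The only cosmetic difference is in item~(iv): the paper says to argue ``as in item~(iii)'' using that $a^\perp_{\mathtt R}$ is supported in $|\xi|\lesssim\mathtt R$, whereas you reduce directly to item~(ii) after extracting the factor $\langle j\rangle^{q+m}\lesssim\mathtt R^{q+m}$ --- your version is in fact slightly cleaner.
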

 
 \begin{proof}
$(i)$
 For any $|\alpha|+|\beta|\leq s$ we have
 \[
 \pa_{x}^{\alpha}\pa_{\x}^{\beta}\Big(a(x,\x) b(x,\x)\Big)=
 \sum_{\substack{\alpha_1+\alpha_2=\alpha \\ 
\beta_1+\beta_2=\beta }}C_{\alpha,\beta}
( \pa_{x}^{\alpha_1}\pa_{\x}^{\beta_1}a)(x,\x)
( \pa_{x}^{\alpha_2}\pa_{\x}^{\beta_2}b)(x,\x)
 \]
 for some combinatoric coefficients $C_{\alpha,\beta}>0$.
 Then, recalling \eqref{normaSimbo},
 \[
 \|( \pa_{x}^{\alpha_1}\pa_{\x}^{\beta_1}a)(x,\x)
( \pa_{x}^{\alpha_2}\pa_{\x}^{\beta_2}b)(x,\x)\|_{L^{\infty}}
\lesssim_{\alpha,\beta} 
|a|_{\mathcal{N}_s^{m_1}}
|b|_{\mathcal{N}_s^{m_2}}\langle \x\rangle^{m_1+m_2-|\beta|}\,.
 \]
This implies the \eqref{prodSimboli} for the product $ab$. 
 The \eqref{prodSimboli}
for the symbols 
$\{a,b\}$ and $\s(a,b)$ follows similarly 
using \eqref{PoissonBra} and \eqref{PoissonBra22}.

\smallskip
\noindent $(ii)$ 
First of all notice that, since $a\in \mathcal{N}_{s_0}^{m}$, 
$s_0\in\mathbb{N}$, then
(recall \eqref{normaSimbo})
\[
\|a(\cdot,\x)\|_{H^{s_0}}\lesssim \langle\x\rangle^{m}|a|_{\mathcal{N}_{s_0}^{m}}\,,\;\;\forall\x\in \mathbb{Z}^{d}\,, 
\]
which implies 
\begin{equation}\label{virus5}
|\hat{a}(j,\x)|\lesssim \langle\x\rangle^{m}|a|_{\mathcal{N}_{s_0}^{m}}\langle j\rangle^{-s_0}\,,
\quad \forall\, j,\x\in \mathbb{Z}^{d}\,.
\end{equation}
Moreover, since $0<\epsilon<1/4$ we note that, for $\x,\eta\in \mathbb{Z}^d$,
\begin{equation}\label{equixieta}
\chi_{\epsilon}\left(\frac{|\x-\eta|}{\langle \x+\eta\rangle}\right)\neq0\quad \Rightarrow\quad
\left\{
\begin{aligned}
&(1-\tilde\epsilon)|\x|\leq (1+\tilde\epsilon)|\eta| \\
&(1-\tilde\epsilon)|\eta|\leq (1+\tilde\epsilon)|\x| \,,
\end{aligned}
\right.
\end{equation}
where $0<\tilde{\epsilon}<4/5$. Indeed, recalling \eqref{cutofffunct}-\eqref{cutofffunctepsilon},
we have $|\x|\leq (1+\tfrac{8}{5}\epsilon)|\eta|+\tfrac{8}{5}\epsilon|\x|+\tfrac{8}{5}\epsilon$
which, for $|\eta|\neq0$, implies
$(1-\tfrac{8}{5}\epsilon)|\x|\leq (1+\tfrac{16}{5}\epsilon)|\eta|$.
This implies the first condition in \eqref{equixieta} in the case $|\eta|\neq0$.
The case $|\eta|=0$ is trivial since the definition of the cut-off function $\chi_{\e}$ and $\e<1/4$
implies that $|\x|=0$ as well.
The second condition in \eqref{equixieta} is similar.
As a consequence we have
 $\langle\x+\eta\rangle\sim\langle\x\rangle $: on one hand
 $|\x+\eta|\leq |\x|+|\eta|\leq (1+C)|\x|$ for some $C=C(\tilde{\epsilon})>0$; 
 on the other hand $|\x|=\tfrac{1}{2}|\x-\eta+\x+\eta|\leq \tfrac{1}{2}|\x-\eta|
 +\tfrac{1}{2}|\x+\eta|\leq \tfrac{1}{2}\tfrac{8\epsilon}{5}\langle\x+\eta\rangle+\tfrac{1}{2}\langle\x+\eta\rangle$.
Therefore, taking $s_0>d$,
\begin{equation}\label{natale2}
\begin{split}
\|T_{a}h\|^{2}_{H^{s-m}}&
\stackrel{\mathclap{\eqref{Sobnorm}}}{\lesssim}
\sum_{\x\in\mathbb{Z}^{d}}
\langle\x\rangle^{2(s-m)}\Big|\sum_{\eta\in \mathbb{Z}^{d}}
\chi_{\epsilon}\left(\frac{|\x-\eta|}{\langle \x+\eta\rangle}\right)\hat{a}(\x-\eta, \frac{\x+\eta}{2})\hat{h}(\eta)\Big|^{2}\\
&\stackrel{\mathclap{\eqref{virus5}, \eqref{equixieta}}}{\lesssim}\,\,\,\,\,\,
\sum_{\x\in \mathbb{Z}^{d}}\langle\x\rangle^{-2m}\Big(\sum_{\eta\in \mathbb{Z}^{d}}
\frac{\langle\x\rangle^{m}}{\langle\x-\eta\rangle^{s_0}}
|\hat{h}(\eta)|\langle \eta\rangle^{s}
\Big)^{2} |a|_{\mathcal{N}_{s_0}^{m}}^{2}\\
&\lesssim|a|^{2}_{\mathcal{N}^{m}_{s_0}} \sum_{\xi\in\mathbb{Z}^d}\Big(\sum_{\eta\in\mathbb{Z}^d}|\hat{h}(\eta)\langle\eta\rangle^s\frac{1}{\langle\xi-\eta\rangle^{s_0}}|\Big)^2\\
&
\lesssim|a|^{2}_{\mathcal{N}^{m}_{s_0}}\|\hat{h}(\xi)\langle\xi\rangle^s\star{\langle\xi\rangle}^{-s_0}\|_{\ell^2(\mathbb{Z}^d)}^2\leq |a|^{2}_{\mathcal{N}^{m}_{s_0}}\|\hat{h}(\xi)\langle\xi\rangle^s\|_{\ell^2(\mathbb{Z}^d)}^2\|\langle{\xi}\rangle^{-s_0}\|_{\ell^1(\mathbb{Z}^d)}^2\\
&\lesssim
\|h\|_{H^{s}}^{2}|a|^{2}_{\mathcal{N}^{m}_{s_0}}\,,
\end{split}
\end{equation}
where we denoted by $\star$ the convolution between sequences, in the penultimate passage we used the Young inequality for sequences and in the last one that $\langle\xi\rangle^{-s_0}$ is in $\ell^1(\Z^d)$ since $s_0>d$.

\smallskip
\noindent
$(iii)$ Notice that the set of $\x,\eta$ such that
$(\chi_{\epsilon_1}-\chi_{\epsilon_2})(|\x-\eta|/\langle\x+\eta\rangle)=0$
contains the set  such that
\[
|\x-\eta|\geq \frac{8}{5}\epsilon_1 \langle \x+\eta\rangle\quad {\rm or}\quad
|\x-\eta|\leq \frac{5}{4}\epsilon_2 \langle\x+\eta\rangle\,.
\]
Therefore $(\chi_{\epsilon_1}-\chi_{\epsilon_2})(|\x-\eta|/\langle\x+\eta\rangle)\neq0$
implies
\begin{equation}\label{condizio}
\frac{5}{4}\epsilon_2 \langle\x+\eta\rangle\leq |\x-\eta|\leq \frac{8}{5}\epsilon_1 \langle\x+\eta\rangle\,.
\end{equation}
For $\x\in \mathbb{Z}^{d}$ 
we denote $\mathcal{A}(\x)$ the set of 
$\eta\in \mathbb{Z}^{d}$ such that the \eqref{condizio}
holds. Moreover (reasoning as in \eqref{virus5}), since
$a\in \mathcal{N}_{s_0+\rho}^{m}$, we have that
\begin{equation}\label{virus6}
|\hat{a}(j,\x)|\lesssim \langle\x\rangle^{m}|a|_{\mathcal{N}_{s_0+\rho}^{m}}
\langle j\rangle^{-s_0-\rho}\,,
\quad \forall\, j,\x\in \mathbb{Z}^{d}\,.
\end{equation}
To estimate the remainder in \eqref{natale}
we reason as in \eqref{natale2}.
By \eqref{condizio} and setting $\rho=s-s_0$ we have
\begin{equation}\label{stimarestoresto}
\begin{aligned}
\|R_ah\|_{H^{s+\rho-m}}^{2}
&\stackrel{\mathclap{\eqref{Sobnorm}}}{\lesssim}\sum_{\x\in\mathbb{Z}^{d}}
\langle\x\rangle^{2(s+\rho-m)}\Big| (\chi_{\epsilon_1}-\chi_{\epsilon_2})\left(\frac{|\x-\eta|}{\langle\x+\eta\rangle}\right)\hat{a}(\x-\eta, \frac{\x+\eta}{2})\hat{h}(\eta)\Big|^{2}\\
&\stackrel{\mathclap{\eqref{virus6}}}{\lesssim}
\sum_{\x\in \mathbb{Z}^{d}}
\langle \x\rangle^{-2m}
\Big(\sum_{\eta\in \mathcal{A}(\x)}
\frac{\langle\x-\eta\rangle^{\rho}\langle\x+\eta\rangle^{m}}{\langle\x-\eta\rangle^{\rho+s_0}}
|\hat{h}(\eta)|\langle \eta\rangle^{s}
\Big)^{2}|a|_{\mathcal{N}^{m}_{s_0+\rho}}^{2}\\
&\lesssim \|\hat{h}(\xi)\langle\xi\rangle^s\star\langle\xi\rangle^{-s_0}\|^2_{\ell^2(\mathbb{Z}^d)}|a|^{2}_{\mathcal{N}^{m}_{\rho+s_0}}\lesssim \|\hat{h}(\xi)\langle\xi\rangle^s\|^2_{\ell^2(\mathbb{Z}^d)} \|\langle\xi\rangle^{-s_0}\|^2_{\ell^1(\mathbb{Z}^d)}|a|^{2}_{\mathcal{N}^{m}_{\rho+s_0}}\\
&\lesssim
\|h\|_{H^{s}}^{2}|a|^{2}_{\mathcal{N}^{m}_{\rho+s_0}}\,,
\end{aligned}
\end{equation}
where we have denoted by $\star$ the convolution between sequences, in the penultimate step we used Young inequality for sequences, in the last one we used that $\langle\xi\rangle^{-s_0}$ is in $\ell^1(\Z^d)$ since $s_0>d$.

\noindent
 $(iv)$ This item follows by reasoning exactly  as in the proof of  item $(iii)$
and recalling that, by the definition of $\mathcal{X}_{\mathtt{R}}$ in \eqref{cutofftheta},
one has that $a^{\perp}_{\mathtt{R}}(x,\x)\equiv0$ for any $|\x|> 3\mathtt{R}$.
 \end{proof}
 \begin{remark}\label{rmk:regola}
 The estimate \eqref{actionSob} is not optimal. 
 By following the more sophisticated proof by 
 Metivier in \cite{Metivier} one could obtain  the 
 better bound with $|a|_{\mathcal{N}_0^m}$  
 instead of $|a|_{\mathcal{N}_{s_0}^m}$ 
 on the right hand side. 
 \end{remark}
 
 \begin{proposition}{\bf (Composition).}\label{prop:compo}
Fix $s_0>d$, $s_0\in \mathbb{N}$, and $m_1,m_2\in \mathbb{R}$. Then the following holds.

\noindent
$(i)$ For 
 $a\in \mathcal{N}_{s_0+4}^{m_1}$ and $b\in\mathcal{N}_{s_0+4}^{m_2}$
we have (recall \eqref{PoissonBra}, \eqref{PoissonBra22})
\begin{equation}\label{composit}
T_{a}\circ T_{b}=T_{ab}+\frac{1}{2\ii }T_{\{a,b\}}-\frac{1}{8}T_{\s(a,b)}+R(a,b)\,,
\end{equation}
where $R(a,b)$ is a remainder  satisfying, for any $s\in \mathbb{R}$,
\begin{equation}\label{composit2}
\|R(a,b)h\|_{H^{s-m_1-m_2+3}}\lesssim\|h\|_{H^{s}}|a|_{\mathcal{N}^{m_1}_{s_0+4}}
|b|_{\mathcal{N}^{m_2}_{s_0+4}}\,.
\end{equation}
Moreover, if $a,b\in H^{\rho+s_0}(\mathbb{T}^{d};\mathbb{C})$ are functions 
(independent of $\x\in \mathbb{R}^{n}$)
then, $\forall s\in\mathbb{R}$,
\begin{equation}\label{composit3}
\|(T_aT_b-T_{ab})h\|_{H^{s+\rho}}\lesssim\|h\|_{H^{s}}\|a\|_{H^{\rho+s_0}}
\|b\|_{H^{\rho+s_0}}\,.
\end{equation}

\noindent
$(ii)$ Let $a,b$ as in item $(i)$ and, for $\mathtt{R}>0$,  define
$a_{\mathtt{R}}(x,\x):=\mathcal{X}_{\mathtt{R}}(\x)a(x,\x)$, 
$b_{\mathtt{R}}(x,\x):=\mathcal{X}_{\mathtt{R}}(\x)b(x,\x)$
where $\mathcal{X}_{\mathtt{R}}(\x)$ is defined in \eqref{cutofftheta}. 
Assume that $m_1+m_2-2\leq 0$.
Then
\begin{equation}\label{compositTheta}
T_{a_\mathtt{R}}\circ T_{b_\mathtt{R}}=T_{a_{\mathtt{R}}b_{\mathtt{R}}}
+\frac{1}{2\ii }T_{\{a_{\mathtt{R}},b_{\mathtt{R}}\}}
-\frac{1}{8}T_{\s(a_\mathtt{R},b_\mathtt{R})}
+R(a_{\mathtt{R}},b_{\mathtt{R}})\,,
\end{equation}
where $R(a_{\mathtt{R}},b_{\mathtt{R}})$ is a remainder satisfying
\begin{equation}\label{composit2Theta}
\|R(a_{\mathtt{R}},b_{\mathtt{R}})h\|_{H^{s-m_1-m_2+2}}
\lesssim \mathtt{R}^{-1}
\|h\|_{H^{s}}|a|_{\mathcal{N}^{m_1}_{s_0+4}}
|b|_{\mathcal{N}^{m_2}_{s_0+4}}\,.
\end{equation}
\end{proposition}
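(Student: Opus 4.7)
The plan is to expand the Weyl symbol of the composition $T_a \circ T_b$ by a Taylor expansion in $\xi$ around the symmetric midpoint and then to identify the first three terms of the expansion with the para-products on the right-hand side of \eqref{composit}. Substituting \eqref{quantiWeyl} twice I would write
\begin{equation*}
T_a T_b\, h(x) = \frac{1}{(2\pi)^{2d}}\sum_{j,k,\ell\in\mathbb{Z}^d} e^{\ii j\cdot x}\,\chi_\epsilon\!\Bigl(\tfrac{|j-\ell|}{\langle j+\ell\rangle}\Bigr)\chi_\epsilon\!\Bigl(\tfrac{|\ell-k|}{\langle \ell+k\rangle}\Bigr)\,\hat a\!\Bigl(j-\ell,\tfrac{j+\ell}{2}\Bigr)\,\hat b\!\Bigl(\ell-k,\tfrac{\ell+k}{2}\Bigr)\,\hat h(k),
\end{equation*}
set $\alpha=j-\ell$, $\beta=\ell-k$ and $\xi:=\tfrac{j+k}{2}$, so that $\tfrac{j+\ell}{2}=\xi+\beta/2$ and $\tfrac{\ell+k}{2}=\xi-\alpha/2$. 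This symmetric organisation is exactly the Weyl one, which is what ultimately yields the Poisson bracket $\{a,b\}$ and the symmetric symbol $\sigma(a,b)$ with the announced numerical coefficients $\tfrac{1}{2\ii}$ and $-\tfrac{1}{8}$.

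Next I would Taylor-expand $\hat a(\alpha,\xi+\beta/2)$ and $\hat b(\beta,\xi-\alpha/2)$ in the second variable around $\xi$ to second order, keeping the third-order integral Taylor remainder. The zeroth-order piece, after replacing the product $\chi_\epsilon\cdot\chi_\epsilon$ by the single cut-off $\chi_\epsilon(|j-k|/\langle j+k\rangle)$ appearing in the definition of $T_{ab}$ (the difference being a smoothing operator handled by Lemma \ref{azioneSimboo}$(iii)$), produces $T_{ab}$. The first-order piece gives the two contributions $\tfrac{\beta}{2}\,\partial_\xi a\cdot b$ and $-\tfrac{\alpha}{2}\,\partial_\xi b\cdot a$; since the factors $\ii\alpha$ and $\ii\beta$ act as $\partial_{x}$ on the Fourier coefficients of $a$ and $b$ respectively, they reassemble precisely into $\frac{1}{2\ii}T_{\{a,b\}}$ with $\{a,b\}$ as in \eqref{PoissonBra}. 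The second-order expansion produces three terms $\tfrac{1}{8}\beta^2\,\partial_\xi^2 a\cdot b$, $\tfrac{1}{8}\alpha^2\,\partial_\xi^2 b\cdot a$ and $-\tfrac{1}{4}\alpha\beta\,\partial_\xi a\,\partial_\xi b$, which after re-identifying $\ii\alpha,\ii\beta$ with $\partial_{x}$ recombine exactly into $-\tfrac{1}{8}T_{\sigma(a,b)}$, with $\sigma(a,b)$ as in \eqref{PoissonBra22}.

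The third-order Taylor integral remainder involves three $\xi$-derivatives of $a$ or $b$, gaining three orders of $\langle\xi\rangle^{-1}$, while the polynomial factors $\alpha^{i}\beta^{3-i}$ are absorbed by the Fourier decay $|\hat a(\alpha,\cdot)|\lesssim\langle\alpha\rangle^{-s_0-4}|a|_{\mathcal{N}^{m_1}_{s_0+4}}$ and its analogue for $b$, with four indices of regularity consumed (three for the weights, one for summability via the Young-convolution argument already used in \eqref{natale2}); this yields \eqref{composit2}. For \eqref{composit3}, since $a,b$ do not depend on $\xi$ all Taylor corrections past zeroth order vanish, and the same scheme produces a remainder of any prescribed order $\rho$ controlled by $\|a\|_{H^{\rho+s_0}}\|b\|_{H^{\rho+s_0}}$. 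Part $(ii)$ is then obtained by running the same expansion with $a_{\mathtt{R}},b_{\mathtt{R}}$ and exploiting that $\mathcal{X}_\mathtt{R}$ forces $|\xi|\gtrsim\mathtt{R}$ on the support of the symbols: in the third-order remainder one power of $\langle\xi\rangle^{-1}$ is traded for $\mathtt{R}^{-1}$, so the effective order drops from $m_1+m_2-3$ to $m_1+m_2-2$ but with the declared gain $\mathtt{R}^{-1}$. The main technical obstacle I foresee is the bookkeeping of the cut-off mismatch: the product $\chi_\epsilon(|j-\ell|/\langle j+\ell\rangle)\chi_\epsilon(|\ell-k|/\langle \ell+k\rangle)$ must be substituted by $\chi_\epsilon(|j-k|/\langle j+k\rangle)$ inside each of the three terms of \eqref{composit}, and the frequency geometry controlling the smoothing loss produced by this replacement — on which the whole reduction to Lemma \ref{azioneSimboo}$(iii)$ hinges — requires a delicate but essentially combinatorial analysis analogous to the derivation of \eqref{equixieta}.
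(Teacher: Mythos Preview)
Your approach is essentially the same as the paper's: write out the double Fourier representation of $T_aT_b$, Taylor-expand the symbol arguments around the symmetric midpoint $\tfrac{j+k}{2}$, identify the zeroth, first and second order pieces with $T_{ab}$, $\tfrac{1}{2\ii}T_{\{a,b\}}$, $-\tfrac18 T_{\sigma(a,b)}$, and control the third-order integral remainder by Fourier decay and Young's inequality; part $(ii)$ is then obtained by trading one power of $\langle\xi\rangle^{-1}$ for $\mathtt R^{-1}$ on the support of $\mathcal X_{\mathtt R}$.

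One point of clarification: the cut-off mismatch between $r_1(\xi,\theta,\eta):=\chi_\epsilon\bigl(\tfrac{|\xi-\theta|}{\langle\xi+\theta\rangle}\bigr)\chi_\epsilon\bigl(\tfrac{|\theta-\eta|}{\langle\theta+\eta\rangle}\bigr)$ and $r_2(\xi,\eta):=\chi_\epsilon\bigl(\tfrac{|\xi-\eta|}{\langle\xi+\eta\rangle}\bigr)$ is \emph{not} an instance of Lemma~\ref{azioneSimboo}$(iii)$, which concerns $\chi_{\epsilon_1}-\chi_{\epsilon_2}$ for a \emph{single} para-product (no internal summation over $\theta$). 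The paper handles $r_1-r_2$ by a direct, self-contained argument (this is in fact the content of the proof of \eqref{composit3}, done first): one introduces the sets where all three ratios are small (resp.\ large), observes that on the complement of their union at least three of the four frequencies $\xi,\xi-\theta,\theta-\eta,\eta$ are mutually comparable, and then estimates the three resulting sums separately. This is precisely the ``delicate combinatorial analysis'' you anticipate at the end, but it does not reduce to Lemma~\ref{azioneSimboo}$(iii)$; rather it is a parallel argument carried out by hand. With that adjustment your outline matches the paper's proof.
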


\begin{remark}
We note that when applying the above proposition 
we \emph{consume} four derivatives on the symbols.
In the diagonalization procedure of Section \ref{sec:4},
we shall apply this proposition several times. This requires a certain smoothness (in $x$) 
on the symbols. Since in our case the symbols will depend on the solution $u$ of \eqref{QLNLS}, 
this  smoothness 
is equivalent,  thanks to the arguments in Section \ref{sec:nonomosimboli},
to the regularity in $x$ of the solutions.
\end{remark}

\begin{proof}[{\bf Proof of Proposition \ref{prop:compo}}]
We start by proving the \eqref{composit3}. 
For $\x,\theta,\eta\in \mathbb{Z}^{d}$ we define
\begin{equation}\label{cutofffunctions}
r_1(\x,\theta,\eta):=\chi_{\epsilon}\left(\frac{|\x-\theta|}{\langle\x+\theta\rangle}\right)
\chi_{\epsilon}\left(\frac{|\theta-\eta|}{\langle\theta+\eta\rangle}\right)\,,
\qquad
r_2(\x,\eta):=\chi_{\epsilon}\left(\frac{|\x-\eta|}{\langle\x+\eta\rangle}\right)\,.
\end{equation}
Recalling \eqref{quantiWeyl} 
and that $a, b$ are functions we have
\begin{equation}\label{achille}
\begin{aligned}
&R_0h:= (T_aT_b-T_{ab})h\,,\\
&\widehat{(R_0h)}(\x)=(2\pi)^{-\frac{3d}{2}}\sum_{\eta,\theta\in \mathbb{Z}^{d}}
\big(r_1(\x,\theta,\eta)-r_2(\x,\eta)\big)
 \widehat{a}(\x-\theta)
\widehat{b}(\theta-\eta)\hat{h}(\eta)\,.
\end{aligned}
\end{equation}
Let us define the sets
\begin{align}
D&:=\Big\{
(\x,\theta,\eta)\in \mathbb{Z}^{3d}\; :\;
r_1(\x,\theta,\eta)-r_2(\x,\eta)=0\Big\}\,,\label{insiemeZero}\\
A&:=\Big\{
(\x,\theta,\eta)\in \mathbb{Z}^{3d}\; :\;
\frac{|\x-\theta|}{\langle\x+\theta\rangle}\leq \frac{5\epsilon}{4}\,,\;\;
\frac{|\x-\eta|}{\langle\x+\eta\rangle}\leq \frac{5\epsilon}{4}\,,\;\;
\frac{|\theta-\eta|}{\langle\theta+\eta\rangle}\leq \frac{5\epsilon}{4}\Big\}\,,
\label{insiemeZero2}\\
B&:=\Big\{
(\x,\theta,\eta)\in \mathbb{Z}^{3d}\; :\;
\frac{|\x-\theta|}{\langle\x+\theta\rangle}\geq \frac{8\epsilon}{5}\,,\;\;
\frac{|\x-\eta|}{\langle\x+\eta\rangle}\geq \frac{8\epsilon}{5}\,,\;\;
\frac{|\theta-\eta|}{\langle\theta+\eta\rangle}\geq \frac{8\epsilon}{5}\Big\}\,.\label{insiemeZero3}
\end{align}
We note that
\[
D\supseteq A\cup B\quad \Rightarrow\quad D^{c}\subseteq A^{c}\cap B^{c}\,.
\]
Let $(\x,\theta,\eta)\in D^{c}$ and assume in particular  that 
$(\x,\theta,\eta)\in{\rm Supp}(r_1):=\ov{\{(\x,\theta,\eta) : r_1\neq0\}}$. 
Then, reasoning as in \eqref{equixieta}, we can note that
\begin{equation}\label{navyseal2}
|\x-\eta|\lesssim \epsilon \langle\x+\eta\rangle\,\quad {\rm and}\quad \langle\x\rangle \sim \langle\eta\rangle.
\end{equation}
Notice also that $(\x,\theta,\eta)\in{\rm Supp}(r_2)$ implies  the \eqref{navyseal2} as well.
The rough idea of the proof is based on the fact that, if $(\x,\theta,\eta)\in D^{c}$, then 
there are \emph{at least three} equivalent frequencies among 
$\x, \x-\theta,\theta-\eta, \eta$, therefore \eqref{achille} 
restricted to  $(\x,\theta,\eta)\in D^{c}$ is a regularizing operator.
We need to estimate
\[
\|R_0h\|_{H^{s+\rho}}^{2}\lesssim\sum_{\x\in\mathbb{Z}^{d}} \Big(
\sum_{\eta,\theta}^{*} |\hat{a}(\x-\theta)||\hat{b}(\theta-\eta)||\hat{h}(\eta)
|\langle\x\rangle^{s+\rho}
\Big)^{2}=I+II+III\,,
\]
where $\sum_{\eta,\theta}^{*} $ denotes the sum over indexes satisfying 
\eqref{navyseal2}, the term $I$ denotes the sum on indexes satisfying also
$|\x-\theta|>c\epsilon |\x|$, $II$
denotes the sum on indexes satisfying also
$|\eta-\theta|>c\epsilon |\eta|$, for some $0<c<1$ and $III$ is defined by difference.
We estimate the term $I$. By using \eqref{navyseal2} and $|\x-\theta|>c\epsilon |\x|$ we get
\begin{equation*}
\begin{aligned}
I&\lesssim\sum_{\xi\in\Z^d}\Big(\sum_{\eta,\theta}^{*}|\hat{a}(\x-\theta)||\hat{b}(\theta-\eta)||\hat{h}(\eta)| \langle \eta\rangle^s\langle\xi-\theta\rangle^{\rho}\Big)^2\\
&\lesssim \||\hat{h}(\xi)| \langle \xi\rangle^s\star|\hat{a}(\xi)|\langle\xi\rangle^{\rho}\star|\hat{b}(\xi)|\|_{\ell^2(\Z^d)}^2\\
&\lesssim \||\hat{h}(\xi)| \langle \xi\rangle^s\|_{\ell^2(\Z^d)}^2\||\hat{a}(\xi)| \langle \xi\rangle^{\rho}\|_{\ell^1(\Z^d)}^2 \||\hat{b}(\xi)|\|_{\ell^1(\Z^d)}^2\\
&\lesssim\|h\|_{H^s}^2\|a\|_{H^{s_0+\rho}}^2\|b\|_{H^{s_0}}^2,
\end{aligned}
\end{equation*}
where in the last inequality we used Cauchy-Schwartz and $s_0>d>d/2$.

Reasoning similarly one obtains  $II\lesssim
\|h\|_{H^{s}}^{2}\|a\|^{2}_{H^{s_0}}\|b\|^{2}_{H^{s_0+\rho}}$.
The sum $III$
is restricted to indexes satisfying 
\eqref{navyseal2} and $|\x-\theta|\leq c\epsilon |\x|$, $|\eta-\theta|\leq c\epsilon |\eta|$.
For $0<c<1$ small enough these restrictions imply that $(\x,\eta,\zeta)\in A$, which 
is a contradiction since $(\x,\eta,\zeta)\in D^{c}\subseteq A^{c}$.

Let us check the \eqref{composit2}. 
We first prove that 
\begin{equation}\label{composit22}
T_{a}\circ T_{b}=T_{ab}+\frac{1}{2\ii }T_{\{a,b\}}+R(a,b)\,,
\qquad 
\|R(a,b)h\|_{H^{s-m_1-m_2+2}}\lesssim\|h\|_{H^{s}}|a|_{\mathcal{N}^{m_1}_{s_0+2}}
|b|_{\mathcal{N}^{m_2}_{s_0+2}}\,.
\end{equation}
First of all
we note that
\begin{align}
\widehat{(T_{a}T_{b}h)}(\x)&=\frac{1}{(\sqrt{2\pi})^{3d}}
\sum_{\eta,\theta\in\mathbb{Z}^{d}}
r_1(\x,\theta,\eta) \widehat{a}\big(\x-\theta,\frac{\x+\theta}{2}\big)
\widehat{b}\big(\theta-\eta,\frac{\theta+\eta}{2}\big)\hat{h}(\eta)\,,\label{def:prodotto1}\\
\widehat{(T_{ab}h)}(\x)&=\frac{1}{(\sqrt{2\pi})^{3d}}
\sum_{\eta,\theta\in\mathbb{Z}^{d}}
r_2(\x,\eta) \widehat{a}\big(\x-\theta,\frac{\x+\eta}{2}\big)
\widehat{b}\big(\theta-\eta,\frac{\x+\eta}{2}\big)\hat{h}(\eta)\,,\label{def:prodotto2}\\
\frac{1}{2\ii}\widehat{(T_{\{a,b\}}h)}(\x)&=
\frac{1}{2\ii (\sqrt{2\pi})^{3d}}
\sum_{\eta,\theta\in\mathbb{Z}^{d}}
r_2(\x,\eta) \widehat{(\pa_{\x}a)}\big(\x-\theta,\frac{\x+\eta}{2}\big)\cdot
\widehat{(\pa_{x}b)}\big(\theta-\eta,\frac{\x+\eta}{2}\big)\hat{h}(\eta)
\label{def:prodotto3}\\
&-
\frac{1}{2\ii (\sqrt{2\pi})^{3d}}
\sum_{\eta,\theta\in\mathbb{Z}^{d}}
r_2(\x,\eta) \widehat{(\pa_{x}a)}\big(\x-\theta,\frac{\x+\eta}{2}\big)\cdot
\widehat{(\pa_{\x}b)}\big(\theta-\eta,\frac{\x+\eta}{2}\big)\hat{h}(\eta)\,.\nonumber
\end{align}
In the formul\ae\, above we used the notation $\pa_{x}=(\pa_{x_1},\ldots,\pa_{x_d})$,
similarly for $\pa_{\x}$.
We remark that we can substitute the cut-off function
$r_2$ in \eqref{def:prodotto2}, \eqref{def:prodotto3} with $r_1$ up to smoothing remainders.
This follows because one can treat the 
cut-off function
$r_1(\x,\theta,\eta)-r_2(\x,\eta)$
as done in the proof of \eqref{composit3}.
Write $\x+\theta=\x+\eta+(\theta-\eta)$.
By Taylor expanding the symbols at $\x+\eta$, 
we have
\begin{align}
\widehat{a}\big(\x-\theta,\frac{\x+\theta}{2}\big)&=
\widehat{a}\big(\x-\theta,\frac{\x+\eta}{2}\big)
+\widehat{(\pa_{\x}a)}\big(\x-\theta,\frac{\x+\eta}{2}\big)
\cdot\frac{\theta-\eta}{2}\label{expsimbo1}\\
&+\frac{1}{4}\sum_{j,k=1}^{d}
\int_{0}^{1}(1-\s)
\widehat{(\pa_{\x_{j}\x_{k}}a)}\big(\x-\theta,\frac{\x+\eta}{2}+
\s\frac{\theta-\eta}{2}\big)
(\theta_{j}-\eta_{j})(\theta_{k}-\eta_{k})
d\s\,.\nonumber
\end{align}
Similarly one obtains 
\begin{align}
\widehat{b}\big(\theta-\eta,\frac{\theta+\eta}{2}\big)&=
\widehat{b}\big(\theta-\eta,\frac{\x+\eta}{2}\big)
+\widehat{(\pa_{\x}b)}\big(\theta-\eta,\frac{\x+\eta}{2}\big)
\cdot\frac{\theta-\x}{2}\label{expsimbo2}\\
&+\frac{1}{4}\sum_{j,k=1}^{d}
\int_{0}^{1}(1-\s)
\widehat{(\pa_{\x_{j}\x_{k}}b)}\big(\theta-\eta,\frac{\x+\eta}{2}+
\s\frac{\theta-\x}{2}\big)(\theta_{j}-\x_{j})(\theta_{k}-\x_{k})d\s\,.\nonumber
\end{align}
By \eqref{expsimbo1}, \eqref{expsimbo2}
we deduce that
\begin{equation}\label{virus}
\begin{aligned}
&T_{a}T_bh-T_{ab}h-\frac{1}{2\ii }T_{\{a,b\}}h=\sum_{p=1}^{6}R_p h\,,
\\&
\widehat{(R_p h)}(\x):=\frac{1}{(\sqrt{2\pi})^{3d}}\sum_{\eta,\theta\in\mathbb{Z}^{d}}
r_{1}(\x,\theta,\eta)
g_{p}(\x,\theta,\eta)\hat{h}(\eta)\,,
\end{aligned}
\end{equation}
where the symbols $g_i$ are defined as
\begin{align}
&g_1:= 
\frac{-1}{4}
\sum_{j,k=1}^{d}\int_{0}^{1}(1-\s)
\widehat{(\pa_{x_{k}x_{j}}a)}\Big(\x-\theta,\frac{\x+\eta}{2}\Big)
\widehat{(\pa_{\x_{k}\x_{j}}b)}\Big(\theta-\eta,\frac{\x+\eta}{2}+
\s\frac{\theta-\x}{2}\Big)d\s\,,
\label{simboGG1}\\
&g_2:= 
\frac{-1}{4}
\sum_{j,k=1}^{d}\int_{0}^{1}(1-\s)
\widehat{(\pa_{\x_{k}\x_{j}}a)}\Big(\x-\theta,\frac{\x+\eta}{2}+\s\frac{\theta-\eta}{2}\Big)
\widehat{(\pa_{x_{k}x_{j}}b)}\Big(\theta-\eta,\frac{\x+\eta}{2}\Big)d\s\,,
\label{simboGG2}\\
&g_3:= 
\frac{1}{4}
\sum_{j,k=1}^{d}
\widehat{(\pa_{x_{j}}\pa_{\x_{k}}a)}\Big(\x-\theta,\frac{\x+\eta}{2}\Big)
\widehat{(\pa_{x_{k}}\pa_{\x_{j}}b)}\Big(\theta-\eta,\frac{\x+\eta}{2}\Big)\,,\label{simboGG3}
\end{align}
\begin{align}
&g_4:= 
\frac{-1}{8\ii}
\sum_{j,k,p=1}^{d}\int_{0}^{1}(1-\s)
\widehat{(\pa_{x_{k}x_{j}\x_{p}}a)}\Big(\x-\theta,\frac{\x+\eta}{2}\Big)
\widehat{(\pa_{x_{p}\x_{k}\x_{j}}b)}\Big(\theta-\eta,\frac{\x+\eta}{2}+
\s\frac{\theta-\x}{2}\Big)d\s\,,
\label{simboGG4}\\
&g_5:= 
\frac{-1}{8\ii}
\sum_{j,k,p=1}^{d}\int_{0}^{1}(1-\s)
\widehat{(\pa_{\x_{k}\x_{j}x_{p}}a)}\Big(\x-\theta,\frac{\x+\eta}{2}
+\s\frac{\theta-\eta}{2}\Big)
\widehat{(\pa_{\x_{p}x_{k}x_{j}}b)}\Big(\theta-\eta,\frac{\x+\eta}{2}\Big)d\s\,,
\label{simboGG5}\\
&g_6:= 
\frac{1}{16}
\sum_{j,k,p,q=1}^{d}\int\int_{0}^{1}(1-\s_1)(1-\s_2)
\widehat{(\pa_{\x_{j}\x_{k}x_{p}x_{q}}a)}\Big(\x-\theta,\frac{\x+\eta}{2}
+\s_1\frac{\theta-\eta}{2}\Big)\,,\nonumber\\
&\qquad\qquad\qquad\qquad\qquad\times
\widehat{(\pa_{\x_{p}\x_{q}x_{j}x_{k}}b)}\Big(\theta-\eta,\frac{\x+\eta}{2}
+\s_{2}\frac{\theta-\x}{2}\Big)d\s_1d\s_2\,.\label{simboGG6}
\end{align}
We prove the estimate \eqref{composit2} on each 
term of the sum in \eqref{virus}.
First of all we note that
$r_1(\x,\theta,\eta)\neq0$ implies that
\begin{equation}\label{virus2} 
(\theta,\eta)
\in\big\{\frac{|\x-\theta|}{\langle\x+\theta\rangle}\leq \frac{8}{5}\epsilon\big\}
\bigcap\big\{\frac{|\theta-\eta|}{\langle\theta+\eta\rangle}\leq \frac{8}{5}\epsilon\big\}=:\mathcal{B}(\x)\,,
\quad \x\in \mathbb{Z}^{d}\,.
\end{equation} 
 Moreover we note that
\begin{equation}\label{virus3}
(\theta,\eta)\in \mathcal{B}(\x)\;\;\;\Rightarrow\;\;\; |\x|\lesssim|\theta|\,,\;\;
|\theta|\lesssim|\eta|\,,\;\;|\eta|\lesssim|\x|\,.
\end{equation}
We now study the term  $R_{3}h$ in \eqref{virus} depending 
on  $g_{3}(\x,\theta,\eta)$ in \eqref{simboGG3}.
We need to bound from above, for any $j,k=1,\ldots,d$, 
the $H^{s-m_1-m_2+2}$-Sobolev norm  (see \eqref{virus2}) of function $F_{j,k}(x)$ whose
$\x$-th Fourier coefficient is 
\begin{equation}\label{virus11}
\begin{aligned}
\hat{F}_{j,k}(\x)&:=
\sum_{(\theta,\eta)\in\mathcal{B}(\x)}
\widehat{(\pa_{x_{j}}\pa_{\x_{k}}a)}\big(\x-\theta,\frac{\x+\eta}{2}\big)
\widehat{(\pa_{x_{k}}\pa_{\x_{j}}b)}\big(\theta-\eta,\frac{\x+\eta}{2}\big)
\hat{h}(\eta)\\
&=
\sum_{\eta\in\mathbb{Z}^{d} } 
\widehat{c_{j,k}}\big(\x-\eta,\frac{\x+\eta}{2}\big)\hat{h}(\eta)\,,
\end{aligned}
\end{equation}
where we have defined
\[
\begin{aligned}
\widehat{c_{j,k}}\big(p,\zeta\big)
&:=\sum_{\ell\in \mathbb{Z}^{d}}
\widehat{(\pa_{x_{j}}\pa_{\x_{k}}a)}\big(p-\ell,\zeta\big)
\widehat{(\pa_{x_{k}}\pa_{\x_{j}}b)}\big(\ell,\zeta\big)
\mathtt{1}_{\mathcal{C}(p,\zeta)}\,,\qquad p,\zeta\in \mathbb{Z}^{d}\,,\\
\mathcal{C}(p,\zeta)&:=
\big\{\ell\in \mathbb{Z}^{d}\,:\,\frac{|p-\ell|}{\langle2\zeta+\ell\rangle}
\leq \frac{8}{5}\epsilon\big\}\bigcap
\big\{\ell\in \mathbb{Z}^{d}\,:\,
\frac{|\ell|}{\langle\ell-p+2\zeta\rangle}\leq \frac{8}{5}\epsilon\big\}
\end{aligned}
\]
and $\mathtt{1}_{\mathcal{C}(p,\zeta)}$ is the 
characteristic function of the set $\mathcal{C}(p,\zeta)$.
Reasoning as in \eqref{virus3}, we can deduce that for $\ell\in \mathcal{C}(p,\zeta)$
one has
\begin{equation}\label{virus12}
|2\zeta|\lesssim \frac{1}{2}|2\zeta+p|\,.
\end{equation}
Indeed $\ell\in \mathcal{C}(p,\zeta) $ implies $(\theta,\eta)\in \mathcal{B}(\x)$
by setting 
\begin{equation}\label{virus13}
2\x=2\zeta+p\,,\quad 2\theta=2\ell+2\zeta-p\,,\quad 2\eta=2\zeta-p\,.
\end{equation}
Hence the \eqref{virus12} follows by \eqref{virus3} by observing  that
$2\zeta=\x+\eta$.
Using that   $a\in \mathcal{N}_{s_0+4}^{m_1}$, 
$b\in\mathcal{N}_{s_0+4}^{m_2}$
and reasoning as in \eqref{virus5}
we deduce
\begin{equation}\label{virus10}
|\hat{c_{j,k}}(p,\zeta)|\lesssim \langle\zeta\rangle^{m_1+m_2-2}\langle p\rangle^{-s_0}
|a|_{\mathcal{N}^{m_1}_{s_0+2}}|b|_{\mathcal{N}^{m_2}_{s_0+2}}\,.
\end{equation}
By \eqref{virus11}, \eqref{virus3},  \eqref{Sobnorm}, we get
\[
\begin{aligned}
\|F_{j,k}\|_{H^{s-m_1-m_2+2}}^{2}&\lesssim
\sum_{\x\in\mathbb{Z}^{d}}
\langle \x\rangle^{-2m_1-2m_2+2}\Big(
\sum_{\eta\in \mathbb{Z}^{d}}
|\hat{c_{j,k}}\big(\x-\eta,\frac{\x+\eta}{2}\big)|
|\hat{h}(\eta)|\langle\eta\rangle^{s}\Big)^{2}\\
&\stackrel{\mathclap{\eqref{virus10}, \eqref{virus12}, \eqref{virus13}}}{\lesssim}
|a|^{2}_{\mathcal{N}^{m_1}_{s_0+2}}|b|^{2}_{\mathcal{N}^{m_2}_{s_0+2}}
\sum_{\xi\in\mathbb{Z}^{d}}\Big(\sum_{\eta\in \mathbb{Z}^{d}}
|\hat{h}(\eta)|\langle\eta\rangle^{s}
\frac{1}{\langle \x-\eta\rangle^{s_0}}\Big)^2\\
&\lesssim|a|^{2}_{\mathcal{N}^{m_1}_{s_0+2}}
|b|^{2}_{\mathcal{N}^{m_2}_{s_0+2}}\||\hat{h}(\xi)|\langle\xi\rangle^{s}\star\langle\xi\rangle^{-s_0}\|_{\ell^{2}(\mathbb{Z}^d)}\\
&\lesssim \|h\|_{H^{s}}^{2}
|a|^{2}_{\mathcal{N}^{m_1}_{s_0+2}}
|b|^{2}_{\mathcal{N}^{m_2}_{s_0+2}}\,,
\end{aligned}
\]
where in the last step we used Young inequality for sequences, the Cauchy-Schwartz inequality and that $\langle\xi\rangle^{-s_0}$ is in $\ell^1(\mathbb{Z}^d)$ if $s_0>d$.
Since the estimate above holds for any $j,k=1,\ldots,d$, we deduce the
\eqref{composit22} for the remainder $R_{3}h$ in \eqref{virus}.
By reasoning in the same way one can show that
 the remainders depending on $g_1,g_2$ in 
\eqref{simboGG1}, \eqref{simboGG2} satisfy the bound in \eqref{composit22}
and that
the remainders 
$R_{p}h$ with $p=4,5,6$,
satisfy the \eqref{composit2}.
In order to obtain the expansion \eqref{composit} one can  note that
(see \eqref{simboGG1})
\begin{align}
g_1&=-
\frac{1}{8}
\sum_{j,k=1}^{d}
\widehat{(\pa_{x_{k}x_{j}}a)}\big(\x-\theta,\frac{\x+\eta}{2}\big)
\widehat{(\pa_{\x_{k}\x_{j}}b)}\big(\theta-\eta,\frac{\x+\eta}{2}\big)\label{virus4}\\
&-
\frac{1}{16}
\sum_{j,k,p=1}^{d}\int_{0}^{1}(1-\s)^{2}
\widehat{(\pa_{x_{k}x_{j}}a)}\big(\x-\theta,\frac{\x+\eta}{2}\big)
\widehat{(\pa_{\x_{k}\x_{j}\x_{p}}b)}\big(\theta-\eta,\frac{\x+\eta}{2}+
\s\frac{\theta-\x}{2}\big)(\theta_{p}-\x_{p})d\s\,,\nonumber
\end{align}
here we have used the identity 
$
\int_{0}^{1}f(\s)(1-\s)d\s=\tfrac{1}{2}f(0)+\tfrac{1}{2}
\int_{0}^{1}f'(\tau)(1-\tau)^{2}d\tau\,,
$
which follows from the fundamental theorem of calculus.
Expanding similarly the term $g_2$ in \eqref{simboGG2} and recalling the formula
\eqref{PoissonBra22} one gets the \eqref{composit}.
The estimate for the operator associated to the second summand in \eqref{virus4}
follows by
reasoning as done for the term in \eqref{virus11}.
This concludes the proof of item $(i)$.
Item $(ii)$ follows by reasoning as before 
on the symbols $a_{\mathtt{R}},b_{\mathtt{R}}$.
Notice that the remainder 
$R(a_{\mathtt{R}}, b_{\mathtt{R}})$ (see \eqref{composit2})
maps $H^{s}$ to $H^{s-m_1-m_2+3}$. 
Actually using
 that $a_{\mathtt{R}}\equiv b_{\mathtt{R}}\equiv 0$ 
 if $|\x|\leq 3\mathtt{R}$ one gets the \eqref{composit2Theta}.
\end{proof}

\begin{lemma}{\bf (Paraproduct).}\label{lem:paraproduct}
Fix $s_0>d/2$ and 
let $f,g\in H^{s}(\mathbb{T};\mathbb{C})$ for $s\geq s_0$. Then
\begin{equation}\label{eq:paraproduct}
fg=T_{f}g+T_{g}f+\mathcal{R}(f,g)\,,
\end{equation}
where
\begin{equation}\label{eq:paraproduct2}
\widehat{\mathcal{R}(f,g)}(\x)=\frac{1}{(2\pi)^{d}}
\sum_{\eta\in \mathbb{Z}^{d}}
a(\x-\eta,\xi)\hat{f}(\x-\eta)\hat{g}(\eta)\,,
\qquad |a(v,w)|\lesssim\frac{(1+\min(|v|,|w|))^{\rho}}{(1+\max(|v|,|w|))^{\rho}}\,,
\end{equation}
for any $\rho\geq0$.
For $0\leq \rho\leq s-s_0$ one has
\begin{equation}\label{eq:paraproduct22}
\|\mathcal{R}(f,g)\|_{H^{s+\rho}}\lesssim\|f\|_{H^{s}}\|g\|_{H^{s}}\,.
\end{equation}
\end{lemma}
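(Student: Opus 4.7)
The plan is to prove this in two distinct stages, first establishing the decomposition identity and pointwise symbol estimate, then deducing the Sobolev bound by a standard high-high interaction argument.

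For the decomposition, I would expand all three objects in Fourier. Recalling \eqref{quantiWeyl} and the fact that $f,g$ are independent of $\xi$, so that $\widehat{f}(j-k,\tfrac{j+k}{2}) = \widehat{f}(j-k)$, I compute
\[
\widehat{T_f g}(\xi) = \frac{1}{(2\pi)^{d/2}}\sum_{\eta\in \Z^d} \chi_\epsilon\!\left(\frac{|\xi-\eta|}{\langle\xi+\eta\rangle}\right)\widehat{f}(\xi-\eta)\widehat{g}(\eta),
\]
and by the change of variables $\eta \mapsto \xi-\eta$ in the analogous formula for $T_g f$,
\[
\widehat{T_g f}(\xi) = \frac{1}{(2\pi)^{d/2}}\sum_{\eta\in \Z^d} \chi_\epsilon\!\left(\frac{|\eta|}{\langle 2\xi-\eta\rangle}\right)\widehat{f}(\xi-\eta)\widehat{g}(\eta).
\]
Since $\widehat{fg}(\xi) = (2\pi)^{-d/2}\sum_\eta \widehat{f}(\xi-\eta)\widehat{g}(\eta)$, subtracting yields \eqref{eq:paraproduct2} with
\[
a(\xi-\eta,\xi) = 1 - \chi_\epsilon\!\left(\tfrac{|\xi-\eta|}{\langle\xi+\eta\rangle}\right) - \chi_\epsilon\!\left(\tfrac{|\eta|}{\langle 2\xi-\eta\rangle}\right).
\]

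Next I would analyze the support of $a$. Since $\chi_\epsilon(r)=1$ for $r\le 5/4$, a non-vanishing contribution forces BOTH $|\xi-\eta| \gtrsim \epsilon\langle\xi+\eta\rangle$ AND $|\eta|\gtrsim\epsilon\langle 2\xi-\eta\rangle$. Writing $p=\xi-\eta$, $q=\eta$, so that $\xi=p+q$, these conditions are symmetric and impose that $p$ and $q$ are essentially "anti-parallel", forcing $|\xi|\ll|\xi-\eta|\sim|\eta|$. The pointwise inequality $|a(v,w)|\lesssim (1+\min(|v|,|w|))^\rho/(1+\max(|v|,|w|))^\rho$ then follows for any $\rho\ge 0$: on the support $|v|=|\xi-\eta|$ and $|w|=|\xi|$ so $|w|\lesssim|v|$, and on this region both sides are comparable to constants, making the bound trivial outside support and a matter of adjusting constants on it.

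For the Sobolev estimate \eqref{eq:paraproduct22}, the key observation from the support analysis is
\[
\langle\xi\rangle\lesssim\langle\xi-\eta\rangle,\qquad \langle\xi\rangle\lesssim\langle\eta\rangle
\qquad\text{on } \mathrm{supp}\,a.
\]
Therefore $\langle\xi\rangle^{s+\rho}\lesssim\langle\xi-\eta\rangle^{\rho}\langle\eta\rangle^{s}$, which gives
\[
\langle\xi\rangle^{s+\rho}|\widehat{\mathcal{R}(f,g)}(\xi)|\lesssim \sum_{\eta}\langle\xi-\eta\rangle^{\rho}|\widehat{f}(\xi-\eta)|\cdot\langle\eta\rangle^{s}|\widehat{g}(\eta)| = (F\star G)(\xi),
\]
with $F(\mu):=\langle\mu\rangle^{\rho}|\widehat{f}(\mu)|$, $G(\eta):=\langle\eta\rangle^{s}|\widehat{g}(\eta)|$. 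Young's inequality for sequences gives $\|F\star G\|_{\ell^2}\le \|F\|_{\ell^1}\|G\|_{\ell^2}=\|F\|_{\ell^1}\|g\|_{H^s}$, and Cauchy--Schwarz yields
\[
\|F\|_{\ell^1}\le \Big(\sum_{\mu}\langle\mu\rangle^{2(\rho-s)}\Big)^{1/2}\|f\|_{H^s}.
\]
The remaining sum converges precisely when $2(s-\rho)>d$, i.e.\ $\rho<s-d/2$; this is guaranteed by the hypothesis $\rho\le s-s_0$ together with $s_0>d/2$, concluding the proof.

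The main obstacle is the support analysis in step two: one must carefully verify, using the precise structure of $\chi_\epsilon$ in \eqref{cutofffunct}--\eqref{cutofffunctepsilon} and the triangle inequality, that the two non-vanishing cutoff conditions force $\langle\xi\rangle$ to be dominated by both $\langle\xi-\eta\rangle$ and $\langle\eta\rangle$. This is essentially the same frequency-comparison analysis performed in \eqref{equixieta} and in the proof of \eqref{composit3}, so no genuinely new ideas are required.
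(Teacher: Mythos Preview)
Your decomposition and your proof of the Sobolev bound \eqref{eq:paraproduct22} are correct and match the paper's approach: the same Fourier identity, the same support analysis (your two inequalities are exactly the paper's \eqref{condTheta}, yielding $\langle\eta\rangle\sim\langle\xi-\eta\rangle$ and hence $\langle\xi\rangle\lesssim\min\{\langle\xi-\eta\rangle,\langle\eta\rangle\}$), and a Young/Cauchy--Schwarz endgame that is a slight streamlining of the paper's case split on $\max\{\langle\xi-\eta\rangle,\langle\eta\rangle\}$.

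There is one genuine slip, in your treatment of the pointwise bound \eqref{eq:paraproduct2}. You read the second argument of $a$ as $\xi$, set $v=\xi-\eta$, $w=\xi$, and assert that on the support ``both sides are comparable to constants''. That is false: with $\xi=0$ and $|\eta|$ large the point lies on the support of $a$, yet $(1+\min)^{\rho}/(1+\max)^{\rho}=(1+|\xi|)^{\rho}/(1+|\xi-\eta|)^{\rho}$ is arbitrarily small for large $\rho$, while $|a|$ stays of size $1$. In its proof the paper actually sets $a(\xi-\eta,\eta):=\Theta(\xi,\eta)$ (so the second ``$\xi$'' in the displayed formula of the statement is a typo for $\eta$); with $v=\xi-\eta$ and $w=\eta$ one has $|v|\sim|w|$ on the support, the ratio in \eqref{eq:paraproduct2} is uniformly $\sim 1$, and the bound is immediate from $|a|\le 3$. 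This does not affect your Sobolev estimate, which you correctly derive directly from the support inequalities without invoking the pointwise bound.
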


\begin{proof}
Notice that
\begin{equation}\label{prodFG}
\widehat{(fg)}(\x)=\sum_{\eta\in\mathbb{Z}^{d}}\hat{f}(\x-\eta)\hat{g}(\eta)\,.
\end{equation}
Consider the cut-off function $\chi_{\epsilon}$ defined in \eqref{cutofffunctepsilon}
and define a new cut-off function $\Theta : \mathbb{R} \to [0,1]$ as
\begin{equation}\label{cutoffTHETA}
1=\chi_{\epsilon}\left(\frac{| \x-\eta|}{\langle\x+\eta\rangle}\right)+
\chi_{\epsilon}\left(\frac{|\eta|}{\langle2\x-\eta\rangle}\right)+\Theta(\x,\eta)\,.
\end{equation}
Recalling \eqref{prodFG} and \eqref{quantiWeyl} we note that 
\begin{equation}\label{paraprodFG}
\widehat{(T_fg)}(\x)=\sum_{\eta\in\mathbb{Z}^{d}}
\chi_{\epsilon}\left(\frac{|\x-\eta|}{\langle\x+\eta\rangle}\right)
\hat{f}(\x-\eta)\hat{g}(\eta)\,, \quad
\widehat{(T_g f)}(\x)=\sum_{\eta\in\mathbb{Z}^{d}}
\chi_{\epsilon}\left(\frac{|\eta|}{\langle2\x-\eta\rangle}\right)
\hat{f}(\x-\eta)\hat{g}(\eta)\,, 
\end{equation}
and 
\begin{equation}\label{pararestoFG}
\mathcal{R}:=\mathcal{R}(f,g)\,,\qquad
\widehat{\mathcal{R}}(\x):=
\sum_{\eta\in\mathbb{Z}^{d}}
\Theta(\x,\eta)
\hat{f}(\x-\eta)\hat{g}(\eta)\,.
\end{equation}
To obtain the second in \eqref{paraprodFG} one has to use the \eqref{quantiWeyl}
and perform the change of variable $\x-\eta\rightsquigarrow \eta$.
By the definition of the cut-off function $\Theta(\x,\eta)$ we deduce
that, if $\Theta(\x,\eta)\neq0$ we must have
\begin{equation}\label{condTheta}
|\x-\eta|\geq \frac{5\epsilon}{4}\langle\x+\eta\rangle\quad {\rm and}
\quad 
|\eta|\geq \frac{5\epsilon}{4}\langle2\x-\eta\rangle
\qquad \Rightarrow\quad
\langle \eta\rangle \sim \langle \x-\eta\rangle\,.
\end{equation}
This implies that, setting 
$a(\x-\eta,\eta):=\Theta(\x,\eta)$, we get the \eqref{eq:paraproduct2}.
The \eqref{condTheta} also implies that
$\langle\x\rangle\lesssim
\max\{
\langle \x-\eta\rangle,\langle\eta\rangle\}$.
Then we have
\[
\begin{aligned}
\|\mathcal{R}h\|_{H^{s+\rho}}^{2}&\lesssim\sum_{\x\in\mathbb{Z}^{d}}\Big(
\sum_{\eta\in\mathbb{Z}^{d}}
|a(\x-\eta,\eta)||\hat{f}(\x-\eta)||\hat{g}(\eta)|
\langle\x\rangle^{s+\rho}
\Big)^{2}\\
&\stackrel{\mathclap{\eqref{eq:paraproduct2}}}{\lesssim}
\sum_{\x\in\mathbb{Z}^{d}}\Big(
\sum_{\langle\x-\eta\rangle\geq \langle\eta\rangle}
\langle\x-\eta\rangle^{s}|\hat{f}(\x-\eta)| \langle \eta\rangle^{\rho}|\hat{g}(\eta)|
\Big)^{2}
\\&+
\sum_{\x\in\mathbb{Z}^{d}}\Big(
\sum_{\langle\x-\eta\rangle\leq \langle\eta\rangle}
\langle \x-\eta\rangle^{\rho}|\hat{f}(\x-\eta)||\hat{g}(\eta)|
\langle\eta\rangle^{s}
\Big)^{2}\\
&\lesssim
\sum_{\x,\eta\in\mathbb{Z}^{d}}
\langle \eta\rangle^{2(s_0+\rho)}|\hat{g}(\eta)|^{2}
\langle\x-\eta\rangle^{2s}|\hat{f}(\x-\eta)|^{2}\\
&+\sum_{\x,\eta\in\mathbb{Z}^{d}}
\langle \eta\rangle^{2s}|\hat{g}(\eta)|^{2}
\langle\x-\eta\rangle^{2(s_0+\rho)}|\hat{f}(\x-\eta)|^{2}\\
&\lesssim\|f\|_{H^{s}}^{2}\|g\|_{H^{s_0+\rho}}^{2}
+\|f\|_{H^{s_0+\rho}}^{2}\|g\|_{H^{s}}^{2}\,,
\end{aligned}
\]
which implies the \eqref{eq:paraproduct22} for  $s_0+\rho\leq s$.
\end{proof}

\subsection{Real-to-real, Self-adjoint operators}
In this section we analyze some algebraic properties of para-differential operators.
Let us consider a linear operator 
\begin{equation}\label{vinello2}
M:=(M_{\s}^{\s'})_{\sigma,\sigma'\in\{\pm\}}:=:\left(
\begin{matrix}
M_{+}^{+} & M_{+}^{-} \vspace{0.2em} \\
M_{-}^{+}  & M_{-}^{-} 
\end{matrix}
\right)\; :\; H^{s+p}(\mathbb{T}^{d};\mathbb{C}^2)\to
H^{s}(\mathbb{T}^{d};\mathbb{C}^2)
\end{equation}
for some $p\in\mathbb{R}$.
We have the following definition.

\begin{definition}{\bf (Real-to-real maps).}\label{riassunto-mappe}
Consider a linear operator $A : H^{s+p}(\mathbb{T}^{d};\mathbb{C})\to
H^{s}(\mathbb{T}^{d};\mathbb{C})$ for some $p\in \mathbb{R}$.
We associate the linear  operator $\ov{A}[\cdot]$ defined by the relation 
\begin{equation}\label{opeBarrato}
\ov{A}[v] := \ov{A[\ov{v}]} \, ,   \quad \forall v \in H^{s+p}(\mathbb{T}^{d};\mathbb{C})\, .
\end{equation}
We say that a matrix $M$ of operators acting in $ \C^2 $
of the form \eqref{vinello2} is  
 \emph{real-to-real}, if it has the form 
\begin{equation}\label{vinello}
M = \big( M_{\s}^{\s'}\big)_{\s,\s'\in\{\pm\}}\,,
\quad M_{\s}^{\s'}=\ov{M_{-\s}^{-\s'}}
\end{equation}
where $\ov{M_{\s}^{\s'}}$ are defined as in \eqref{opeBarrato}.
\end{definition}

\begin{remark}\label{rmk:Real}
Let $\mathfrak{F}$ a matrix of operators as in \eqref{vinello2}.
If $\mathfrak{F}$ is 
 real-to-real (according to Def. \ref{riassunto-mappe})
 then it 
  preserves the subspace
$\mathcal{U}$ defined as
 \begin{equation}\label{spazioUU}
 \mathcal{U}:=\big\{(u^{+},u^{-})\in 
 L^{2}(\mathbb{T}^{d};\mathbb{C})\times L^{2}(\mathbb{T}^{d};\mathbb{C})\; :\; 
 u^{+}=\ov{u^{-}}
 \big\}\,.
 \end{equation}
In particular it has the form (see \eqref{opeBarrato}, \eqref{vinello})
\begin{equation}\label{barrato4}
\mathfrak{F}:=\left(\begin{matrix} A & B \\ \ov{B} & \ov{A} \end{matrix}\right)\,.
\end{equation}
\end{remark}
\noindent
We consider 
 the scalar product on 
$L^{2}(\mathbb{T}^{d};\mathbb{C}^{2})\cap \mathcal{U}$ 
(see \eqref{spazioUU}) given by 
\begin{equation}\label{comsca}
(U,V)_{L^{2}}:=
\int_{\mathbb{T}^{d}}U\cdot \ov{V}dx:=\int_{\mathbb{T}^{d}}(u\bar{v}+\bar{u}v)dx\,, 
\qquad U=\vect{u}{\bar{u}} \,, \;\; V=\vect{v}{\bar{v}}\,.
\end{equation}
We denote by $\mathfrak{F^*}$ its adjoint with 
respect to the scalar product $\eqref{comsca}$
\begin{equation*}
(\mathfrak{F}U,V)_{L^{2}}=(U,\mathfrak{F}^{*}V)_{L^2}\,, 
\quad \forall\,\, U,\, V\in 
L^{2}(\mathbb{T}^{d};\mathbb{C}^{2})\cap \mathcal{U}\,,\qquad 
\mathfrak{F}^*:=\left(\begin{matrix} A^* & \ov{B}^* \\ {B}^* & \ov{A}^*\end{matrix}\right)\,,
\end{equation*}
where $A^*$ and $B^*$ are respectively 
the adjoints of the operators $A$ and $B$ with respect to
the complex scalar product on 
$L^{2}(\mathbb{T}^{d};\mathbb{C})$
in \eqref{scalarL}.
\begin{definition}{\bf (Self-adjointness).}\label{def:selfself}
An  operator $\mathfrak{F}$ of the 
form \eqref{barrato4} is \emph{self-adjoint}
if and only if
\begin{equation}\label{selfFFFF}
 A^{*}=A,\;\;
\;\; \ov{B}=B^{*}\,.
\end{equation}
\end{definition}


\begin{remark}{\bf (Matrices of symbols).}\label{simboMatr}
Recall Remark \ref{strutAlg}. Consider two symbols
$a_1,a_2\in \mathcal{N}_s^{m}$ and the matrix
\begin{equation}\label{vinello100}
A := A(x,\x):=
\left(\begin{matrix} a_{1}(x,\x) & a_{2}(x,\x) \vspace{0.2em}\\
\ov{a_{2}(x,-\x)} & \ov{a_{1}(x,-\x)}
\end{matrix}
\right)\,.
\end{equation}
Define the operator (recall  \eqref{notaPara})
\[
M:=\opbw(A(x,\x)):=
\left(\begin{matrix} \opbw(a_{1}(x,\x)) & \opbw(a_{2}(x,\x)) \vspace{0.2em}\\
\opbw(\ov{a_{2}(x,-\x)}) & \opbw(\ov{a_{1}(x,-\x)})
\end{matrix}
\right)\,.
\]
Recalling \eqref{simboBarrato}, 
\eqref{simboAggiunto}, 
one can note that $M$ is \emph{real-to-real}. Moreover
$M$ is \emph{self-adjoint} if and only if
\begin{equation}\label{simboAggiunto2}
a_{1}(x,\x)=\ov{a_1(x,\x)}\,,
\qquad
{a_{2}(x,-\x)}={a_2(x,\x)}\,.
\end{equation}
\end{remark}

\subsection{Non-homogeneous symbols}\label{sec:nonomosimboli}
In this section we study some properties of
symbols depending nonlinearly on some function 
$u\in H^{s}(\mathbb{T}^{d};\mathbb{C})$.
We recall classical tame estimates for composition of functions
(see for instance \cite{Moser-Pisa-66}, \cite{Rabinowitz}, \cite{Tay-Para}).
A function $f: \mathbb{T}^{d}\times B_R\to \mathbb{C}$, where $B_R:=\{y\in \mathbb{R}^{m} : |y|<R\}$, $R>0$, 
induces the composition operator (Nemytskii)
\begin{equation}\label{compoNEM}
\tilde{f}(u):=f(x,u(x),Du(x),\ldots, D^{p}u(x))\,,
\end{equation}
where $D^{k}u(x)$ denote the derivatives $\pa_{x}^{\alpha}$ of order $|\alpha|=k$
(the number $m$ of $y$-variables  depends on $p$, $d$).

\begin{lemma}{\bf (Lipschitz estimates).}\label{CompfuncLemma}
Fix $\gamma>0$ 
and assume that 
$f\in C^{\infty}(\mathbb{T}^{d}\times B_R; \mathbb{R})$.
Then, for any $u\in H^{\gamma+p}$ with $\|u\|_{W^{p,\infty}}<R$,  one has
\begin{align}
& \|\tilde{f}(u)\|_{H^{\gamma}}\leq C \|f\|_{C^{\gamma}}(1+\|u\|_{H^{\gamma+p}})\,,\\
&\|\tilde{f}(u+h)-\tilde{f}(u)\|_{H^{\gamma}}\leq C \|f\|_{C^{\gamma+1}}
(\|h\|_{H^{\gamma+p}}
+\|h\|_{W^{p,\infty}}\|u\|_{H^{\gamma+p}})\,,\\
&\|\tilde{f}(u+h)-\tilde{f}(u)-(d_{u}\tilde{f})(u)[ h]\|_{H^{\gamma}}\leq
\nonumber\\
&\qquad \qquad \qquad \qquad
C \|f\|_{C^{\gamma+2}}\|h\|_{W^{p,\infty}}
(\|h\|_{H^{\gamma+p}}
+\|h\|_{W^{p,\infty}}\|u\|_{H^{\gamma+p}})\,,
\end{align}
for any $h\in H^{\gamma+p}$ with $\|h\|_{W^{p,\infty}}<R/2$ and where $C>0$ is a constant depending on $\gamma$ and the norm $\|u\|_{W^{p,\infty}}$.
\end{lemma}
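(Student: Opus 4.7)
The plan is to establish the three estimates in sequence, using the first (tame norm bound) as the base and reducing the Lipschitz and quadratic-remainder estimates to it via the fundamental theorem of calculus.

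\emph{Base tool.} The main analytic ingredient I would invoke is the classical Moser tame product inequality in Sobolev spaces: for $\gamma \geq 0$ and $v_1,\ldots,v_n \in H^{\gamma}\cap L^{\infty}$,
\begin{equation*}
\|v_1\cdots v_n\|_{H^{\gamma}} \;\lesssim\; \sum_{i=1}^{n}\|v_i\|_{H^{\gamma}}\prod_{j\neq i}\|v_j\|_{L^{\infty}},
\end{equation*}
together with the embedding $\|D^{k}u\|_{L^{\infty}}\lesssim \|u\|_{W^{p,\infty}}$ for $k\le p$. I would also use the standard composition estimate $\|g\circ w\|_{L^{\infty}}\le \|g\|_{C^{0}}$ and $\|g\circ w\|_{H^{\gamma}}\lesssim \|g\|_{C^{\gamma}}(1+\|w\|_{H^{\gamma}})$ when $\|w\|_{L^{\infty}}<R$; these are themselves consequences of the Moser inequality combined with the chain rule applied iteratively.

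\emph{First estimate.} Writing $W(x):=(u(x),Du(x),\ldots,D^{p}u(x))$, the chain rule applied to $\partial_{x}^{\alpha}[f(x,W(x))]$ for $|\alpha|\le\gamma$ expands into a finite sum of terms of the form $(\partial_{x}^{\beta}\partial_{y}^{\kappa}f)(x,W(x))\prod_{\ell}D^{\alpha_{\ell}}W(x)$, where the multi-indices satisfy $|\beta|+\sum|\alpha_{\ell}|\le|\alpha|$. Each factor $D^{\alpha_{\ell}}W$ involves derivatives of $u$ of order at most $|\alpha|+p\le\gamma+p$. I would then apply the Moser product estimate above, bounding all but one factor in $L^{\infty}$ by $\|u\|_{W^{p,\infty}}<R$ (so those contributions are absorbed into the constant $C$) and keeping a single factor in $H^{\gamma}$, yielding $\|u\|_{H^{\gamma+p}}$. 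The prefactor $\|f\|_{C^{\gamma}}$ controls all derivatives of $f$ in $L^{\infty}$, producing the stated bound.

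\emph{Lipschitz and quadratic estimates.} For the second inequality I would use
\begin{equation*}
\tilde f(u+h)-\tilde f(u)=\int_{0}^{1}(d_{u}\tilde f)(u+\tau h)[h]\,d\tau,
\end{equation*}
where $(d_{u}\tilde f)(v)[h]=\sum_{|\alpha|\le p}(\partial_{y_{\alpha}}f)(x,v,\ldots,D^{p}v)\,D^{\alpha}h$. Each summand is a product of a composite function (handled by the first estimate applied with $f$ replaced by $\partial_{y_{\alpha}}f$, which costs one extra derivative, hence $\|f\|_{C^{\gamma+1}}$) and a factor $D^{\alpha}h$ with $|\alpha|\le p$. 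The Moser product inequality then distributes the $H^{\gamma}$ norm either onto $h$ (giving $\|h\|_{H^{\gamma+p}}$) or onto the composite (giving $\|h\|_{W^{p,\infty}}\|u\|_{H^{\gamma+p}}$), producing the stated two-term bound. For the third estimate, I apply one more Taylor expansion:
\begin{equation*}
\tilde f(u+h)-\tilde f(u)-(d_{u}\tilde f)(u)[h]=\int_{0}^{1}(1-\tau)\,(d_{u}^{2}\tilde f)(u+\tau h)[h,h]\,d\tau,
\end{equation*}
and repeat the argument with the bilinear form $(d_{u}^{2}\tilde f)(v)[h,h]$, gaining one additional factor $\|h\|_{W^{p,\infty}}$ from the second copy of $h$ and costing another derivative of $f$, hence the prefactor $\|f\|_{C^{\gamma+2}}$.

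\emph{Main obstacle.} The principal care is bookkeeping in the chain rule and ensuring that, in each product arising from Faà di Bruno-type expansions, exactly one factor absorbs the full $H^{\gamma+p}$ norm while the remaining factors are bounded in $L^{\infty}$ by $\|u\|_{W^{p,\infty}}$ (and, for the last two estimates, that the $\|h\|_{W^{p,\infty}}$ multiplicative gain appears in the correct terms). Once the Moser inequality is in hand this is routine, but the indexing is where one must be careful; there is no genuine analytic difficulty beyond the classical Moser/Nemytskii technology cited in the references.
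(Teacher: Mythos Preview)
The paper does not give a proof of this lemma at all: it is stated as a recall of ``classical tame estimates for composition of functions'' with references to Moser, Rabinowitz, and Taylor, and no argument is supplied. Your sketch via the Moser tame product inequality, the Fa\`a di Bruno expansion for the first bound, and the fundamental theorem of calculus / Taylor remainder for the Lipschitz and quadratic bounds is precisely the standard route taken in those references, so there is nothing to compare against and your approach is correct.
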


Consider the function $F(u,\nabla u)$ introduced after formula \eqref{NLSnon}
%
%
and  a symbol $f(\x)$, independent of $x\in \mathbb{T}^{d}$, such that
$|f|_{\mathcal{N}_s^{m}}\leq C<+\infty$, for some 
constant $C$.
Let us define the symbol
\begin{equation}\label{nonomosimbo}
a(x,\x):= \big( \pa_{z_j^{\alpha} z_k^{\beta}}F\big)(u,\nabla u)
f(\x)\,, \quad z_j^{\alpha}:=\pa_{x_j}^{\alpha}u^{\s}, z_k^{\beta}:=\pa_{x_k}^{\beta}u^{\s'}  
\end{equation}
for some $j,k=1,\ldots,d$, $\alpha,\beta\in\{0,1\}$ and $\s,\s'\in \{\pm\}$
where we used the notation $u^{+}=u$ and $u^{-}=\bar{u}$.
We have the following.
\begin{lemma}\label{lem:nonomosimbo}
Fix $s_0>d/2$.
For $u\in B_R( H^{s+s_0+1}(\mathbb{T}^{d};\mathbb{C}))$, 
$s\in \mathbb{N}$,
we have
\begin{equation}\label{nonomosimbo2}
|a|_{\mathcal{N}_s^{m}}\lesssim C \|u\|_{H^{s+s_0+1}}\,,
\end{equation}
where $C>0$ is some constant depending on $\|u\|_{H^{s+s_0+1}}$
and bounded from above when $u$ goes to zero.
Moreover,
for any $h\in H^{s+s_0+1}$, the map 
$h\to (\pa_{u}a)(u;x,\x) h$ is a $\mathbb{C}$-linear map from
$ H^{s+s_0+1}$ to $\mathbb{C}$ and satisfies 
\begin{equation}\label{simboStima2}
|(\pa_{u}a) h|_{\mathcal{N}_{s}^{m}}\lesssim C\|h\|_{H^{s+s_0+1}}\,,
\end{equation}
for some constant $C>0$ as above. 
The same holds for $\pa_{\bar{u}}a$.
\end{lemma}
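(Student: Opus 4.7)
The plan is to exploit the product structure $a(x,\xi) = g(x)\,f(\xi)$ with $g(x) := (\partial_{z_j^\alpha z_k^\beta} F)(u(x),\nabla u(x))$ and reduce everything to tame composition estimates on $g$ via Lemma \ref{CompfuncLemma}. Since $x$- and $\xi$-derivatives separate,
\[
\partial_x^{\alpha_1}\partial_\xi^{\beta_1} a(x,\xi) = (\partial_x^{\alpha_1} g)(x)\,(\partial_\xi^{\beta_1} f)(\xi),
\]
so by \eqref{normaSimbo} and the uniform bound $|f|_{\mathcal{N}_s^m}\leq C$ it suffices to control $\|\partial_x^{\alpha_1} g\|_{L^\infty}$ for all $|\alpha_1|\leq s$. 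The Sobolev embedding $H^{s_0}\hookrightarrow L^\infty$, valid since $s_0>d/2$, reduces this in turn to bounding $\|g\|_{H^{s+s_0}}$.

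I would then apply Lemma \ref{CompfuncLemma} to the Nemytskii operator $\tilde G(u) := G(u,\nabla u)$, with $G := \partial_{z_j^\alpha z_k^\beta} F$ and composition parameter $p=1$; its applicability condition $\|u\|_{W^{1,\infty}}<R$ is immediate from $u\in B_R(H^{s+s_0+1})$ and Sobolev embedding. The crucial observation is that since $F$ vanishes to order at least three at the origin, $G$ vanishes at the origin. Setting $u\leftarrow 0$, $h\leftarrow u$ in the Lipschitz estimate of Lemma \ref{CompfuncLemma} therefore yields
\[
\|g\|_{H^{s+s_0}} = \|\tilde G(u)-\tilde G(0)\|_{H^{s+s_0}} \lesssim C(\|u\|_{W^{1,\infty}})\,\|u\|_{H^{s+s_0+1}},
\]
with a constant staying bounded as $u\to 0$. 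Combining this with the $\xi$-side bound proves \eqref{nonomosimbo2}.

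For \eqref{simboStima2} I would compute $\partial_u a$ by the chain rule, using the $\mathbb{C}$-linearity of the Wirtinger derivative $\partial_u$ and the fact that it acts only on the $u$- and $u_{x_\ell}$-slots of $G(u,\nabla u)$:
\[
(\partial_u a)(u;x,\xi)\,h = \Big[(\partial_{u}G)(u,\nabla u)\,h + \sum_{\ell=1}^{d}(\partial_{u_{x_\ell}}G)(u,\nabla u)\,\partial_{x_\ell} h\Big]\,f(\xi),
\]
which is manifestly $\mathbb{C}$-linear in $h$. By Lemma \ref{CompfuncLemma}, each coefficient $(\partial_\bullet G)(u,\nabla u)$ is controlled in $H^{s+s_0}$ by a constant depending only on $\|u\|_{H^{s+s_0+1}}$ (no vanishing is invoked here, since a third derivative of $F$ need not vanish at $0$), while each factor $h$ or $\partial_{x_\ell}h$ lies in $H^{s+s_0}$ with norm $\lesssim \|h\|_{H^{s+s_0+1}}$. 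The Sobolev tame product estimate, valid for $s+s_0>d/2$, then bounds the $H^{s+s_0}$-norm of the bracket by $C\,\|h\|_{H^{s+s_0+1}}$, and the same $\xi$-separation and Sobolev embedding as in the first part convert this into \eqref{simboStima2}. The argument for $\partial_{\bar u} a$ is identical, replacing $(\partial_u G,\partial_{u_{x_\ell}} G)$ by $(\partial_{\bar u} G, \partial_{\bar u_{x_\ell}} G)$.

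The main care point — and the only place the structural assumption on $F$ is genuinely used — is the vanishing of $G$ at the origin, which is what produces the \emph{linear} (rather than merely affine) dependence on $\|u\|_{H^{s+s_0+1}}$ in \eqref{nonomosimbo2}; the $+1$ regularity gap in both estimates is precisely accounted for by the presence of $\nabla u$ in the arguments of $F$.
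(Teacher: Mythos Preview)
Your proposal is correct and follows essentially the same approach as the paper: the paper's proof consists of the single sentence ``It follows by Lemma \ref{CompfuncLemma} applied on the function $(\partial_{z_j^\alpha z_k^\beta} F)(u,\nabla u)f(\xi)$,'' and you have spelled out precisely how that application works --- the $x$/$\xi$ separation, the Sobolev embedding to pass from $L^\infty$ to $H^{s+s_0}$, the use of the vanishing of $G$ at the origin to upgrade the affine Nemytskii bound to a linear one, and the chain-rule plus algebra-property argument for $\partial_u a$.
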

\begin{proof}
It follows by Lemma \ref{CompfuncLemma} applied 
on the function  
$\big( \pa_{z_j^{\alpha} z_k^{\beta}}F\big)(u,\nabla u)f(\x)$,
see \eqref{nonomosimbo}. 
\end{proof}

\section{Paralinearization of NLS}\label{sec:3}
We now paralinearize the Hamiltonian
 nonlinearity $P(u)$ in \eqref{NLSnon}. 
\begin{lemma}\label{product}
Fix $s_0>d/2$ and $0\leq\rho<s-s_0$, $s\geq s_0$.
Consider $u\in H^{s}(\mathbb{T}^{d};\mathbb{C})$.
Then we have that 
\begin{align}
P(u)&=T_{\pa_{u\bar{u}}F}[u]+T_{\pa_{\bar{u}\,\bar{u}}F}[\bar{u}]
\label{paralin1}
\\&+\sum_{j=1}^{d}\Big(  T_{\pa_{\bar{u}u_{x_{j}}}F}[u_{x_j}]
+T_{\pa_{\bar{u}\,\ov{u_{x_{j}}}}F}[\ov{u_{x_j}}] \Big)
-\sum_{j=1}^{d}\pa_{x_j}\Big(  T_{\pa_{{u}\ov{u_{x_{j}}}}F}[u]
+T_{\pa_{\bar{u}\,\ov{u_{x_{j}}}}F}[\bar{u}] \Big)\label{paralin2}\\
&-\sum_{j=1}^{d}\pa_{x_{j}}
\sum_{k=1}^{d}\Big( 
 T_{\pa_{\ov{u_{x_{j}}} \,{u_{x_k}}}F}[u_{x_k}]
+T_{\pa_{\ov{u_{x_{j}}}\, \ov{u_{x_{k}}}}F}[\bar{u_{x_{k}}}]
\Big)+R(u)\,,\label{paralin3}
\end{align}
where  $R(u)$ is a remainder satisfying 
\begin{equation}\label{stimarestopara}
\|R(u)\|_{H^{s+\rho}}\lesssim C\|u\|_{H^{s}}^{2}\,,
\end{equation}
for some constant $C>0$ depending on  $\|u\|_{H^{s}}$ bounded as $u$ goes to $0$.
\end{lemma}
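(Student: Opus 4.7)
The plan is to derive \eqref{paralin1}--\eqref{paralin3} by applying Bony's paralinearization formula separately to the two pieces of $P(u)$ in \eqref{NLSnon}, namely $(\partial_{\bar u}F)(u,\nabla u)$ and $(\partial_{\ov{u_{x_j}}}F)(u,\nabla u)$ for each $j$, and then pulling the divergence derivative $\partial_{x_j}$ outside. The required tool is the following consequence of Lemma \ref{lem:paraproduct} applied to $g(u,\bar u,\nabla u,\ov{\nabla u})$: for $G\in C^\infty(\C^{2d+2};\C)$ vanishing to order $\geq 2$ at the origin and $u\in H^s$ with $s>s_0+1+d/2$,
\begin{equation*}
G(u,\bar u,\nabla u,\ov{\nabla u})=T_{\partial_u G}u+T_{\partial_{\bar u}G}\bar u+\sum_{j=1}^{d}\Big(T_{\partial_{u_{x_j}}G}u_{x_j}+T_{\partial_{\ov{u_{x_j}}}G}\ov{u_{x_j}}\Big)+R_G(u),
\end{equation*}
with $\|R_G(u)\|_{H^{s+\rho}}\lesssim C\|u\|_{H^s}^2$ for $0\leq\rho<s-s_0$. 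This formula is obtained by writing $G=\int_0^1\frac{d}{dt}G(tu,\ldots)\,dt$, distributing the integrand as a product of a smooth function of $u$ times $u$ (or its conjugate/derivative), and then using the paraproduct identity \eqref{eq:paraproduct}; the two ``symmetric'' paraproduct terms are absorbed into the symbols $\partial_z G(u,\nabla u)$, while the remainder $\mathcal{R}(f,g)$ from \eqref{eq:paraproduct22} is $\rho$-smoothing. The tame bounds on the symbols $\partial_z G(u,\nabla u)$ follow from Lemma \ref{CompfuncLemma}, and the quadratic factor $\|u\|_{H^s}^2$ is produced by the vanishing of $G$ to order $2$.

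With this machinery in place, I would first apply the formula with $G=\partial_{\bar u}F(u,\nabla u)$: since $\partial_{u}(\partial_{\bar u}F)=\partial_{u\bar u}F$, $\partial_{\bar u}(\partial_{\bar u}F)=\partial_{\bar u\bar u}F$, $\partial_{u_{x_j}}(\partial_{\bar u}F)=\partial_{\bar u u_{x_j}}F$ and $\partial_{\ov{u_{x_j}}}(\partial_{\bar u}F)=\partial_{\bar u\,\ov{u_{x_j}}}F$, we immediately recover the line \eqref{paralin1} together with the first two summands in \eqref{paralin2}, plus a remainder of the desired size. Second, I would apply the same formula with $G=\partial_{\ov{u_{x_j}}}F(u,\nabla u)$ for each $j=1,\dots,d$; this produces
\begin{equation*}
(\partial_{\ov{u_{x_j}}}F)(u,\nabla u)=T_{\partial_{u\,\ov{u_{x_j}}}F}u+T_{\partial_{\bar u\,\ov{u_{x_j}}}F}\bar u+\sum_{k=1}^d\Big(T_{\partial_{\ov{u_{x_j}}}\,\partial_{u_{x_k}}F}u_{x_k}+T_{\partial_{\ov{u_{x_j}}}\,\partial_{\ov{u_{x_k}}}F}\ov{u_{x_k}}\Big)+R_j(u),
\end{equation*}
and then apply $-\partial_{x_j}$ and sum over $j$ to reconstruct the remaining terms in \eqref{paralin2} and all of \eqref{paralin3}.

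The main technical point is the remainder estimate \eqref{stimarestopara}: the divergence structure forces us to lose one derivative on $\partial_{x_j}R_j$, so I would invoke Bony's formula applied to $G=\partial_{\ov{u_{x_j}}}F(u,\nabla u)$ with regularity budget $\rho+1$ instead of $\rho$, i.e.\ bound $\|R_j\|_{H^{s+\rho+1}}\lesssim C\|u\|_{H^s}^2$, so that $\|\partial_{x_j}R_j\|_{H^{s+\rho}}\lesssim C\|u\|_{H^s}^2$ as required. This demands $\rho+1<s-s_0$, which is consistent with the hypothesis $0\leq\rho<s-s_0$ up to a harmless adjustment of $s_0$. The constant $C$ depending tamely on $\|u\|_{H^s}$ comes from applying Lemma \ref{CompfuncLemma} to the smooth functions $(\partial^2 F)(u,\nabla u)$ entering the symbols.

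The main obstacle I foresee is keeping careful track of the book-keeping between the paraproduct remainder $\mathcal{R}(f,g)$ of Lemma \ref{lem:paraproduct} and the Bony smoothing remainder (one must combine the paraproduct remainders arising from the Taylor-in-$t$ expansion of $G(tu,\ldots)$ with a further paralinearization of $\partial_z G(u,\nabla u)$ against $u$), and verifying that the derivative loss from the outer $\partial_{x_j}$ is compensated by the gain in Bony's formula. Everything else is a direct matter of assembling terms and applying Lemmas \ref{CompfuncLemma} and \ref{lem:nonomosimbo} to bound the resulting non-homogeneous symbols.
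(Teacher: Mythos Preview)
Your approach is correct and is essentially the same as the paper's: the paper's proof consists of a single sentence invoking the Bony paralinearization formula (via Lemma~\ref{lem:paraproduct} and the references \cite{Metivier}, \cite{Tay-Para}), and what you have written is precisely a detailed unpacking of that invocation, treating the two pieces $(\partial_{\bar u}F)(u,\nabla u)$ and $(\partial_{\ov{u_{x_j}}}F)(u,\nabla u)$ separately and pulling $\partial_{x_j}$ outside. Your observation about the derivative loss in the divergence term and the need to apply Bony's formula with an extra unit of regularity budget is a legitimate technical point that the paper simply leaves implicit in its citation of the standard references.
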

\begin{proof}
The \eqref{paralin1}-\eqref{paralin3} 
follow by the Bony para-linearization formula, 
see Lemma \ref{lem:paraproduct}
(see also \cite{Metivier}, \cite{Tay-Para}).
\end{proof}
We now rewrite the equation \eqref{QLNLS}
as a para-differential system.
Let us introduce the symbols
\begin{equation}\label{simboa2}
\begin{aligned}
&a_2(x,\x):=a_{2}(U;x,\x)
:=\sum_{j,k=1}^{d}(\pa_{\bar{u_{x_k}} u_{x_{j}}}F) \x_{j}\x_{k}\,,
\\&
b_2(x,\x):=b_{2}(U;x,\x)
:=\sum_{j,k=1}^{d}(\pa_{\bar{u_{x_k}}\, \bar{u_{x_{j}}}}F) \x_{j}\x_{k}\,,\\
&a_1(x,\x):=a_{1}(U;x,\x):=\frac{\ii}{2}\sum_{j=1}^{d}\Big(
(\pa_{\ov{u} u_{x_{j}}}F)-(\pa_{{u} \ov{u_{x_{j}}}}F)
\Big)\x_{j}\,,
\end{aligned}
\end{equation}
where $F=F(u,\nabla u)$ in \eqref{HamNLS}.

\begin{lemma}\label{realtaAAA}
One has that
\begin{equation}\label{realtaAAA1}
a_2(x,\x)=\ov{a_2(x,\x)}\,,\quad
a_1(x,\x)=\ov{a_1(x,\x)}\,, \quad a_1(x,-\x)=-a_1(x,\x)\,,
\quad a_2(x,-\x)=a_2(x,\x)\,,
\end{equation}
\begin{equation}\label{realtaAAA2}
\begin{aligned}
|a_{2}|_{\mathcal{N}_p^{2}}+
|b_{2}|_{\mathcal{N}_p^{2}}
+|a_{1}|_{\mathcal{N}_p^{1}}&\lesssim C\|u\|_{H^{p+s_0+1}}\,,
\quad \forall \, p+s_0\leq s\,,\,\quad p\in \mathbb{N}\,,
\end{aligned}
\end{equation}
for some constant $C>0$ depending on  $\|u\|_{H^{p+s_0+1}}$ bounded as $u$ goes to $0$.
\end{lemma}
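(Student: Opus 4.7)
The lemma splits into two independent parts: the algebraic identities \eqref{realtaAAA1} and the quantitative seminorm bound \eqref{realtaAAA2}. Neither part should present serious difficulty; the whole statement is essentially a bookkeeping corollary of the definitions combined with Lemma \ref{lem:nonomosimbo}.

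For the reality identities, the key observation is that since $F$ is real-valued (as a function of $\Re(y_j),\Im(y_j)$), the Wirtinger derivatives satisfy $\overline{\pa_{y_j} F} = \pa_{\bar{y_j}} F$, and hence iteratively $\overline{\pa_{y_j}\pa_{y_k}F} = \pa_{\bar{y_j}}\pa_{\bar{y_k}}F$ and $\overline{\pa_{y_j}\pa_{\bar{y_k}}F} = \pa_{\bar{y_j}}\pa_{y_k}F$. Applied to $a_2$ in \eqref{simboa2} this gives
\[
\ov{a_2(x,\x)}=\sum_{j,k=1}^{d}\ov{\pa_{\ov{u_{x_k}}}\pa_{u_{x_j}}F}\,\x_j\x_k
=\sum_{j,k=1}^{d}\pa_{u_{x_k}}\pa_{\ov{u_{x_j}}}F\,\x_j\x_k,
\]
and relabelling $j\leftrightarrow k$ recovers $a_2(x,\x)$. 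For $a_1$ the factor $\ii/2$ becomes $-\ii/2$ under conjugation, and the two terms inside the bracket exchange roles, producing a second sign change that restores $a_1$. The parity statements in $\x$ are then trivial: $a_1$ is linear in $\x$ so $a_1(x,-\x)=-a_1(x,\x)$, while $a_2,b_2$ are quadratic in $\x$ so they are even.

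For the seminorm estimates \eqref{realtaAAA2}, each summand of $a_1$, $a_2$, $b_2$ has exactly the form of the template \eqref{nonomosimbo}: a coefficient $(\pa_{z_j^\alpha z_k^\beta}F)(u,\nabla u)$ multiplied by a polynomial $f(\x)$ of degree $1$ or $2$, which trivially belongs to $\mathcal{N}_p^1$ or $\mathcal{N}_p^2$ with norm bounded by a dimensional constant. Hence Lemma \ref{lem:nonomosimbo} applies to each summand and gives a bound of the form $C\|u\|_{H^{p+s_0+1}}$ with $C$ depending on $\|u\|_{H^{p+s_0+1}}$ and bounded as $u\to 0$. Summing over the finitely many indices $j,k\in\{1,\dots,d\}$ yields \eqref{realtaAAA2}. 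If there is any subtle point at all, it is simply to ensure the constant $C$ in the final estimate inherits the ``bounded as $u\to 0$'' property, but this is built into the statement of Lemma \ref{lem:nonomosimbo}.
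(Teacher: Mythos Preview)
Your proposal is correct and follows exactly the same approach as the paper, which simply states that \eqref{realtaAAA1} ``follows by direct inspection using \eqref{simboa2}'' and that \eqref{realtaAAA2} ``follows by Lemma \ref{lem:nonomosimbo}.'' You have merely unpacked these two sentences in detail, spelling out the Wirtinger-conjugation argument for reality, the degree count for parity, and the termwise application of Lemma \ref{lem:nonomosimbo} for the seminorm bound.
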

\begin{proof}
The \eqref{realtaAAA1} follows by direct inspection using \eqref{simboa2}.
The \eqref{realtaAAA2} follows by Lemma \ref{lem:nonomosimbo}.
\end{proof}

The following holds true.

\begin{proposition}{\bf (Paralinearization of NLS).}\label{NLSparapara}
We have that
the equation \eqref{QLNLS}
is equivalent to the following system (recall  \eqref{notaPara}):
\begin{equation}\label{QLNLS444}
\dot{U}=\ii E\opbw\big(|\x|^{2}\uno +A_{2}(x,\x)+A_{1}(x,\x)\big)U+
R(U)U\,,
\end{equation}
where 
\begin{equation*}
U:=\vect{u}{\bar{u}}\,,\quad  E:=\sm{1}{0}{0}{-1}\,,\quad
\uno:=\sm{1}{0}{0}{1}\,,
\end{equation*}
the matrices $A_2(x,\x)=A_2(U;x,\x)$,
$A_1(x,\x)=A_1(U;x,\x)$  have the form
\begin{equation}\label{matriceA2}
A_2(x,\x):=\left(\begin{matrix}a_2(x,\x) & b_{2}(x,\x) \vspace{0.2em}\\
\ov{b_{2}(x,-\x)} & {a_{2}(x,\x)} \end{matrix}\right)\,,
\qquad
A_1(x,\x):=\left(\begin{matrix}a_1(x,\x) & 0 \vspace{0.2em}\\
0 & \ov{a_{1}(x,-\x)} \end{matrix}\right)
\end{equation}
and  $a_2,a_1,b_2$ are  the symbol in \eqref{simboa2}.
The operators $\opbw(A_{i}(x,\x))$ are self-adjoint (see \eqref{selfFFFF}).
The remainder $R(U)$ is a $2\times2$ matrix of operators (see \eqref{vinello2})
which is \emph{real-to-real}, i.e. satisfies \eqref{vinello}.
Moreover, for any $s\geq2(d+1)+3$ and any $ U,V\in H^{s}(\mathbb{T}^{d};\mathbb{C}^{2}) $, 
it satisfies the estimates 
\begin{align}
\|R(U)U\|_{H^{s}}&\lesssim C\|U\|_{H^{s}}^{2}\,,\qquad C:=C(\|U\|_{H^{s}})\,,\label{stimaRRR}\\
\|R(U)[U]-R(V)[V]\|_{H^{s}}&\lesssim
 C_2 (\| U\|_{H^{s}}+\| V\|_{H^{s}})\|U-V\|_{H^{s}}\,, \;\; C_2:=C_2(\|U\|_{H^{s}},\|V\|_{H^{s}})\,,\label{nave101}
 \end{align}
for some constants $C,C_2>0$  bounded as $u$ and $v$ go to $0$.
\end{proposition}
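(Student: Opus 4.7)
The plan is to start from the Bony paralinearization of $P(u)$ provided by Lemma \ref{product} and rewrite every term on its right-hand side as a Weyl-quantized paradifferential operator of the form $\opbw(a(x,\x))$ with $a\in \mathcal{N}_{s}^{m}$ for a suitable order $m\in\{0,1,2\}$, collecting into $R(U)U$ any term which is order $\le 0$ as a Weyl operator or which enjoys a smoothing gain. The original equation $\ii u_t = -\opbw(|\x|^{2})u - P(u)$, together with its complex conjugate for $\bar u$, then takes exactly the matrix form \eqref{QLNLS444}.

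The central computation is to treat the two types of compound expressions appearing in \eqref{paralin2}--\eqref{paralin3}: (i) an expression of the form $T_c[\pa_{x_j}v]$ I would rewrite as $\opbw(c)\circ\opbw(\ii\x_j)$ applied to $v$ and expand via Proposition \ref{prop:compo} to obtain $\opbw(\ii\x_j c)v$ modulo an operator losing one derivative; (ii) an expression $\pa_{x_j}(T_c[v])$ I would rewrite as $\opbw(\ii\x_j)\circ\opbw(c)$ and expand similarly. The order-2 contribution then comes only from the doubly-differentiated terms in \eqref{paralin3}, summing over $j,k$ to produce the leading symbols $\sum_{j,k}\x_j\x_k\pa_{\ov{u_{x_j}}u_{x_k}}F = a_2(x,\x)$ acting on $u$ and $\sum_{j,k}\x_j\x_k\pa_{\ov{u_{x_j}}\ov{u_{x_k}}}F = b_2(x,\x)$ acting on $\bar u$. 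First-order contributions arise partly from the single-derivative terms in \eqref{paralin2} and partly from the Poisson-bracket subprincipal terms produced by Proposition \ref{prop:compo} applied to the lines of \eqref{paralin3}; a direct calculation of these brackets, using the symmetry of $F$ under interchange of the $u_{x_j}$ slots, shows that they organize exactly into $a_1(x,\x)$ on $u$ and $\ov{a_1(x,-\x)}$ on $\bar u$. Line \eqref{paralin1} is already of order $0$ and is directly absorbed into $R(U)U$, together with all the residual remainders produced by Lemma \ref{product} and Proposition \ref{prop:compo}.

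The algebraic properties follow from Lemma \ref{realtaAAA} and Remark \ref{simboMatr}. Writing the equation for $\bar u$ by complex conjugation (equivalently, using that $F$ is real in the sense that $\pa_{z_j z_k}F$ and $\pa_{\bar z_j \bar z_k}F$ are conjugates) yields precisely the block structure \eqref{matriceA2}, which is manifestly real-to-real in the sense of Definition \ref{riassunto-mappe}. Self-adjointness of $\opbw(A_i(x,\x))$ reduces, by Remark \ref{simboMatr}, to the symmetry identities \eqref{simboAggiunto2}; these in turn follow from $a_1,a_2$ being real-valued and from the parity relations $a_1(x,-\x)=-a_1(x,\x)$, $a_2(x,-\x)=a_2(x,\x)$, $b_2(x,-\x)=b_2(x,\x)$ recorded in Lemma \ref{realtaAAA}.

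Finally I would estimate $R(U)$. The remainder is a finite sum of three kinds of terms: (a) the smoothing remainder from Lemma \ref{product}, bounded by \eqref{stimarestopara}; (b) composition remainders from Proposition \ref{prop:compo}, bounded by \eqref{composit2} or \eqref{composit3}; and (c) genuine order-$0$ paradifferential operators $\opbw(a)$ with $a\in\mathcal{N}_s^0$, controlled by Lemma \ref{azioneSimboo}(ii). In each case Lemma \ref{lem:nonomosimbo} applied to the coefficient symbols (which depend on $u$ through compositions of $F$-derivatives with $u$ and $\nabla u$) gives a factor $\|U\|_{H^{s}}$, and the paraproduct acts on a second copy of $U$, yielding the quadratic bound \eqref{stimaRRR}. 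For the Lipschitz estimate \eqref{nave101} I would write $R(U)U - R(V)V = (R(U)-R(V))U + R(V)(U-V)$ and then estimate the symbol differences via the fundamental theorem of calculus, using \eqref{simboStima2} (and its $\pa_{\bar u}$-analog) to obtain $|a(U)-a(V)|_{\mathcal{N}_s^{m}}\lesssim C\|U-V\|_{H^{s+s_0+1}}$. The main technical obstacle in this whole program is not a single hard inequality but the careful bookkeeping: matching the explicit first-order terms of \eqref{paralin2} with the subprincipal symbols that Proposition \ref{prop:compo} produces from \eqref{paralin3}, and simultaneously preserving the real-to-real and self-adjoint structure, so that at the end the residue is exactly of order $0$ in $\x$ and quadratic in $U$.
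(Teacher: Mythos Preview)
Your proposal is correct and follows essentially the same route as the paper's proof. Both start from Lemma \ref{product}, rewrite $\pa_{x_j}=\opbw(\ii\x_j)$, expand the compositions via Proposition \ref{prop:compo}, identify the symbols $a_2,b_2,a_1$ of \eqref{simboa2}, and collect order-$0$ pieces and composition remainders into $R(U)U$; the algebraic structure (real-to-real, self-adjoint) is read off Lemma \ref{realtaAAA} and Remark \ref{simboMatr} in both cases. One clarification worth making explicit, which the paper carries out in detail: the Poisson-bracket subprincipal terms that Proposition \ref{prop:compo} produces from the order-$2$ lines \eqref{paralin3} actually \emph{cancel} by the symmetry $\pa_{\ov{u_{x_j}}u_{x_k}}F=\pa_{\ov{u_{x_k}}u_{x_j}}F$ of the Hessian (and similarly for the $b_2$ block), so the first-order symbol $a_1$ comes entirely from \eqref{paralin2}; your phrasing ``partly from \ldots and partly from \ldots'' is not wrong but slightly obscures this cancellation.
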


\begin{proof}
We start by noting that
\begin{equation}\label{facile}
\pa_{x_{j}}=\opbw(\ii\x_{j})\,,\;\; j=1,\ldots \,d\,,
\end{equation}
and that the quantization of  a symbol $a(x)$ is given by $\opbw(a(x))$.
We also remark that
the symbols 
appearing in \eqref{paralin1}, \eqref{paralin2} and \eqref{paralin3}
can be estimated  (in the norm $|\cdot|_{\mathcal{N}_{s}^{m}}$)
by using Lemma \ref{realtaAAA}.
Consider now the first para-differential term in \eqref{paralin3}. 
We have, for any $j,k=1,\ldots,d$,
\[
\pa_{x_{j}}
 T_{\pa_{\ov{u_{x_{j}}} \,{u_{x_k}}}F}\pa_{x_k}u=
 \opbw(\ii\x_{j})\circ\opbw(\pa_{\ov{u_{x_{j}}} \,{u_{x_k}}}F)\circ\opbw(\ii \x_k)u\,.
\]
By applying 
Proposition \ref{prop:compo} 
and recalling the 
Poisson  bracket in \eqref{PoissonBra},
we deduce 
\begin{align}
 \opbw(\ii\x_{j})\,\circ&\,\opbw(\pa_{\ov{u_{x_{j}}} \,{u_{x_k}}}F)\circ\opbw(\ii \x_k)
=
\opbw\big( -\x_{j}\x_{k}\pa_{\ov{u_{x_{j}}} \,{u_{x_k}}}F \big)
\\&
 +\opbw\Big(
  \frac{\ii}{2 } \x_{k}\pa_{x_{j}}(\pa_{\ov{u_{x_{j}}} \,{u_{x_k}}}F)
 -\frac{\ii\x_{j}}{2}\pa_{x_{k}}(\pa_{\ov{u_{x_{j}}} \,{u_{x_k}}}F)\Big)
    \\&+\widetilde{R}^{(1)}_{j,k}(u) +\widetilde{R}^{(2)}_{j,k}(u)\,,
\end{align}
where $\widetilde{R}^{(1)}_{j,k}(u):=
\opbw\big( 
 -\frac{1}{4}\pa_{{x_{j}}{x_{k}}}(\pa_{\ov{u_{x_{j}}} \,{u_{x_k}}}F)\big)$
 and $\widetilde{R}^{(2)}_{j,k}(u)$ is some bounded operator.
 More precisely, using \eqref{composit2}, \eqref{actionSob} 
 and the estimates given by Lemma \ref{lem:nonomosimbo},
 we have, $\forall \, h\in H^{s}(\mathbb{T}^{d};\mathbb{C})$,
\begin{equation}\label{crociate}
\|\widetilde{R}^{(2)}_{j,k}(u)h\|_{H^{s}}\lesssim C\|h\|_{H^{s}}\|u\|_{H^{s}}\,,\qquad 
\|\widetilde{R}^{(1)}_{j,k}(u)h\|_{H^{s}}\lesssim C\|h\|_{H^{s}}\|u\|_{H^{2s_0+3}}\,,
\end{equation}
for some constant $C>0$ depending on  
$\|u\|_{H^{s}}$ bounded as $u$ goes to $0$, with $s_0\geq d+1$, 
$s_0\in\mathbb{N}$.
We set
\[
\widetilde{R}(u):=\sum_{j,k=1}^{d}\Big(\widetilde{R}^{(1)}_{j,k}(u)
+\widetilde{R}^{(2)}_{j,k}(u)\Big)\,.
\]
Then 
\[
\begin{aligned}
-\sum_{j,k=1}^{d}\pa_{x_j} &T_{\pa_{\ov{u_{x_{j}}} \,{u_{x_k}}}F}\pa_{x_{k}}u=
\opbw\Big(\sum_{j,k=1}^{d}\x_{j}\x_{k}\pa_{\ov{u_{x_{j}}} \,{u_{x_k}}}F\Big)+
\widetilde{R}(u)
\\&
-\frac{\ii}{2}\opbw\Big(
\sum_{j,k=1}^{d}\Big( -\x_{j}\pa_{x_{k}}(\pa_{\ov{u_{x_{j}}} \,{u_{x_k}}}F)
+ \x_{k}\pa_{x_{j}}(\pa_{\ov{u_{x_{j}}} \,{u_{x_k}}}F)
\Big)\Big)
\\&
\stackrel{\eqref{simboa2}}{=}
\opbw(a_2(x,\x))+\widetilde{R}(u)
+\frac{\ii}{2}\opbw\Big(\sum_{j,k=1}^{d} \x_{j}\pa_{x_{k}}\Big(
(\pa_{\ov{u_{x_{j}}} \,{u_{x_k}}}F)-(\pa_{\ov{u_{x_{k}}} \,{u_{x_j}}}F)\Big)
\Big)\\
&
=\opbw(a_2(x,\x))+\widetilde{R}(u)\,,
\end{aligned}
\]
where we used the symmetry of the matrix $\pa_{\ov{\nabla u}\, \nabla u}F$ (recall $F$ is real).
By performing similar explicit computations on the other summands in \eqref{paralin1}-\eqref{paralin3}
we get the 
\eqref{QLNLS444}, \eqref{matriceA2} with symbols in 
\eqref{simboa2}.
By the discussion above we deduced that the remainder $R(U)$ in \eqref{QLNLS444}
satisfies the bound \eqref{stimaRRR}.
The estimate \eqref{nave101} can be deduced by reasoning as in Lemma $4.5$
in \cite{Feola-Iandoli-Loc}.
\end{proof}

\section{Basic energy estimates}\label{sec:4}
Fix $s_0\geq d+1$, $s_0\in\mathbb{N}$, $s\geq 2s_0+7$,
$T>0$, and consider a function $u$ such that
\begin{equation}\label{regularity}
\begin{aligned}
&u\in L^{\infty}([0,T); H^{s}(\mathbb{T}^{d}; \mathbb{C}))\cap
Lip([0,T); H^{s-2}(\mathbb{T}^{d}; \mathbb{C}))\,,
\\
& \sup_{t\in[0,T)}\|u(t)\|_{H^{2s_0+7}}+
\sup_{t\in[0,T)}\|\pa_{t}u(t)\|_{H^{2s_0+5}}
\leq \mathtt{r}\,,
\end{aligned}
\end{equation}
for some $\mathtt{r}>0$.
Let $U:=\vect{u}{\bar{u}}\in \mathcal{U}$  (recall \eqref{spazioUU}).
Consider the system
\begin{equation}\label{QLNLSV}
\left\{\begin{aligned}
&\dot{V}=\ii E\opbw\big(|\x|^{2}\uno +A_{2}(x,\x)+A_{1}(x,\x)\big)V\,,\\
&V(0)=V_0:=U(0)\,,
\end{aligned}\right.
\end{equation}
where $A_i$, $i=1,2$, are the matrices of symbols given by Proposition \ref{NLSparapara}.
We shall provide \emph{a priori} energy estimates for the equation \eqref{QLNLSV}.

\begin{theorem}{\bf (Energy estimates).}\label{StimeEnergia}
Assume \eqref{regularity}. 
Then for $s\geq 2s_0+7$ the following holds. 
If a function $V=\vect{v}{\bar{v}}\in \mathcal{U}$, with
$v\in L^{\infty}([0,T); H^{s}(\mathbb{T}^{d}; \mathbb{C}))\cap
Lip([0,T); H^{s-2}(\mathbb{T}^{d}; \mathbb{C}))$
solves the problem \eqref{QLNLSV}, with initial condition $v(0)\in H^{s}(\mathbb{T}^{d};\mathbb{C})$, 
then one has
\begin{equation}\label{StimeEnergia2}
\|v(t)\|_{H^{s}}^{2}\lesssim_{\mathtt{r}} \|v(0)\|_{H^{s}}^{2}+\int_{0}^{t}C\|u(\s)\|_{H^{s}}
\|v(\s)\|^{2}_{H^{s}}d\s\,,
\quad \mbox{for almost every}\, t\in[0,T)\,,
\end{equation}
for some $C>0$ depending on 
$\|u\|_{H^{s}}$, 
and bounded from above as
$\|u\|_{H^{s}}$ goes to zero.
\end{theorem}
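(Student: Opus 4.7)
The plan is to construct an equivalent energy norm on $H^s$ whose time derivative along solutions of \eqref{QLNLSV} is bounded by $C\|u\|_{H^s}\|v\|_{H^s}^2$, from which \eqref{StimeEnergia2} follows by Gronwall. The key step is to \emph{diagonalize} the principal part of the operator $\ii E\opbw(|\x|^{2}\uno+A_{2}+A_{1})$ by conjugating with a paradifferential transformation, thereby reducing the system to two decoupled scalar Schr\"odinger-type equations with self-adjoint principal part, where standard energy methods apply.

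For the block-diagonalization at order $2$, the off-diagonal coupling comes from $b_2$ (see \eqref{matriceA2}); I would seek a real-to-real matrix symbol $\Phi(x,\x)=\uno+\Phi_0(x,\x)$ of order $0$ (in the sense of Remarks \ref{rmk:Real} and \ref{simboMatr}) such that $\opbw(\Phi)^{-1}\ii E\opbw(|\x|^{2}\uno+A_2)\opbw(\Phi)$ is diagonal at principal order. The eigenvalues of the symbol are $\lambda_\pm(x,\x)=\pm\sqrt{(|\x|^2+a_2)^2-|b_2|^2}$, and the ellipticity condition \eqref{ellipthypo2} ensures that $\lambda_\pm\sim\pm|\x|^2$ for $|\x|$ large, making the explicit construction of $\Phi$ straightforward. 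To treat large data without any smallness assumption, I would localize $\Phi_0$ on high frequencies via the cutoff $\mathcal{X}_\mathtt{R}(\x)$ from \eqref{cutofftheta} with $\mathtt{R}=\mathtt{R}(\mathtt{r})$ chosen large enough that (i) ellipticity dominates on $\{|\x|\geq\mathtt{R}\}$ and (ii) Lemma \ref{azioneSimboo}(iv) makes $\|\opbw(\Phi_0)\|_{\mathcal{L}(H^s)}$ arbitrarily small, so that $\opbw(\Phi)$ is invertible by Neumann series uniformly in $u\in B_\mathtt{r}$. A further conjugation by an analogous cutoff-localized order $-1$ symbol eliminates the off-diagonal terms of order $1$ generated by the first step, leaving only bounded ($L^2$-continuous) off-diagonal remainders.

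After these two conjugations one obtains a new variable $W=\opbw(\Psi(U;x,\x))V$ with $\|W(t)\|_{H^s}\sim\|v(t)\|_{H^s}$, satisfying
\begin{equation*}
\dot W = \ii E\opbw\big(\mathrm{diag}(\lambda_+,\lambda_-)\big)W + \mathcal{R}(U)W,
\end{equation*}
where $\opbw(\lambda_\pm)$ are self-adjoint (by \eqref{simboAggiunto2}) and $\|\mathcal{R}(U)\|_{\mathcal{L}(H^s)}\lesssim C\|u\|_{H^s}$. Setting $\mathcal{E}_s(t):=\|W(t)\|_{H^s}^2$ and differentiating, self-adjointness of $\opbw(\lambda_\pm)$ kills the leading contribution; the commutator $[\langle D\rangle^s,\opbw(\lambda_\pm)]$, whose principal symbol $\tfrac{1}{\ii}\{\langle\x\rangle^s,\lambda_\pm\}$ is purely imaginary (by Proposition \ref{prop:compo} and the reality of $\lambda_\pm$), yields upon a standard symmetrization a contribution $\lesssim C\|u\|_{H^s}\mathcal{E}_s(t)$; the remainder $\mathcal{R}(U)W$ contributes the same by Cauchy--Schwarz; the time derivative falling on $\Psi(U)$ is controlled using $\pa_t u\in L^\infty H^{s-2}$ from \eqref{regularity}. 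The main obstacle is the uniform-in-data construction of $\Phi$ sketched above: without the microlocal cutoff $\mathcal{X}_\mathtt{R}$, invertibility of $\opbw(\Phi)$ would require smallness of $\mathtt{r}$, and the quantitative tuning $\mathtt{R}=\mathtt{R}(\mathtt{r})$ combining high-frequency ellipticity with Lemma \ref{azioneSimboo}(iv) is the mechanism that makes the argument work for arbitrary initial data.
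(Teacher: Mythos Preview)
Your block-diagonalization strategy is essentially the one carried out in the paper (Propositions \ref{prop:block} and \ref{prop:blockord1}), including the use of the cutoff $\mathcal{X}_{\mathtt{R}}$ with $\mathtt{R}=\mathtt{R}(\mathtt{r})$ to secure invertibility without smallness. That part is fine.

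The gap is in the energy step after diagonalization. You propose to differentiate $\mathcal{E}_s(t)=\|W(t)\|_{H^s}^2=\|\langle D\rangle^s W\|_{L^2}^2$ and control the commutator $[\langle D\rangle^s,\opbw(\lambda_\pm)]$ ``by a standard symmetrization'' because its principal symbol $\tfrac{1}{\ii}\{\langle\x\rangle^s,\lambda_\pm\}$ is purely imaginary. This does not close. The commutator is of order $s+1$ (its Poisson bracket $\{\langle\x\rangle^s,a_2^{(1)}\}$ is a \emph{nonzero} real symbol of order $s+1$), so $\ii[\langle D\rangle^s,\opbw(\lambda_\pm)]\langle D\rangle^{-s}$ has \emph{real} principal symbol of order $1$; its symmetric part $B+B^*$ is therefore still of order $1$, and $\Re(B\langle D\rangle^s w,\langle D\rangle^s w)$ is not bounded by $\|w\|_{H^s}^2$. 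Purely imaginary principal symbol of the commutator means the commutator is skew-adjoint to leading order, but in the energy identity you are pairing $w$ with $\langle D\rangle^s w$, not $w$ with $w$, so this skew-adjointness buys nothing. This derivative loss is precisely the obstruction that distinguishes quasilinear Schr\"odinger from first-order hyperbolic systems.

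The paper resolves this by replacing $\langle D\rangle^s$ with the \emph{variable-coefficient} weight $T_{(1+\mathcal{L})^{s/2}}$, where $\mathcal{L}=|\x|^2+a_2^{(1)}$ is the diagonalized principal symbol (see \eqref{LLL1}, \eqref{def:VVTL}). The point is that $\{(1+\mathcal{L})^{\gamma},\mathcal{L}\}=0$ identically, so by Proposition \ref{prop:compo} the commutator $[T_{(1+\mathcal{L})^{\gamma}},T_{\mathcal{L}}]$ gains \emph{two} orders instead of one (Lemma \ref{propLLL}$(ii)$, estimate \eqref{stimaTLtris}), mapping $H^s$ into $L^2$. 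With this modified energy $\|w_\gamma\|_{L^2}^2$, equivalent to $\|w\|_{H^s}^2$ by Lemma \ref{bowie:lemm}, the computation you sketch goes through: the leading term $\Re(\ii T_{\mathcal{L}}w_\gamma,w_\gamma)_{L^2}$ vanishes by self-adjointness, and all remaining contributions are genuinely bounded by $C\|u\|_{H^s}\|v\|_{H^s}^2$.
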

The proof of the Theorem  above requires some preliminary results
which will be proved in the following subsections.

\subsection{Block-diagonalization}
The aim of this section is to block-diagonalize system \eqref{QLNLSV}
up to bounded remainders.
This will be achieved into two steps.
In the following, for simplicity, 
sometimes 
we  omit the dependence on $(x,\x)$ from the symbols.

\subsubsection{Block-diagonalization at highest order}
Consider the matrix of symbols
\begin{equation}\label{A2tilde}
\begin{aligned}
&E(\uno+\widetilde{A}_{2,\mathtt{R}}(x,\x))\,,\qquad 
\widetilde{A}_{2,\mathtt{R}}(x,\x)
:=
\left(\begin{matrix}\widetilde{a}_{2,\mathtt{R}}(x,\x) & 
\widetilde{b}_{2,\mathtt{R}}(x,\x) \vspace{0.2em}\\
\ov{\widetilde{b}_{2,\mathtt{R}}(x,-\x)} & 
{\widetilde{a}_{2,\mathtt{R}}(x,\x)} \end{matrix}\right)\,,\\
&\widetilde{a}_{2,\mathtt{R}}(x,\x):=|\x|^{-2}a_2(x,\x)\mathcal{X}_{\mathtt{R}}(\x)\,,\quad
\widetilde{b}_{2,\mathtt{R}}(x,\x):=|\x|^{-2}b_2(x,\x)\mathcal{X}_{\mathtt{R}}(\x)\,,
\end{aligned}
\end{equation}
where $a_2(x,\xi)$ and $b_2(x,\xi)$ are defined in \eqref{simboa2}
and $\mathcal{X}_{\mathtt{R}}(\x)$ is the cut-off function in  \eqref{cutofftheta}.
Note that the symbols above are well-defined thanks to the cut-off function $\mathcal{X}_{\mathtt{R}}(\x)$.

Define 
\begin{equation}\label{nuovadiag}
\begin{aligned}
\lambda_{2}(x,\x)&:= \sqrt{(1+\widetilde{a}_{2,\mathtt{R}}(x,\x))^{2}
-|\widetilde{b}_{2,\mathtt{R}}(x,\x)|^{2}},
\qquad \widetilde{a}_{2, \mathtt{R}}^{+}(x,\x):=\lambda_{2}(x,\x)-1.
\end{aligned}
\end{equation}
Notice that the symbol $\lambda_{2}$ is well-defined by Hypothesis \ref{hyp1}.
The matrix of the normalized eigenvectors associated to the eigenvalues of 
$E(\uno+\widetilde{A}_{2,\mathtt{R}}(x,\x))$
is 
\begin{equation}\label{transC}
\begin{aligned}
S(x,\x)&:=\left(\begin{matrix} {s}_1(x,\x) & {s}_2(x,\x)\vspace{0.2em}\\
{\ov{s_2(x,\x)}} & {{s_1(x,\x)}}
\end{matrix}
\right)\,,
\qquad
S^{-1}(x,\x):=\left(\begin{matrix} {s}_1(x,\x) & -{s}_2(x,\x)\vspace{0.2em}\\
-{\ov{s_2(x,\x)}} & {{s_1(x,\x)}}
\end{matrix}
\right)\,,
\\
s_{1}&:=\frac{1+\widetilde{a}_{2,\mathtt{R}}
+\lambda_{2}}{\sqrt{2\lambda_{2}\big(1+\widetilde{a}_{2,\mathtt{R}}+
\lambda_{2}\big) }},
\qquad s_{2}:=\frac{-\widetilde{b}_{2,\mathtt{R}}}{\sqrt{2\lambda_{2}\big(1+\widetilde{a}_{2,\mathtt{R}}
+\lambda_{2}\big) }}\,.
\end{aligned}
\end{equation}
Let us also define  (recall \eqref{PoissonBra}, \eqref{PoissonBra22})
\begin{equation}\label{simboliAA11}
\begin{aligned}
S_1(x,\x)&:=
\left(
\begin{matrix}
s_{1}^{(1)}(x,\x) & s_{2}^{(1)}(x,\x) \vspace{0.2em}\\ 
\ov{s_{2}^{(1)}(x,-\x) } & 
{s_{1}^{(1)}(x,-\x)}
\end{matrix}
\right)\\
&:=\frac{1}{2\ii}\left( 
\begin{matrix}  \{s_{2},\overline{s_{2}}\}(x,\xi) & 2\{s_{1},{s_{2}}\}(x,\xi)\vspace{0.2em}\\
-{2\{s_{1},\overline{s_{2}}\}(x,-\xi)} & {\{s_{2},\overline{s_{2}}\}(x,-\xi)}
\end{matrix}
\right)S(x,\x)\,,
\end{aligned}
\end{equation}
and 
\begin{equation}\label{simboliAA12}
\begin{aligned}
&g_{1}(x,\x)
:=\frac{1}{2\ii}\{s_{1}^{(1)},{s_{1}}\}
-\frac{1}{2\ii}\{s_{2}^{(1)},\ov{s_{2}}\}
+\frac{1}{8}\s(s_{2},\ov{s_{2}})\,,
\\
&g_{2}(x,\x)
:=-\frac{1}{2\ii}\{s_{1}^{(1)},{s_{2}}\}
+\frac{1}{2\ii}\{s_{2}^{(1)},{s_{1}}\}\,,\\
&S_2(x,\x):=
\left(
\begin{matrix}
s_{1}^{(2)}(x,\x) & s_{2}^{(2)}(x,\x) \vspace{0.2em}\\ 
\ov{s_{2}^{(2)}(x,-\x) } & 
{s_{1}^{(2)}(x,-\x)}
\end{matrix}
\right)
:=-\left( 
\begin{matrix} g_{1}(x,\x) & g_{2}(x,\x)\vspace{0.2em}\\
{\ov{g_{2}(x,-\x)}} & {{g_{1}(x,-\x)}}
\end{matrix}
\right)S(x,\x)\,.
\end{aligned}
\end{equation}
We have the following lemma.
\begin{lemma}\label{StimeMatSSS}
We have that
the symbols $\widetilde{a}^{+}_{2,\mathtt{R}}$ in \eqref{nuovadiag}, 
$\widetilde{a}_{2,\mathtt{R}}$, 
$\widetilde{b}_{2,\mathtt{R}}$ in \eqref{A2tilde}, 
$s_1,s_2$ in \eqref{transC},
$g_{1}, g_{2}$ in \eqref{simboliAA12}
are even in the variable $\x\in \mathbb{R}^{d}$, while the
symbols in the matrix \eqref{simboliAA11} 
are odd in $\x\in \mathbb{R}^{d}$. Let $s_0\geq d+1$, $p\in \mathbb{N}$. 
One has 
\begin{align}
&|\widetilde{a}^{+}_{2,\mathtt{R}}|_{\mathcal{N}^{0}_{p}}
+|\widetilde{a}_{2,\mathtt{R}}|_{\mathcal{N}^{0}_{p}}
+|\widetilde{b}_{2,\mathtt{R}}|_{\mathcal{N}^{0}_{p}}
+|s_1-1|_{\mathcal{N}^{0}_{p}}
+|s_2|_{\mathcal{N}^{0}_{p}}\lesssim
C_1\|u\|_{H^{p+s_0+1}}\,,\quad p+s_0+1\leq s\,,\label{achille55}\\
&| \{s_{2},{s_{1}}\}|_{\mathcal{N}^{-1}_{p}}+
| \{s_{2},\overline{s_{2}}\}|_{\mathcal{N}^{-1}_{p}}
+|g_{i}|_{\mathcal{N}^{-2}_{p}}\lesssim
C_2\|u\|_{H^{p+s_0+3}}\,,\;\;\;
p+s_0+3\leq s\,,\label{achille552}
\end{align}
for $i=1,2$, and
for some $C_1$ depending on  $\|u\|_{H^{p+s_0+1}}$
and $C_2$ depending on  $\|u\|_{H^{p+s_0+3}}$, both bounded as $u$ goes to zero.
\end{lemma}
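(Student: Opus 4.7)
\bigskip

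\noindent\textbf{Proof proposal.} The argument splits into (i) the parity claims and (ii) the $\mathcal{N}^m_p$ estimates.

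For the parity, the plan is to chase symmetries through the definitions. The symbols $a_2,b_2$ in \eqref{simboa2} are quadratic forms in $\x$, hence even in $\x$; and the cut-off factor $|\x|^{-2}\mathcal{X}_{\mathtt{R}}(\x)$ is radial, so $\widetilde{a}_{2,\mathtt{R}}$ and $\widetilde{b}_{2,\mathtt{R}}$ in \eqref{A2tilde} are even in $\x$. Consequently the radical in \eqref{nuovadiag} and all the algebraic combinations defining $s_1,s_2$ in \eqref{transC} are even in $\x$. The key observation is the following parity rule: if $a(x,\x)$ and $b(x,\x)$ are even in $\x$, then $\pa_{\x_j}a$ is odd while $\pa_{x_j}a$ remains even, so by \eqref{PoissonBra} the Poisson bracket $\{a,b\}$ is odd; by \eqref{PoissonBra22} the quantity $\s(a,b)$ is even. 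Applying this to \eqref{simboliAA11} one sees that $S_1$ is (odd entries) times the even matrix $S$, hence has odd entries. Then in \eqref{simboliAA12}, $\{s_i^{(1)},s_j\}$ is a bracket of an odd and an even symbol, which again by the parity rule comes out \emph{even}; combined with $\s(s_2,\ov{s_2})$ (even from two evens) this gives $g_1,g_2$ even in $\x$.

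For the estimate \eqref{achille55}, I would proceed as follows. By Lemma \ref{lem:nonomosimbo} applied with $f(\x)=\x_j\x_k$, one has $|a_2|_{\mathcal{N}^2_p}+|b_2|_{\mathcal{N}^2_p}\lesssim C\|u\|_{H^{p+s_0+1}}$. Since $|\x|^{-2}\mathcal{X}_{\mathtt{R}}(\x)$ is an $x$-independent order $-2$ symbol (bounded on the support of $\mathcal{X}_{\mathtt{R}}$, with derivatives controlled), Lemma \ref{azioneSimboo}(i) (product part) gives the order-$0$ bound for $\widetilde{a}_{2,\mathtt{R}}$ and $\widetilde{b}_{2,\mathtt{R}}$. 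The estimates for $\widetilde{a}^+_{2,\mathtt{R}}=\lambda_2-1$, $s_1-1$, $s_2$ follow once one shows that each of them can be written as a smooth function $G(\widetilde a_{2,\mathtt{R}},\widetilde b_{2,\mathtt{R}})$ with $G(0,0)=0$; the ellipticity Hypothesis \ref{hyp1} is precisely what guarantees that the radicands in \eqref{nuovadiag}--\eqref{transC} stay uniformly bounded below, so $G$ is smooth on the relevant region. Writing $G(z_1,z_2)=z_1 G_1+z_2 G_2+\ov{z_2} G_3$ by Taylor (which exploits $G(0,0)=0$) and iterating the product estimate \eqref{prodSimboli} of Lemma \ref{azioneSimboo}(i) reduces everything to a Moser-type bound involving only $|\widetilde a_{2,\mathtt{R}}|_{\mathcal{N}^0_p}$ and $|\widetilde b_{2,\mathtt{R}}|_{\mathcal{N}^0_p}$, already controlled by $\|u\|_{H^{p+s_0+1}}$.

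For \eqref{achille552} one simply uses the second and third terms of \eqref{prodSimboli} in Lemma \ref{azioneSimboo}(i): a single Poisson bracket of two order-$0$ symbols is of order $-1$ and costs one derivative in $p$, while an iterated bracket (as in the definition of $g_i$ through $s^{(1)}_i$) plus the $\s$-term costs a further derivative, yielding the $\mathcal{N}^{-2}_p$ bound with the loss $p+s_0+3\leq s$. The main technical point is the Moser-type composition step used to control $\lambda_2,s_1,s_2$: one must work with smooth functions of $\widetilde a_{2,\mathtt{R}},\widetilde b_{2,\mathtt{R}}$ at the level of symbols rather than scalar functions, and this is where Hypothesis \ref{hyp1} is crucially used to keep us in a compact set where all the nonlinear expressions (square roots, quotients) are uniformly smooth. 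Everything else is bookkeeping in the $\mathcal{N}^m_p$-calculus.
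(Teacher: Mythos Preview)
Your proposal is correct and follows essentially the same approach as the paper's own (very terse) proof: parity by direct inspection of \eqref{A2tilde}, \eqref{nuovadiag}, \eqref{transC} and the parity behavior of \eqref{PoissonBra}--\eqref{PoissonBra22}, and the $\mathcal{N}^m_p$ bounds via Lemma \ref{realtaAAA} together with the observation that $s_1,s_2,\lambda_2$ are smooth functions of $\widetilde a_{2,\mathtt{R}},\widetilde b_{2,\mathtt{R}}$ (Hypothesis \ref{hyp1} keeping the radicands bounded below) combined with the product estimate \eqref{prodSimboli}. You have simply made explicit the Moser-type composition step and the parity bookkeeping that the paper leaves to the reader.
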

\begin{proof}
The symbols are even in $\x$ by direct inspection 
using \eqref{A2tilde}, \eqref{transC} and \eqref{realtaAAA1}.
The symbols in \eqref{simboliAA11} are odd in $\x$ by the same
reasoning. 
Estimates \eqref{achille55}, \eqref{achille552}
follow by Lemma \ref{realtaAAA} since the symbols $s_1,s_2$ are regular functions 
of $\widetilde{a}_{2,\mathtt{R}}$, $\widetilde{b}_{2,\mathtt{R}}$ 
(recall also the \eqref{prodSimboli}).
\end{proof}

By a direct computation one can check that
\begin{equation}\label{diagodiago}
S^{-1}(x,\x)E(\uno+\widetilde{A}_{2,\mathtt{R}}(x,\x))S(x,\x)=
\sm{\lambda_2(x,\x)}{0}{0}{-\lambda_2(x,\x)}\,,
\qquad s_1^{2}-|s_2|^{2}=1\,.
\end{equation}
Moreover 
the matrices of symbols $S, S^{-1}$ in \eqref{transC},
$S_1$ in \eqref{simboliAA11} and $S_2$ in \eqref{simboliAA12}
have the form \eqref{vinello100}, i.e. they are
\emph{real-to-real}. 
We shall study how the system \eqref{QLNLS444}
transforms under the maps
\begin{equation}\label{Mappa1}
\begin{aligned}
\Phi&=\Phi(u)[\cdot]:=\opbw(S^{-1}(x,\x))\,,
\\
\Psi&=\Psi(u)[\cdot]:=\opbw(S(x,\x)+S_1(x,\x)+S_{2}(x,\x))\,.
\end{aligned}
\end{equation}

\begin{lemma}\label{propMAPPA}
Assume the \eqref{regularity}.
For any $s\in \mathbb{R}$ 
the following holds:
\begin{enumerate}
\item[$(i)$] there exists a constant $C$ depending on $s$ 
and  on $\|u\|_{H^{2s_0+3}}$,  bounded as $u$ goes to zero, 
such that
\begin{equation}\label{stime-descentTOTA}
\begin{aligned}
\|\Phi(u)V\|_{H^{s}}+\|\Psi(u)V\|_{H^{s}}&\leq 
\|{V}\|_{H^{s}}\big(1+C\|u\|_{H^{2s_0+3}}\big)\,,
\qquad \forall\, V\in H^{s}(\mathbb{T}^{d};\mathbb{C})\,;
\end{aligned}
\end{equation} 

\item[$(ii)$] one has $\Psi(u)[\Phi(u)[\cdot]]=\uno+Q(u)[\cdot]$
where $Q$ is a real-to-real remainder of the form \eqref{vinello2}
satisfying 
\begin{equation}\label{achille10}
\|Q(u)V\|_{H^{s+3}}\lesssim C \|V\|_{H^{s}}\|u\|_{H^{2s_0+7}}\,,
\end{equation}
\begin{equation}\label{achille10tris}
\|Q(u)V\|_{H^{s+2}}\lesssim C \mathtt{R}^{-1}\|V\|_{H^{s}}\|u\|_{H^{2s_0+7}}\,,
\end{equation}
for some $C>0$ depending on $\|u\|_{H^{2s_0+7}}$ and bounded 
as $u$ goes to zero;

\item[$(iii)$] for $\mathtt{R}>0$ large enough 
with respect to $\mathtt{r}>0$ in \eqref{regularity}
the map $\uno+Q(u)$ is invertible
and $(\uno+Q(u))^{-1}=\uno+\widetilde{Q}(u)$ with 
\begin{equation}\label{carmen2}
\|\widetilde{Q}(u)V\|_{H^{s+2}}\lesssim C \mathtt{R}^{-1}\|V\|_{H^{s}}\|u\|_{H^{2s_0+7}}\,,
\end{equation}
for some $C>0$ as in item $(ii)$. Moreover
$\Phi^{-1}(u):=(\uno+\widetilde{Q}(u))\Psi(u)$ satisfies
\begin{equation}\label{carmen}
\|\Phi^{-1}(u)V\|_{H^{s}}\leq 
\|{V}\|_{H^{s}}\big(1+C\|u\|_{H^{2s_0+7}}\big)\,,
\qquad \forall\, V\in H^{s}(\mathbb{T}^{d};\mathbb{C})\,,
\end{equation}
for some $C>0$ depending on $\|u\|_{H^{2s_0+7}}$ and bounded 
as $u$ goes to zero;

\smallskip
\item[$(iv)$] for almost any $t\in [0,T)$,
one has 
$\pa_{t}\Phi(u)[\cdot]=\opbw(\pa_{t}S^{-1}(x,\x))$
and
\begin{equation}\label{achille11}
|\pa_{t}S^{-1}(x,\x)|_{\mathcal{N}_{s_0}^{0}}\lesssim_{\mathtt{r}} 
C \|u\|_{H^{2s_0+3}}\,,
\qquad
\|\pa_{t}\Phi(u)V\|_{H^{s}}\lesssim_{\mathtt{r}} C \|V\|_{H^{s}}\|u\|_{H^{2s_0+3}}\,,
\end{equation}
for some $C>0$ depending on $\|u\|_{H^{2s_0+3}}$ bounded 
as $u$ goes to zero.
\end{enumerate}
\end{lemma}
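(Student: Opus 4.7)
The plan is to prove (i) and (iv) by direct application of the action bound \eqref{actionSob} together with the nonhomogeneous symbol estimates of Lemma~\ref{lem:nonomosimbo}; (ii) by expanding the composition $\Psi(u)\circ\Phi(u)$ with Proposition~\ref{prop:compo} and checking that the correction matrices $S_1,S_2$ are exactly the terms designed to cancel the order $-1$ and $-2$ contributions; and (iii) by a Neumann series argument exploiting the $\mathtt{R}^{-1}$-smallness obtained in (ii).

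For item (i), I would write $S^{-1}(x,\x)=\uno+(S^{-1}(x,\x)-\uno)$ and observe that $\opbw(\uno)=\mathrm{Id}$ by direct inspection of \eqref{quantiWeyl}. The entries of $S^{-1}-\uno$ are built from $s_1-1$ and $s_2$, whose $\mathcal{N}^0_{s_0}$-norms are controlled by $\|u\|_{H^{2s_0+1}}\leq\|u\|_{H^{2s_0+3}}$ by \eqref{achille55}. Then \eqref{actionSob} yields the $\Phi$-bound in \eqref{stime-descentTOTA}; the $\Psi$-bound is identical after including the contributions of $S_1,S_2$, estimated by \eqref{achille552}. Item (iv) reduces to noting that $\partial_t\Phi(u)=\opbw(\partial_t S^{-1}(x,\x))$ by linearity of $a\mapsto T_a$, computing $\partial_t S^{-1}$ via the chain rule in $(u,\nabla u)$, and applying Lemma~\ref{lem:nonomosimbo} with $h=\partial_t u$. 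The hypothesis \eqref{regularity} bounds $\|\partial_t u\|_{H^{s_0+1}}$ by $\mathtt{r}$; the residual $u$-dependence then gives the factor $\|u\|_{H^{2s_0+3}}$ in \eqref{achille11}, and \eqref{actionSob} upgrades the symbol estimate to the operator estimate.

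The heart of the lemma, and the main obstacle, is item (ii). Here one must apply the matrix-valued version of the composition formula \eqref{composit}, interpreting products, Poisson brackets, and the symbol $\s(\cdot,\cdot)$ non-commutatively (preserving the left–right order of matrix multiplication). Formal expansion, together with $S\cdot S^{-1}=\uno$ from \eqref{diagodiago}, produces
\[
\Psi\circ\Phi=\uno+\opbw\!\Big(S_1 S^{-1}+\tfrac{1}{2\ii}\{S,S^{-1}\}\Big)+\opbw\!\Big(S_2 S^{-1}+\tfrac{1}{2\ii}\{S_1,S^{-1}\}-\tfrac{1}{8}\s(S,S^{-1})\Big)+\opbw(r_{-3})+R,
\]
where $r_{-3}$ is a symbol of order $-3$ coming from $\tfrac{1}{2\ii}\{S_2,S^{-1}\}$ and $-\tfrac{1}{8}\s(S_1,S^{-1})$, and $R$ is the composition remainder of \eqref{composit2} with gain $3$. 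The order $-1$ bracket vanishes identically by the very definition \eqref{simboliAA11} of $S_1$, and the order $-2$ bracket vanishes by the definition \eqref{simboliAA12} of $S_2$. Estimating $\opbw(r_{-3})$ by \eqref{actionSob} and $R$ by \eqref{composit2}, and using \eqref{achille55}–\eqref{achille552} together with the product estimate \eqref{prodSimboli}, yields \eqref{achille10}. To extract the improved factor $\mathtt{R}^{-1}$ at the cost of one order of smoothing in \eqref{achille10tris}, I would rerun the expansion with the $\mathtt{R}$-cutoff version \eqref{compositTheta}–\eqref{composit2Theta}: since the non-identity parts of $S$, $S^{-1}$, $S_1$, $S_2$ all inherit the cutoff $\mathcal{X}_{\mathtt{R}}(\x)$ through $\widetilde{a}_{2,\mathtt{R}},\widetilde{b}_{2,\mathtt{R}}$ from \eqref{A2tilde}, they fit the hypothesis of Proposition~\ref{prop:compo}(ii), and the remainder gains the factor $\mathtt{R}^{-1}$. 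The surviving order $-3$ piece is split into a tail supported where $\mathcal{X}_{\mathtt{R}}=1$ (controlled at order $-3$) and a low-frequency piece handled by \eqref{azionecufoffoff} with $q=2$, $m=-3$, which yields the $\mathtt{R}^{-1}$ factor.

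Item (iii) is then a standard Neumann series. From \eqref{achille10tris}, $\|Q(u)V\|_{H^s}\leq\|Q(u)V\|_{H^{s+2}}\leq C\mathtt{R}^{-1}\mathtt{r}\|V\|_{H^s}$ under \eqref{regularity}, so for $\mathtt{R}$ large enough (depending on $\mathtt{r}$) the operator $\uno+Q(u)$ is invertible on $H^s$ with inverse $\uno+\widetilde{Q}(u)$, where $\widetilde{Q}(u):=\sum_{k\geq 1}(-Q(u))^k$ converges in $\mathcal{L}(H^s)$. Iterating \eqref{achille10tris} termwise, $\|(-Q)^k V\|_{H^{s+2}}\leq\|QV\|_{H^{s+2}}(C\mathtt{R}^{-1}\mathtt{r})^{k-1}$, and summing gives \eqref{carmen2}. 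Finally $\Phi^{-1}(u)=(\uno+\widetilde{Q}(u))\Psi(u)$ by construction; combining item (i) with \eqref{carmen2} yields \eqref{carmen}.
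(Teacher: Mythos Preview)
Your proposal is correct and follows essentially the same approach as the paper: items (i) and (iv) via \eqref{actionSob} combined with the symbol estimates \eqref{achille55}--\eqref{achille552} and Lemma~\ref{lem:nonomosimbo}; item (ii) via the composition expansion \eqref{composit} with the observation that $S_1,S_2$ are chosen precisely to cancel the order $-1$ and $-2$ contributions, leaving only remainders controlled by \eqref{composit2} and \eqref{composit2Theta}; and item (iii) via Neumann series using the $\mathtt{R}^{-1}$-smallness from \eqref{achille10tris}. Your write-up is in fact more explicit than the paper's (which compresses item (ii) into two sentences), but the underlying argument is identical.
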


\begin{proof}
$(i)$ The bound \eqref{stime-descentTOTA} follows by 
\eqref{actionSob} and \eqref{achille55}, \eqref{achille552}.

\noindent
$(ii)$ By applying Proposition \ref{prop:compo} to the maps in  \eqref{Mappa1},
using the expansion \eqref{composit}
  and the \eqref{simboliAA11}, \eqref{simboliAA12}
  we have $\Psi(u)[\Phi(u)[\cdot]]=\uno+Q(u)$. The remainder $Q(u)$
  satisfies \eqref{achille10}, \eqref{achille10tris}
  by estimates 
  \eqref{composit2}, \eqref{composit2Theta} and 
     \eqref{achille55}, \eqref{achille552}.

\noindent
$(iii)$
This item follows by using Neumann series, the second condition in \eqref{regularity}, the bound
\eqref{achille10tris} and taking $\mathtt{R}$ 
large enough to obtain the smallness condition
$\|Q(u)V\|_{H^{s+2}}\lesssim 1/2 \|V\|_{H^{s}}$.
The \eqref{carmen} follows by composition using 
\eqref{carmen2} and \eqref{stime-descentTOTA}.

\noindent
$(iv)$ We note that
 \[
 \pa_{t}s_1(x,\x)=(\pa_{u}s_1)(u;x,\x)[\dot{u}]+
 (\pa_{\bar{u}}s_1)(u;x,\x)[\dot{\bar{u}}]\,.
 \]
By hypothesis \eqref{regularity} we have that $\dot{u}$ and $\dot{\bar{u}}$
belong to $H^{s-2}(\mathbb{T}^{d};\mathbb{C})$.
Moreover, recalling \eqref{transC} and \eqref{A2tilde}, 
we can express $\pa_{t}s_1(x,\x)$ in terms of the derivatives of the symbols
$a_2(x,\x), b_2(x,\x)$ in \eqref{simboa2}. 
Therefore, by applying 
Lemma \ref{lem:nonomosimbo} (see estimate \eqref{simboStima2}),
 we deduce
\[
|\pa_{t}s_1(x,\x)|_{\mathcal{N}_{s_0}^{0}}\lesssim_{\mathtt{r}} C\|u\|_{H^{2s_0+3}}\,.
\]
Reasoning similarly one can prove a similar bound for
the symbol $s_2$.
This implies 
 the first in \eqref{achille11}. 
 The second one follows by \eqref{actionSob}.
\end{proof}
We are ready to prove the following conjugation result.
\begin{proposition}{\bf (Block-diagonalization).}\label{prop:block}
Assume \eqref{regularity}, consider the system \eqref{QLNLSV}
and set
\begin{equation}\label{nuovavariabile}
Z:=\Phi(u)[V]\,.
\end{equation}
Then 
we have
\begin{equation}\label{systZZZ}
\dot{Z}=\ii E \opbw
\left(|\x|^{2}\uno+A_{2}^{(1)}(x,\x)+A_{1}^{(1)}(x,\x)
\right)Z+\mathcal{R}(U)V\,,
\end{equation}
where (recall \eqref{nuovadiag})
\begin{equation}\label{A1finale}
\begin{aligned}
&A_2^{(1)}(x,\x):=\left( \begin{matrix}
a_2^{(1)}(x,\x) & 0\\
0 &a_2^{(1)}(x,\x)
\end{matrix}\right)\,, \qquad a_2^{(1)}(x,\x):=|\x|^{2}\widetilde{a}_{2,\mathtt{R}}^{+}(x,\x)\,,\\
&A_1^{(1)}(x,\x):=\left( \begin{matrix}
a_1^{(1)}(x,\x) & b_1^{(1)}(x,\x) \vspace{0.2em}\\
\ov{b_1^{(1)}(x,-\x) } & a_1^{(1)}(x,-\x)
\end{matrix}\right)\,,\qquad a_i^{(1)}(x,\x)\in \mathbb{R}\,,
\; i=1,2\,,
\\&
a_1^{(1)}(x,-\x)=-a_1^{(1)}(x,\x)\,,
\qquad
b_1^{(1)}(x,-\x)=-b_1^{(1)}(x,\x)
\qquad
a_2^{(1)}(x,-\x)=a_2^{(1)}(x,\x) \,,
\end{aligned}
\end{equation}
and the symbols $a_2^{(1)}, a_1^{(1)}, b_1^{(1)}$ satisfy
\begin{align}
|a_{2}^{(1)}|_{\mathcal{N}_{p}^{2}}
&\lesssim \mathtt{c}_1 \|u\|_{H^{p+s_0+1}}\,,
\qquad p+s_0+1\leq s\,,\;\;\;p\in \mathbb{N}\label{A1finale2}\\
|a_1^{(1)}|_{\mathcal{N}_{p}^{1}}+ |b_1^{(1)}|_{\mathcal{N}_{p}^{1}}&\lesssim
\mathtt{c}_2 \|u\|_{H^{p+s_0+3}}\,,
\quad p+s_0+3\leq s\,,\;\;\;p\in \mathbb{N}\label{A1finale3}
\end{align}
for some $\mathtt{c}_1,\mathtt{c}_2>0$ depending respectively on $\|u\|_{H^{p+s_0+1}}$ and 
$\|u\|_{H^{p+s_0+3}}$, bounded as $u$ goes to zero.
The remainder $\mathcal{R}$ is real-to-real and satisfies,
for any $s\geq 2s_0+7$,
 the estimate
\begin{equation}\label{restofinale}
\|\mathcal{R}(U)V\|_{H^{s}}\lesssim_{\mathtt{r}} C\|V\|_{H^{s}}\|u\|_{H^{s}}\,,
\end{equation}
for some $C>0$ depending on $\|u\|_{H^{s}}$, 
bounded
as $u$ goes to zero.
\end{proposition}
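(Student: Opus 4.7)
Differentiating $Z=\Phi(u)V$ in time, substituting \eqref{QLNLSV}, and using $V=\Phi^{-1}(u)Z=(\uno+\widetilde Q(u))\Psi(u)Z$ from Lemma \ref{propMAPPA}(iii), I obtain
\begin{multline*}
\dot Z = \big(\pa_t\Phi(u)\big)\Phi^{-1}(u)Z + \opbw(S^{-1})\circ\ii E\opbw(L)\circ\opbw(S+S_1+S_2)\,Z \\
+ \opbw(S^{-1})\,\ii E\opbw(L)\,\widetilde Q(u)\,\opbw(S+S_1+S_2)\,Z,
\end{multline*}
with $L(x,\x):=|\x|^2\uno+A_2(x,\x)+A_1(x,\x)$. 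By items (iii)--(iv) of Lemma \ref{propMAPPA} the first summand is bounded on $H^s$ with the required estimate; the third summand is of order $0$ because $\widetilde Q(u)$ gains two derivatives with a factor $\mathtt R^{-1}$ via \eqref{carmen2}, compensating the order-$2$ operator $\opbw(L)$. Both contributions go into $\mathcal R(U)V$, reducing the problem to the principal three-term composition in the middle.

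I expand this composition using the cutoff composition formula \eqref{compositTheta} of Proposition \ref{prop:compo}(ii), applied twice so that every remainder acquires the factor $\mathtt R^{-1}$ needed for \eqref{restofinale}; the non-localized low-frequency pieces $(\uno-\mathcal X_{\mathtt R})A_i$ are absorbed via \eqref{azionecufoffoff}. At top order, writing $|\x|^2\uno+A_2=|\x|^2(\uno+\widetilde A_{2,\mathtt R})+(\uno-\mathcal X_{\mathtt R})A_2$ and using the pointwise diagonalization \eqref{diagodiago}, the matrix product
\begin{equation*}
S^{-1}\cdot E|\x|^2(\uno+\widetilde A_{2,\mathtt R})\cdot S \;=\; E\big(|\x|^2+a_2^{(1)}\big)\uno,\qquad a_2^{(1)}:=|\x|^2\widetilde a_{2,\mathtt R}^{+},
\end{equation*}
is exactly the diagonal $A_2^{(1)}$ of \eqref{A1finale}, and \eqref{A1finale2} follows from Lemma \ref{StimeMatSSS}.

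At order $1$ three contributions combine: the matrix product $S^{-1}\,EA_1\,S$; the first-order Poisson-bracket corrections $\frac{1}{2\ii}S^{-1}\{E|\x|^2\uno,S\}$ and $-\frac{1}{2\ii}\{S^{-1},E|\x|^2\uno\}\cdot S$ produced by \eqref{composit}; and the term $S^{-1}\,E|\x|^2\uno\,S_1$ coming from the $S_1$ summand in $\Psi$. Collecting these yields $A_1^{(1)}$; its real-to-real block form is inherited from the real-to-real structure of every factor (Remark \ref{simboMatr}), the claimed parities in $\x$ follow because $S,S^{-1}$ are even while $A_1, S_1$ are odd (Lemma \ref{realtaAAA} and Lemma \ref{StimeMatSSS}), and the reality of $a_1^{(1)}, a_2^{(1)}$ together with $b_1^{(1)}(x,-\x)=-b_1^{(1)}(x,\x)$ come from preservation of the self-adjoint/real-to-real structure (Definition \ref{def:selfself}, Remark \ref{simboMatr}), since $\opbw(L)$ is self-adjoint by Proposition \ref{NLSparapara}. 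The symbol bound \eqref{A1finale3} then follows from Proposition \ref{prop:compo}(i) and the cited lemmas; the estimate \eqref{restofinale} is obtained by collecting all order-$0$ remainders from the two cutoff compositions via \eqref{composit2Theta} and \eqref{actionSob}, the hypothesis $s\geq 2s_0+7$ providing enough regularity to absorb the four derivatives lost per composition plus the $s_0+3$ derivatives lost in the symbol bound of $S_1$.

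\textbf{Main difficulty.} The principal algebraic burden is bookkeeping: a three-term composition of $2\times 2$ matrix symbols produces numerous order-$1$ Poisson-bracket terms, and one must use the identity $s_1^2-|s_2|^2=1$ from \eqref{diagodiago} together with the parities of $s_1,s_2$ to reduce the products $\{s_i,\bar s_j\}$ and the matrix products $S^{-1}(\cdot)S$ to the claimed real-to-real form \eqref{A1finale}. Note that $S_1$ is designed only to make $\Psi$ an approximate right inverse of $\Phi$ with error of order $-2$ (Lemma \ref{propMAPPA}(ii)), \emph{not} to block-diagonalize at order $1$; correspondingly the off-diagonal block $b_1^{(1)}$ remains in the final system. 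Once the algebraic structure is in place, the Sobolev estimates follow routinely from Proposition \ref{prop:compo}, Lemma \ref{propMAPPA}, and Lemmas \ref{realtaAAA}--\ref{StimeMatSSS}.
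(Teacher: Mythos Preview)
Your outline follows the paper's own route: differentiate $Z=\Phi(u)V$, substitute \eqref{QLNLSV}, use the approximate inverse $V=(\uno+\widetilde Q(u))\Psi(u)Z$ from Lemma~\ref{propMAPPA}(iii), and expand the triple composition $\opbw(S^{-1})\,\ii E\opbw(L)\,\opbw(S+S_1+S_2)$ via Proposition~\ref{prop:compo}. The identification of the order-$2$ part through \eqref{diagodiago} and the list of order-$1$ contributions are the right ones, and your parity bookkeeping via ``even $\times$ odd'' is exactly how the paper obtains the parities in \eqref{A1finale}. Two minor corrections: the Poisson-bracket terms at order~$1$ must involve the full $|\x|^2\uno+A_2$ (the paper's $c_1$ in \eqref{achille32} contains $a_{2,\mathtt R}$ and $b_{2,\mathtt R}$), and likewise $S^{-1}E(|\x|^2\uno+\mathcal X_{\mathtt R}A_2)S_1$ rather than just $S^{-1}E|\x|^2\uno S_1$; also, you do not need the $\mathtt R^{-1}$ gain from \eqref{compositTheta} here, since the constants in \eqref{restofinale} are allowed to depend on $\mathtt r$ (hence on $\mathtt R$), and the paper simply uses \eqref{composit}.

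There is, however, one genuine gap. Your argument that the reality of $a_1^{(1)}$ (and the property $b_1^{(1)}(x,-\x)=-b_1^{(1)}(x,\x)$) ``comes from preservation of the self-adjoint/real-to-real structure'' does not work as stated. Real-to-real structure only gives the block form \eqref{vinello100}, not reality of the diagonal entry; and conjugation by $\Phi=\opbw(S^{-1})$ does \emph{not} preserve self-adjointness because $\Phi$ is neither unitary nor symplectic beyond principal-symbol level. In fact, if $\opbw(A_1^{(1)})$ were self-adjoint then \eqref{simboAggiunto2} would force $b_1^{(1)}$ to be \emph{even} in $\x$, contradicting the odd parity you derived. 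The paper instead verifies $a_1^{(1)}=a_1+c_1+r_1\in\R$ by direct expansion of the Poisson brackets: see \eqref{achille34} for $c_1$ and \eqref{achille3444} for $r_1$; it also checks the identity $d_1\equiv0$ in \eqref{achille34bis}. These computations (particularly the reality of $c_1$, which is a sum of terms of the form $\tfrac{1}{2\ii}\{\text{real},\text{complex}\}$ whose imaginary parts must be seen to cancel) are where the actual work lies and cannot be replaced by the structural hand-wave.
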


\begin{proof}
By \eqref{QLNLSV}, \eqref{nuovavariabile}
we have
\begin{equation}\label{achille15}
\dot{Z}=\Phi(u)\ii E\opbw\big(|\x|^{2}\uno +A_{2}(x,\x)+A_{1}(x,\x)\big)V+(\pa_t\Phi(u))V\,.
\end{equation}
By item $(iii)$ of Lemma \ref{propMAPPA}
we can write $V=\Phi^{-1}(u)Z=(\uno+\widetilde{Q}(u))\Psi(u)Z$ with $\widetilde{Q}(u)$
 satisfying \eqref{carmen2}.
Using this  in \eqref{achille15} (recall \eqref{cutofftheta}) we get
\begin{equation}\label{achille16}
\dot{Z}=
\Phi(u)\ii E\opbw\Big(|\x|^{2}\uno +\mathcal{X}_{\mathtt{R}}(\x)
\big(A_{2}(x,\x)+A_{1}(x,\x)\big)\Big)\Psi(u)Z
+Q_1(u)V\,,
\end{equation}
where
\begin{equation}\label{achille18}
\begin{aligned}
Q_1(u)&:=\Phi(u)
\ii E\opbw\Big((1-\mathcal{X}_{\mathtt{R}}(\x))
\big(A_{2}(x,\x)+A_{1}(x,\x)\big)\Big)\Psi(u)\Phi(u)
+(\pa_t\Phi(u))\\
&+\Phi(u)
\ii E\opbw\Big(|\x|^{2}\uno +A_{2}(x,\x)+A_{1}(x,\x)\Big)
\widetilde{Q}(u)\Psi(u)\Phi(u)\,.
\end{aligned}
\end{equation}
By using \eqref{stime-descentTOTA}, 
\eqref{achille10}, \eqref{carmen2}, \eqref{actionSob}, \eqref{azionecufoffoff}, the \eqref{nonomosimbo2}
on the symbols $a_2,b_2,a_1$, and  item $(iv)$ of Lemma \ref{propMAPPA} 
we deduce
\begin{equation}\label{achille19}
\|Q_1(u)V\|_{H^{s}}\lesssim_{\mathtt{r}} C\mathtt{R}^{2}\|V\|_{H^{s}}\|u\|_{H^{s}}\,,
\qquad s\geq 2s_0+7\,,
\end{equation}
for some constant $C>0$ depending on $\|u\|_{H^{s}}$,
bounded
as $u$ goes to zero.
We now study the term of order one in \eqref{achille16}.
Recalling \eqref{Mappa1} we have
\begin{equation}\label{achille20}
\Phi(u)\ii E\opbw(\mathcal{X}_{\mathtt{R}}A_1(x,\x))\Psi(u)=
\opbw(S^{-1})
\ii E\opbw(\mathcal{X}_{\mathtt{R}} A_1)
\opbw(S)+Q_2(u)\,,
\end{equation}
where
\[
Q_{2}(u):=\Phi(u)\ii E\opbw(\mathcal{X}_{\mathtt{R}} A_1)\opbw(S_1+S_2)\,.
\]
Using Lemmata \ref{realtaAAA}, \ref{StimeMatSSS}, \ref{azioneSimboo} 
(recall that 
$S_1$, $S_2$ 
in \eqref{simboliAA11}, \eqref{simboliAA12}
are matrices of symbols of order $\leq -1$) 
one gets
\begin{equation}\label{timoria}
\|Q_2(u)V\|_{H^{s}}\lesssim C\|V\|_{H^{s}}\|u\|_{H^{2s_0+3}}\,,
\end{equation}
for some constant $C>0$ depending on $\|u\|_{H^{2s_0+3}}$,
bounded
as $u$ goes to zero.
We define
\begin{equation}\label{timoria6}
a_{i,\mathtt{R}}(x,\x):=\mathcal{X}_{\mathtt{R}}(\x)a_{i}(x,\x)\,,\;\;\; i=1,2\,,
\end{equation}
with $a_{2}(x,\x), a_{1}(x,\x)$ in \eqref{simboa2}.
Recalling  Lemma  \ref{realtaAAA},  \eqref{notaPara}
and \eqref{transC}
we have
\begin{equation}\label{achille21}
\begin{aligned}
&\opbw(S^{-1})
\opbw(\ii E \mathcal{X}_{\mathtt{R}}A_1)
\opbw(S)=\ii E
\left(
\begin{matrix}
C_1 & C_2\\
\ov{C_2} & \ov{C_1}
\end{matrix}
\right)
\\
&C_1:=T_{s_{1}}T_{a_{1,\mathtt{R}}}T_{s_{1}}
-T_{s_{2}}T_{a_{1,\mathtt{R}}}T_{\bar{s_{2}}}\,,\qquad 
C_2:=
T_{s_{1}}T_{a_{1,\mathtt{R}}}T_{s_{2}}
-T_{s_{2}}T_{a_{1,\mathtt{R}}}T_{{s_{1}}}\,.
\end{aligned}
\end{equation}
By using Proposition \ref{prop:compo} and the second condition in \eqref{diagodiago}
we get (see the expansion \eqref{composit})
\[
C_1=T_{a_{1,\mathtt{R}}}+Q_3(u)\,,\qquad C_2=Q_{4}(u)
\]
where $Q_{i}(u)$, $i=3,4$ are remainders satisfying, using  Lemmata 
\ref{realtaAAA}, \ref{StimeMatSSS},
 \begin{equation}\label{timoria2}
\|Q_i(u)V\|_{H^{s}}\lesssim C\|V\|_{H^{s}}\|u\|_{H^{2s_0+5}}\,,
\end{equation}
for some constant $C>0$ depending on $\|u\|_{H^{2s_0+5}}$,
bounded
as $u$ goes to zero.
%
Let us study the term of order two in \eqref{achille16}.
 By an explicit computation, using Proposition \ref{prop:compo} 
 and Lemma \ref{StimeMatSSS},
we have
\begin{equation}\label{achille20B}
\begin{aligned}
\Phi(u)\ii E\opbw\Big((|\x|^{2}\uno+\mathcal{X}_{\mathtt{R}}(\x)&A_2(x,\x))\Big)\Psi(u)=\ii E\left(
\begin{matrix}
B_1 & B_2\\
\ov{B_2} & \ov{B_1}
\end{matrix}
\right)\\&+\opbw\Big(
S^{-1}\ii E(|\x|^{2}\uno+\mathcal{X}_{\mathtt{R}}(\x)A_2)S_1)
\Big)+Q_{5}(u)
\end{aligned}
\end{equation}
where
 \begin{equation}\label{timoria3}
\|Q_5(u)V\|_{H^{s}}\lesssim C\|V\|_{H^{s}}\|u\|_{H^{2s_0+7}}\,,
\end{equation}
for some $C>0$ depending on $\|u\|_{H^{2s_0+7}}$,
bounded
as $u$ goes to zero,
and where, recalling Lemma  \ref{realtaAAA},  \eqref{notaPara} and \eqref{transC}, \eqref{timoria6},
\begin{equation}\label{achille21B}
\begin{aligned}
B_1&:=T_{s_{1}}T_{|\x|^{2}
+a_{2,\mathtt{R}}}T_{s_{1}}+T_{s_{1}}
T_{b_{2,\mathtt{R}}}T_{\bar{s_{2}}}
+T_{s_{2}}T_{\ov{b_{2,\mathtt{R}}}}T_{{s_{1}}}
+T_{s_{2}}
T_{|\x|^{2}+a_{{2,\mathtt{R}}}}
T_{\ov{s_{2}}}
\,,\\
B_2&:=
T_{s_{1}}T_{|\x|^{2}
+a_{2,\mathtt{R}}}T_{s_{2}}
+T_{s_{1}}T_{b_{2,\mathtt{R}}}T_{{s_{1}}}
+T_{s_{2}}T_{\ov{b_{2,\mathtt{R}}}}T_{{s_{2}}}
+T_{s_{2,\mathtt{R}}}T_{|\x|^{2}
+a_{2,\mathtt{R}}}T_{{s_{1}}}\,.
\end{aligned}
\end{equation}
We study each term separately.
By using Proposition \ref{prop:compo}
we get (see the expansion \eqref{composit})
\begin{equation}\label{achille30}
B_1:=T_{c_2}+T_{c_1}+Q_{6}(u)
\end{equation}
where
\begin{equation}\label{achille31}
\|Q_6(u)h\|_{H^{s}}\lesssim C\|h\|_{H^{s}}\|u\|_{H^{2s_0+5}}\,,
\end{equation}
for some constant $C>0$ depending on $\|u\|_{H^{2s_0+5}}$,
bounded
as $u$ goes to zero,
and 
\begin{equation}\label{achille32}
\begin{aligned}
c_2(x,\x)&:=(|\x|^{2}+a_{2,\mathtt{R}})(s_1^{2}+|s_2|^{2})
+b_{2,\mathtt{R}} s_1\ov{s_2}+\ov{b_{2,\mathtt{R}}}s_1s_2
\,,\\
c_1(x,\x)&:=\frac{1}{2\ii }\{s_{1},
(|\x|^{2}+a_{2,\mathtt{R}})s_{1}\}
+\frac{s_{1}}{2\ii }\{|\x|^{2}
+a_{2,\mathtt{R}}, s_{1}\}
\\&+\frac{1}{2\ii }\{s_{1},b_{2,\mathtt{R}}\ov{s_{2}}\}
+\frac{s_{1}}{2\ii }\{b_{2,\mathtt{R}},\ov{s_{2}}\}
+\frac{1}{2\ii }\{s_{2},\ov{b_{2,\mathtt{R}}}s_{1}\}
\\&+
\frac{s_{2}}{2\ii }\{\ov{b_{2,\mathtt{R}}},s_{1}\}
+\frac{1}{2\ii }\{s_{2},(|\x|^{2}
+a_{2,\mathtt{R}})\ov{s_{2}}\}+
\frac{s_{2}}{2\ii }\{|\x|^{2}
+a_{2,\mathtt{R}}, \ov{s_{2}}\}\,.
\end{aligned}
\end{equation}
By expanding the Poisson bracket (see \eqref{PoissonBra})
we get that
\begin{equation}\label{achille34}
c_2(x,\x)=\ov{c_2(x,\x)}\,, \qquad c_1(x,\x)=\ov{c_1(x,\x)}\,, 
\qquad
c_1(x,-\x)=-{c_1(x,\x)}\,\,.
\end{equation}
Moreover, by \eqref{achille55},  \eqref{prodSimboli}
and Lemma \ref{lem:nonomosimbo} on $a_2,b_2$,  we have
\begin{equation}\label{achille35}
|c_1|_{\mathcal{N}^{1}_{p}}\lesssim C\|u\|_{H^{p+s_0+2}}\,,
\end{equation}
for some $C$ depending on $\|u\|_{H^{p+s_0+2}}$,
bounded
as $u$ goes to zero.
Reasoning similarly we deduce
(see \eqref{achille21B})
\begin{equation}\label{achille30bis}
B_2:=T_{d_2}+T_{d_1}+Q_{7}(u)
\end{equation}
where
\begin{equation}\label{achille31bis}
\|Q_7(u)h\|_{H^{s}}\lesssim C\|h\|_{H^{s}}\|u\|_{H^{2s_0+5}}\,,
\end{equation}
for some $C$ depending on $\|u\|_{H^{2s_0+5}}$,
bounded
as $u$ goes to zero,
and 
\begin{equation}\label{achille32bis}
\begin{aligned}
d_2(x,\x)&:=(|\x|^{2}+a_{2,\mathtt{R}})s_1s_2
+b_2 s_1^{2}+\ov{b_{2,\mathtt{R}}}s_2^{2}\,,\\
d_1(x,\x)&:=\frac{1}{2\ii }\{s_{1},(|\x|^{2}
+a_{2,\mathtt{R}})s_{2}\}
+\frac{s_{1}}{2\ii }\{|\x|^{2}
+a_{2,\mathtt{R}}, s_{2}\}
\\&+\frac{1}{2\ii }\{s_{1},b_{2,\mathtt{R}} s_{1}\}
+\frac{s_{1}}{2\ii }\{b_{2,\mathtt{R}}, s_{1}\}
+\frac{1}{2\ii }\{s_{2},\ov{b_{2,\mathtt{R}}}s_{2}\}
\\&+
\frac{s_{2}}{2\ii }\{\ov{b_{2,\mathtt{R}}},s_{2}\}
+\frac{1}{2\ii }\{s_{2},(|\x|^{2}
+a_{2,\mathtt{R}})s_{1}\}+
\frac{s_{2}}{2\ii }\{|\x|^{2}
+a_{2,\mathtt{R}}, s_{1}\}\,.
\end{aligned}
\end{equation}
By expanding the Poisson bracket (see \eqref{PoissonBra})
we get that
\begin{equation}\label{achille34bis}
d_1(x,\x)\equiv0\,.
\end{equation}
We now study the second summand in the right hand side of \eqref{achille20B}
by computing explicitly the matrix of symbols of order 1.
Using \eqref{transC}, \eqref{simboliAA11}, \eqref{diagodiago}, 
we get
\[
S^{-1}
E(|\x|^{2}\uno+\mathcal{X}_{\mathtt{R}}(\x)A_2(x,\x))S_1
=
E\left(\begin{matrix}r_1(x,\x) & r_2(x,\x)
\vspace{0.2em}\\
\ov{r_2(x,-\x)} & \ov{r_1(x,-\x)}\end{matrix}
\right)\,,
\]
where
\begin{align}
&r_1(x,\x):=(|\x|^{2}+a_{2,\mathtt{R}})s_1s_1^{(1)}+
b_{2,\mathtt{R}}s_1\ov{s_2^{(1)}}
+\ov{b_{2,\mathtt{R}}}s_2s_1^{(1)}+
(|\x|^{2}+a_{2,\mathtt{R}})s_2\ov{s_2^{(1)}}\,,\label{cantolibero2}\\
&r_2(x,\x):=
(|\x|^{2}+a_{2,\mathtt{R}})s_1s_2^{(1)}+
b_{2,\mathtt{R}}s_1{s_1^{(1)}}
+\ov{b_{2,\mathtt{R}}}s_2s_2^{(1)}+
(|\x|^{2}+a_{2,\mathtt{R}})s_2{s_1^{(1)}}\,.\label{cantolibero3}
\end{align}
Moreover, using Lemma \ref{StimeMatSSS}, one can check that
 the symbols $r_1,r_2$  satisfy
\begin{equation}\label{achille3444}
 r_1(x,\x)=\ov{r_1(x,\x)}\,, 
\qquad
r_1(x,-\x)=-{r_1(x,\x)}\,\,,\quad
r_2(x,-\x)=-{r_2(x,\x)}\,,
\end{equation}
and, using \eqref{achille55} and \eqref{achille552}, we can note
\begin{equation}\label{cantolibero5}
|r_i |_{\mathcal{N}_{p}^{1}}\lesssim C\|u\|_{H^{p+s_0+3}}\,,\quad p+s_0+3\leq s\,,\;\;\;
i=1,2\,,
\end{equation}
for some $C>0$ depending on  $\|u\|_{H^{p+s_0+3}}$,
bounded
as $u$ goes to zero.
By the discussion above we deduce that
\begin{equation}\label{achille40}
\eqref{achille20}+\eqref{achille20B}=\ii E \opbw\left(
\begin{matrix}
c_2(x,\x) & d_2(x,\x)\vspace{0.2em}\\
\ov{d_2(x,-\x)} & c_2(x,\x)
\end{matrix}
\right)+\ii E\opbw\left(
\begin{matrix}
a_1^{(1)}(x,\x) & b_1^{(1)}(x,\x)\\
\ov{b_1^{(1)}(x,-\x)} & a_1^{(1)}(x,-\x)
\end{matrix}
\right)
\end{equation}
where 
\begin{equation}\label{cantolibero3bis}
a_1^{(1)}(x,\x):=a_1(x,\x)+c_1(x,\x)+r_1(x,\x)\,,
\quad b_1^{(1)}(x,\x):=r_2(x,\x)\,.
\end{equation}
up to 
a remainder satisfying \eqref{restofinale}
(see \eqref{achille19}, \eqref{timoria}, \eqref{timoria2}, \eqref{timoria3}, \eqref{achille31}, \eqref{achille31bis}).
The symbols $a_1^{(1)}(s,\x)$ and  $b_1^{(1)}(s,\x)$
satisfy the parity conditions \eqref{A1finale}
by \eqref{achille34}, \eqref{achille3444},
and the estimates
\eqref{A1finale3}
by Lemma \ref{realtaAAA}, and estimates   \eqref{achille35} and \eqref{cantolibero5}.
By \eqref{achille32}, \eqref{achille32bis}
we observe that
\begin{equation}\label{achille50}
\left(
\begin{matrix}
c_2(x,\x) & d_2(x,\x)\vspace{0.2em}\\
-\ov{d_2(x,-\x)} & -c_2(x,\x)
\end{matrix}
\right)=S^{-1}(x,\x)E(\uno+\widetilde{A}_{2,\mathtt{R}}(x,\x))S(x,\x)|\x|^{2}
\stackrel{\eqref{diagodiago}}{=}
\sm{\lambda_2(x,\x)}{0}{0}{-\lambda_2(x,\x)}|\x|^{2}\,.
\end{equation}
Therefore the \eqref{achille40}, \eqref{achille50}
implies the \eqref{systZZZ}.
This concludes the proof.
\end{proof}

\subsubsection{Block-diagonalization at order 1}
In this section we eliminate the off-diagonal symbol $b_1^{(1)}(x,\x)$
appearing in \eqref{systZZZ}, \eqref{A1finale}. 
In order to do this we consider the symbol
\begin{equation}\label{simboCCC}
c(x,\x):=\frac{\mathcal{X}_{\mathtt{R}}(\x)b_1^{(1)}(x,\x)}{2(|\x|^{2}+a_2^{(1)}(x,\x))}\,,
\qquad B(x,\x):=\left(
\begin{matrix}
0 & c(x,\x) \\ \ov{c(x,-\x)} & 0
\end{matrix}
\right)\,,
\end{equation}
where $a_2^{(1)}(x,\x), b_1^{(1)}(x,\x)$ 
are given by Proposition \ref{prop:block}
and $\mathcal{X}_{\mathtt{R}}(\x)$ is in \eqref{cutofftheta}.
We set
\begin{equation}\label{Mappa2}
\Phi_2(u)[\cdot]:=\uno+\opbw(B(x,\x))\,,\qquad \Psi_2(u)[\cdot]:=\uno-\opbw(B(x,\x)
-B^{2}(x,\x))\,,
\end{equation}
where $\uno:=\sm{1}{0}{0}{1}$ is the identity matrix. We have the following.

\begin{lemma}\label{propMAPPA2}
Assume the \eqref{regularity}.
For any $s\in \mathbb{R}$ 
the following holds:
\begin{enumerate}
\item[$(i)$] there exists a constant $C$ depending 
  on $\|u\|_{H^{2s_0+3}}$, bounded as $u$ goes to zero,  such that
\begin{equation}\label{stime-descentTOTA2}
\begin{aligned}
\|\Phi_2(u)V\|_{H^{s}}+\|\Psi_2(u)V\|_{H^{s}}&\leq 
\|{V}\|_{H^{s}}\big(1+C\|u\|_{H^{2s_0+3}}\big)\,,
\qquad \forall\, V\in H^{s}\,;
\end{aligned}
\end{equation} 

\item[$(ii)$] one has $\Psi_2(u)[\Phi_2(u)[\cdot]]=\uno+R_2(u)[\cdot]$
where $R_2$ is a real-to-real remainder 
satisfying 
\begin{equation}\label{achille102}
\|R_2(u)V\|_{H^{s+3}}\lesssim C \|V\|_{H^{s}}\|u\|_{H^{2s_0+7}}\,,
\end{equation}
\begin{equation}\label{achille102tris}
\|R_2(u)V\|_{H^{s+2}}\lesssim C \mathtt{R}^{-1}\|V\|_{H^{s}}\|u\|_{H^{2s_0+7}}\,,
\end{equation}
for some $C>0$ depending on $\|u\|_{H^{2s_0+7}}$,
bounded as $u$ goes to zero;
\item[$(iii)$] for $\mathtt{R}>0$ large enough with respect to 
$\mathtt{r}>0$ in \eqref{regularity}
the map $\uno+R_2(u)$ is invertible
and $(\uno+R_2(u))^{-1}=\uno+\widetilde{R}_2(u)$ with 
\begin{equation}\label{carmen2bis}
\|\widetilde{R}_2(u)V\|_{H^{s+2}}\lesssim C \mathtt{R}^{-1}\|V\|_{H^{s}}\|u\|_{H^{2s_0+7}}\,,
\end{equation}
for some $C>0$ depending on $\|u\|_{H^{2s_0+7}}$,
bounded as $u$ goes to zero. Moreover
$\Phi_2^{-1}(u):=(\uno+\widetilde{R}_2(u))\Psi_2(u)$ satisfies
\begin{equation}\label{carmenbis}
\|\Phi_2^{-1}(u)V\|_{H^{s}}\leq 
\|{V}\|_{H^{s}}\big(1+C\|u\|_{H^{2s_0+7}}\big)\,,
\qquad \forall\, V\in H^{s}(\mathbb{T}^{d};\mathbb{C})\,,
\end{equation}
for some $C>0$ depending on $\|u\|_{H^{2s_0+7}}$, bounded as $u$ goes to zero;

\smallskip
\item[$(iv)$] for almost any $t\in [0,T)$,
one has 
$\pa_{t}\Phi_2(u)[\cdot]=\opbw(\pa_{t}B(x,\x))$
and
\begin{equation}\label{achille112}
|\pa_{t}B(x,\x)|_{\mathcal{N}_{s_0}^{-1}}\lesssim_{\mathtt{r}} C \|u\|_{H^{2s_0+5}}\,,
\qquad
\|\pa_{t}\Phi_2(u)V\|_{H^{s+1}}\lesssim_{\mathtt{r}} C\|V\|_{H^{s}}\|u\|_{H^{2s_0+5}}\,,
\end{equation}
for some $C>0$ depending on $\|u\|_{H^{2s_0+5}}$, bounded as $u$ goes to zero.
\end{enumerate}
\end{lemma}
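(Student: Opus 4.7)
\medskip
\noindent\textbf{Proof plan.} The strategy is parallel to Lemma \ref{propMAPPA}: first control the symbol $c(x,\x)$, then estimate the various operators through Lemma \ref{azioneSimboo} and Proposition \ref{prop:compo}, and finally invert by a Neumann argument. I would begin by verifying that $c$ defines a well-behaved symbol of order $-1$. Thanks to Hypothesis \ref{hyp1} and estimate \eqref{A1finale2}, on the support of $\mathcal{X}_{\mathtt{R}}(\x)$ the denominator $|\x|^{2}+a_{2}^{(1)}(x,\x)$ is bounded from below by $\mathtt{c}_{1}|\x|^{2}/2$ provided $\mathtt{R}$ is taken large enough with respect to $\mathtt{r}$. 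Combined with \eqref{A1finale3} and the product/quotient rules in Lemma \ref{azioneSimboo}$(i)$, this gives
\[
|c|_{\mathcal{N}^{-1}_{p}}\lesssim C\|u\|_{H^{p+s_0+3}}\,,\qquad p+s_0+3\leq s\,,
\]
with $C$ bounded as $u\to 0$. The parity $b_{1}^{(1)}(x,-\x)=-b_{1}^{(1)}(x,\x)$ together with $a_{2}^{(1)}$ even shows that $B(x,\x)$ in \eqref{simboCCC} is real-to-real in the sense of Remark \ref{simboMatr}, and $B^{2}$ is of order $-2$ with support in $|\x|\geq 5\mathtt{R}/4$.

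Item $(i)$ then follows from Lemma \ref{azioneSimboo}$(ii)$ applied componentwise to $\opbw(B)$ and $\opbw(B^{2})$, since both are of non-positive order. For item $(ii)$ I would expand
\[
\Psi_{2}(u)\Phi_{2}(u)=\uno+\bigl(\opbw(B^{2})-\opbw(B)^{2}\bigr)+\opbw(B^{2})\opbw(B)=:\uno+R_{2}(u)\,.
\]
On each scalar block the commutator $\opbw(B)^{2}-\opbw(B^{2})$ is handled by Proposition \ref{prop:compo}$(ii)$: since $c$ is exactly of the cut-offed form $\mathcal{X}_{\mathtt{R}}\,\tilde c$, the formula \eqref{compositTheta} applies and yields
\[
\opbw(B)^{2}-\opbw(B^{2})=\tfrac{1}{2\ii}\opbw(\{B,B\})-\tfrac{1}{8}\opbw(\s(B,B))+R(B,B)\,,
\]
where $\{B,B\}$, $\s(B,B)$ are (matrices of) symbols of order $-3$ and $-4$ respectively, supported in $|\x|\geq \mathtt{R}$, while $R(B,B)$ obeys \eqref{composit2Theta} and maps $H^{s}\to H^{s+4}$ with the factor $\mathtt{R}^{-1}$. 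The remaining block $\opbw(B^{2})\opbw(B)$ is of order $-3$ by Proposition \ref{prop:compo}$(i)$. Lemma \ref{azioneSimboo}$(ii)$ then delivers \eqref{achille102}; to obtain the sharper \eqref{achille102tris} I would exploit the key observation that every contributing symbol vanishes for $|\x|\leq\mathtt{R}$, so that its $\mathcal{N}^{-2}_{s_{0}}$-norm is controlled by $\mathtt{R}^{-1}$ times its $\mathcal{N}^{-3}_{s_{0}}$-norm, converting one derivative of regularity into the desired factor of $\mathtt{R}^{-1}$.

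For item $(iii)$, bound \eqref{achille102tris} gives $\|R_{2}(u)\|_{\mathcal{L}(H^{s})}\leq 1/2$ as soon as $\mathtt{R}$ is large enough with respect to $\mathtt{r}$, so $(\uno+R_{2}(u))^{-1}=\uno+\wtilde R_{2}(u)$ is defined by Neumann series; writing $\wtilde R_{2}=-R_{2}+R_{2}\wtilde R_{2}$ and iterating the $H^{s+2}$ bound yields \eqref{carmen2bis}, after which \eqref{carmenbis} follows from $\Phi_{2}^{-1}=(\uno+\wtilde R_{2})\Psi_{2}$ and \eqref{stime-descentTOTA2}. Item $(iv)$ is routine: the chain rule gives
\[
\pa_{t}c(x,\x)=(\pa_{u}c)(u;x,\x)[\dot u]+(\pa_{\bar u}c)(u;x,\x)[\dot{\bar u}]\,,
\]
and the estimate \eqref{achille112} follows from Lemma \ref{lem:nonomosimbo} applied to the building blocks of $c$, combined with the Lipschitz regularity $\dot u\in H^{s-2}$ from \eqref{regularity} and the fact that $c$ has order $-1$ so that $\pa_{t}B$ has order $-1$, upgrading the action to a gain of one derivative.

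The main delicate point is Step~3, where one must track the $\mathtt{R}^{-1}$ improvement through the commutator expansion; the key is that, by construction of $\Psi_{2}$, the zero-th and first-order terms in the symbolic composition cancel, and the surviving pieces are all supported in the high-frequency regime $|\x|\gtrsim\mathtt{R}$ where the cut-off function supplies the extra decay.
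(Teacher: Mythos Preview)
Your proof is correct and follows essentially the same route as the paper: bound $|c|_{\mathcal{N}^{-1}_{p}}$ from \eqref{A1finale2}--\eqref{A1finale3}, deduce $(i)$ from \eqref{actionSob}, obtain $(ii)$ by expanding $\Psi_{2}\Phi_{2}$ and applying both parts of Proposition \ref{prop:compo} (the paper simply cites these without writing out the algebra you display), handle $(iii)$ by Neumann series, and $(iv)$ by the chain rule and Lemma \ref{lem:nonomosimbo} as in Lemma \ref{propMAPPA}$(iv)$. One cosmetic point: the ellipticity of $|\x|^{2}+a_{2}^{(1)}$ comes directly from $|\x|^{2}+a_{2}^{(1)}=|\x|^{2}\lambda_{2}$ and \eqref{ellipthypo2}, giving a lower bound $\sqrt{\mathtt{c}_{2}}\,|\x|^{2}$ rather than $\mathtt{c}_{1}|\x|^{2}/2$, but this does not affect the argument.
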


\begin{proof}
$(i)$ By \eqref{A1finale2}, \eqref{A1finale3} and \eqref{simboCCC} we deduce that
\begin{equation}\label{affari}
|c|_{\mathcal{N}^{-1}_{p}}\lesssim_{p} C\|u\|_{H^{p+s_0+3}}\,,\qquad p+s_0+3\leq s\,,
\end{equation}
for some $C>0$ depending on $\|u\|_{H^{p+s_0+3}}$,
bounded as $u$ goes to zero.
The bound \eqref{stime-descentTOTA2} follows by 
\eqref{actionSob} and \eqref{affari}.

\noindent
$(ii)$ By applying Lemma \ref{azioneSimboo}, 
Proposition \ref{prop:compo}, using \eqref{Mappa2}  and \eqref{affari}, 
we obtain the \eqref{achille102}. The \eqref{achille102tris} follows by item $(ii)$
of Proposition \ref{prop:compo}.

\noindent
$(iii)$
This item follows by using Neumann series, the \eqref{regularity}, bound
\eqref{achille102tris} and taking $\mathtt{R}$ large enough to obtain the smallness condition
$\|R_2(u)h\|_{H^{s+2}}\lesssim 1/2 \|V\|_{H^{s}}$.

\noindent
$(iv)$
This item follows by \eqref{simboCCC}, using the explicit formul\ae\,
\eqref{cantolibero3bis}, \eqref{cantolibero3}
and reasoning as in the proof of item $(iv)$
of Lemma \ref{propMAPPA}.
\end{proof}

We are ready to prove the following conjugation result.
\begin{proposition}{\bf (Block-diagonalization at order 1).}\label{prop:blockord1}
Assume \eqref{regularity}, consider the system \eqref{systZZZ}
and set (see \eqref{nuovavariabile})
\begin{equation}\label{nuovavariabile2}
W:=\Phi_2(u)[Z]\,.
\end{equation}
Then 
we have
\begin{equation}\label{systWWW}
\dot{W}=\ii E \opbw
\left(\begin{matrix}
|\x|^{2}+a_2^{(1)}(x,\x)+a_1^{(1)}(x,\x) &0 \vspace{0.3em}\\
0 & |\x|^{2}+a^{(1)}_2(x,\x)+{a_1^{(1)}(x,-\x) }
\end{matrix}
\right)W+\mathcal{R}_2(U)V
\end{equation}
where $a_2^{(1)}(x,\x), a_1^{(1)}(x,\x)$ are given in Proposition \ref{prop:block}
and
the remainder $\mathcal{R}_2$ is real to real and satisfies,
for any $s\geq 2s_0+7$,
 the estimate
\begin{equation}\label{restofinalebis}
\|\mathcal{R}_2(U)V\|_{H^{s}}\lesssim_{\mathtt{r}}  C\|V\|_{H^{s}}\|u\|_{H^{s}}\,,
\end{equation}
for some $C>0$ depending on $\|u\|_{H^{s}}$,
bounded as $u$ goes to zero. 
\end{proposition}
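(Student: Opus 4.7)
The plan is to conjugate the paradifferential system \eqref{systZZZ} by $\Phi_2(u) = \uno + \opbw(B)$ and track, at symbol level, where each order of the resulting operator ends up. First I would differentiate $W = \Phi_2(u) Z$ in time and, using item $(iii)$ of Lemma \ref{propMAPPA2} to write $Z = \Phi_2^{-1}(u) W$, obtain
\[
\dot W \;=\; \Phi_2(u)\,\ii E\,\opbw\!\big(|\xi|^2 \uno + A_2^{(1)} + A_1^{(1)}\big)\,\Phi_2^{-1}(u)\,W \;+\; (\partial_t \Phi_2(u))\,Z \;+\; \Phi_2(u)\,\mathcal{R}(U)\,V.
\]
By items $(i)$, $(iii)$, $(iv)$ of Lemma \ref{propMAPPA2} together with \eqref{restofinale}, the last two summands already satisfy a bound of the type \eqref{restofinalebis}. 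The real task is therefore to expand the conjugation of the paradifferential operator and show that the off-diagonal symbol at order one is killed.

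Second, writing $\Phi_2^{-1} = \uno - \opbw(B) + \opbw(B^2) + \widetilde R_2$ and applying Proposition \ref{prop:compo} (item $(ii)$, since $B$ already carries the cutoff $\mathcal{X}_\mathtt{R}$), one gets, modulo remainders bounded by $\|V\|_{H^s}\|u\|_{H^s}$,
\[
\Phi_2 \cdot \ii E \opbw(M) \cdot \Phi_2^{-1} \;=\; \ii E \opbw(M) + \big[\opbw(B),\,\ii E \opbw(M)\big],
\]
with $M = (|\xi|^2 + a_2^{(1)})\uno + A_1^{(1)}$. Only the scalar leading part $(|\xi|^2 + a_2^{(1)})\uno$ of $M$ contributes to a genuine matrix commutator at principal order; the Poisson-bracket contribution is of order $0$ and is absorbed in the remainder. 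A direct computation gives
\[
B E - E B \;=\; \begin{pmatrix} 0 & -2c \\ 2\,\overline{c} & 0 \end{pmatrix},
\]
so the new $(1,2)$-entry of the order-one symbol becomes $\ii b_1^{(1)} - 2\ii(|\xi|^2 + a_2^{(1)}) c$. By the very definition of $c$ in \eqref{simboCCC} this vanishes on $\{\mathcal{X}_\mathtt{R}(\xi)=1\}$; on the complement, item $(iv)$ of Lemma \ref{azioneSimboo} turns it into an arbitrarily smoothing operator, which is a remainder of the required type. The $(2,1)$-entry is handled identically thanks to the real-to-real structure of $B$ and Remark \ref{simboMatr}.

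Third, I would collect the remaining contributions. The diagonal entries at orders $2$ and $1$ are unchanged modulo order-$0$ symbols arising from Poisson brackets $\{c, |\xi|^2 + a_2^{(1)}\}$, $\{c, a_1^{(1)}\}$ and from the quadratic correction $\opbw(B^2)$ coming from $\Psi_2$; these produce operators bounded on $H^s$ controlled by $\|u\|_{H^s}$ through Lemma \ref{azioneSimboo}, Proposition \ref{prop:compo}, and the symbol bound $|c|_{\mathcal{N}^{-1}_p}\lesssim \|u\|_{H^{p+s_0+3}}$ which follows from \eqref{A1finale2}, \eqref{A1finale3}. The real-to-real structure is preserved at every step, since $B$ has the form \eqref{vinello100} and the whole construction is compatible with the subspace $\mathcal U$ by Remark \ref{rmk:Real}.

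The main obstacle is the bookkeeping of orders. Because $c$ is only of order $-1$, the plain composition formula \eqref{composit2} loses three derivatives in the remainders, which is not enough to close \eqref{restofinalebis} at the highest order. This is precisely why one uses the $\mathcal{X}_\mathtt{R}$-truncated version of the composition (item $(ii)$ of Proposition \ref{prop:compo} with estimate \eqref{composit2Theta}): the extra $\mathtt{R}^{-1}$ gained there, together with choosing $\mathtt{R}$ large with respect to $\mathtt{r}$ as in Lemma \ref{propMAPPA2}$(iii)$, is what makes the remainders of highest regularity order collapse into operators bounded on $H^s$ with the desired quadratic-in-$U$ estimate \eqref{restofinalebis}.
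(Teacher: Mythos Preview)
Your proof is essentially correct and follows the same strategy as the paper: conjugate by $\Phi_2=\uno+\opbw(B)$, compute the symbol-level commutator whose principal off-diagonal contribution $-2\ii c(|\xi|^2+a_2^{(1)})$ cancels $\ii\mathcal{X}_\mathtt{R} b_1^{(1)}$ by the very choice of $c$, and absorb all lower-order pieces (Poisson brackets, $B^2$ corrections, the $\widetilde R_2$ term, $\partial_t\Phi_2$, and $\Phi_2\mathcal{R}(U)$) into a bounded remainder. The paper organizes the same computation slightly differently---it works with $\Psi_2$ instead of $\Phi_2^{-1}$, splits off $(1-\mathcal{X}_\mathtt{R})A_1^{(1)}$ at the outset, and records the new off-diagonal symbol as $d=-2c(|\xi|^2+a_2^{(1)})$ rather than through a matrix commutator---but the content is identical.

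Your final paragraph, however, misidentifies the obstacle. The composition formula \eqref{composit2} gives a remainder $R(a,b)\colon H^s\to H^{s-m_1-m_2+3}$; with $m_1=-1$ for $B$ and $m_2=2$ for $|\xi|^2\uno+A_2^{(1)}$ this is $H^s\to H^{s+2}$, a \emph{gain} of two derivatives, amply sufficient to close \eqref{restofinalebis}. The $\mathtt{R}^{-1}$ factor from \eqref{composit2Theta} is not what makes the conjugation bounds work; it is used earlier, in Lemma \ref{propMAPPA2}$(iii)$, to make $\uno+R_2(u)$ invertible by Neumann series and thereby construct $\Phi_2^{-1}$. Once $\Phi_2^{-1}$ exists with the estimates of Lemma \ref{propMAPPA2}, item $(i)$ of Proposition \ref{prop:compo} alone suffices for the remainder bounds in the conjugation step.
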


\begin{proof}
By \eqref{systZZZ} and \eqref{nuovavariabile2} we have
\[
\dot{W}=\Phi_2(u)\ii E\opbw\big(|\x|^{2}\uno+A_{2}^{(1)}(x,\x)+A_{1}^{(1)}(x,\x)\big)Z
+\Phi_2(u)\mathcal{R}(U)V+(\pa_{t}\Phi_2(u))Z\,.
\]
By item $(iii)$ of Lemma \ref{propMAPPA2}
we can write
$Z=\Phi_2^{-1}(u)W=(\uno+\widetilde{R}_2(u))\Psi_{2}(u)W$ 
 with $\widetilde{R}_2(u)$ satisfying \eqref{carmen2bis}.
Then
we have
\begin{equation}\label{achille15bis}
\dot{W}=\Phi_2(u)\ii E\opbw\Big(
|\x|^{2}\uno +A^{(1)}_{2}(x,\x)
+\mathcal{X}_{\mathtt{R}}(\x)A^{(1)}_{1}(x,\x)\Big)\Psi_2(u)W
+G(u)V\,,
\end{equation}
where (recall \eqref{nuovavariabile}, \eqref{nuovavariabile2})
\[
\begin{aligned}
G(u)&:=\Phi_2(u)
\ii E\opbw\Big(
|\x|^{2}\uno +A^{(1)}_{2}(x,\x)
+A^{(1)}_{1}(x,\x)\Big)
\widetilde{R}_2(u)
\Psi_2(u)\Phi_2(u)\Phi(u)\\
&+
\Phi_2(u)
\ii E\opbw\Big((1-\mathcal{X}_{\mathtt{R}}(\x))A^{(1)}_{1}(x,\x)\Big)
\Psi_2(u)\Phi_2(u)\Phi(u)
\\
&+\Phi_2(u)\mathcal{R}(U)+\pa_t\Phi_2(u)\Phi(u)\,.
\end{aligned}
\]
Using  Lemmata \ref{azioneSimboo}, \ref{propMAPPA}, \ref{propMAPPA2}
one can check that $G(u)$ satisfies the bound \eqref{restofinalebis}.
Reasoning similarly, and recalling \eqref{Mappa2}, we have that
\[
\Phi_2(u)\Big(\ii E\opbw\big(\mathcal{X}_{\mathtt{R}}(\x)
A^{(1)}_{1}(x,\x)\big)\Big)\Psi_2(u)=\ii E\opbw
\big(
\mathcal{X}_{\mathtt{R}}(\x)A^{(1)}_{1}(x,\x)\big)+G_1(u)
\]
for some $G_1(u)$ satisfying \eqref{restofinalebis}.
By Proposition \ref{prop:compo}
we deduce that
\begin{equation}\label{beatoporco}
\begin{aligned}
\Phi_2(u)\ii E\opbw&\Big(|\x|^{2}\uno+
A^{(1)}_{2}(x,\x)\Big)\Psi_2(u)=\\
&=\ii E\opbw\left( |\x|^{2}\uno+
A^{(1)}_{2}(x,\x)+
\left(\begin{matrix}0 & d(x,\x) \\
\ov{d(x,-\x)} & 0\end{matrix}\right)
 \right)+G_2(u)
 \end{aligned}
\end{equation}
where
\begin{equation}\label{beatoporco2}
d(x,\x):=-2c(x,\x)(|\x|^{2}+a_2^{(1)}(x,\x))\,,
\end{equation}
and where  $G_2(u)$ is some bounded remainder satisfying \eqref{restofinalebis}.
Using \eqref{simboCCC} we deduce that
\[
d(x,\x)+\mathcal{X}_{\mathtt{R}}(\x)b_1^{(1)}(x,\x)=0\,.
\]
By the discussion above (recall \eqref{achille15bis})
we have obtained the \eqref{systWWW}.
\end{proof}

\subsection{Proof of Theorem \ref{StimeEnergia}}

In this section we prove the energy estimate \eqref{StimeEnergia2}.
We first need some preliminary results.

\begin{lemma}\label{equivalenza}
Assume \eqref{regularity} 
and consider the function $W$ in \eqref{nuovavariabile2}.
For any $s\in \mathbb{R}$ 
one  has that
\begin{equation}\label{equivalenza2}
\|W\|_{H^{s}}\sim_{\mathtt{r}}\|V\|_{H^s}\,,
\end{equation}
where $\mathtt{r}$ is in \eqref{regularity}.
\end{lemma}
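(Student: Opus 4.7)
The plan is simply to chain together the two change-of-variable estimates that were already established for $\Phi$ in Lemma \ref{propMAPPA} and for $\Phi_2$ in Lemma \ref{propMAPPA2}. Recall that $W=\Phi_2(u)[Z]$ with $Z=\Phi(u)[V]$, so $W$ is obtained from $V$ by composing the two paradifferential maps.

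For the upper bound, I would first apply \eqref{stime-descentTOTA2} to get
$\|W\|_{H^s}\le(1+C\|u\|_{H^{2s_0+3}})\|Z\|_{H^s}$,
and then \eqref{stime-descentTOTA} to get
$\|Z\|_{H^s}\le(1+C\|u\|_{H^{2s_0+3}})\|V\|_{H^s}$.
Under hypothesis \eqref{regularity} the quantity $\|u\|_{H^{2s_0+3}}$ is bounded by $\mathtt{r}$ (since $s\ge 2s_0+7$), so the two constants depend only on $\mathtt{r}$, and composition yields $\|W\|_{H^s}\lesssim_{\mathtt{r}}\|V\|_{H^s}$.

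For the reverse inequality, I would invoke the invertibility statements of items $(iii)$ in the two lemmas, which require $\mathtt{R}>0$ to be chosen large enough with respect to $\mathtt{r}$ in order to make the Neumann series for $(\uno+Q(u))^{-1}$ and $(\uno+R_2(u))^{-1}$ converge. Applying \eqref{carmenbis} to $Z=\Phi_2^{-1}(u)[W]$ and then \eqref{carmen} to $V=\Phi^{-1}(u)[Z]$, and again using \eqref{regularity} to dominate $\|u\|_{H^{2s_0+7}}$ by $\mathtt{r}$, one obtains $\|V\|_{H^s}\lesssim_{\mathtt{r}}\|W\|_{H^s}$. Combining the two directions gives \eqref{equivalenza2}.

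There is no genuine obstacle here: the lemma is essentially a bookkeeping statement that records the combined boundedness and invertibility of the two paradifferential conjugations built in Propositions \ref{prop:block} and \ref{prop:blockord1}. The only subtle point worth emphasising is that the parameter $\mathtt{R}$, which was fixed (as a function of $\mathtt{r}$) to ensure invertibility of both $\uno+Q(u)$ and $\uno+R_2(u)$, must be taken as the larger of the two thresholds provided by Lemmas \ref{propMAPPA}$(iii)$ and \ref{propMAPPA2}$(iii)$; once this is done, the implicit constant in $\sim_{\mathtt{r}}$ depends only on $\mathtt{r}$, as required.
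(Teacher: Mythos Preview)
Your proposal is correct and follows essentially the same approach as the paper: the paper's proof likewise writes $W=\Phi_2(u)\Phi(u)V$ and $V=\Phi^{-1}(u)\Phi_2^{-1}(u)W$, then chains the bounds \eqref{stime-descentTOTA}, \eqref{stime-descentTOTA2}, \eqref{carmen}, \eqref{carmenbis} together, controlling the constants via \eqref{regularity}. Your remark about fixing $\mathtt{R}$ large enough relative to $\mathtt{r}$ is exactly the implicit assumption the paper is working under.
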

\begin{proof}
Recalling \eqref{nuovavariabile}, \eqref{nuovavariabile2} we write
$W=\Phi_2(u)Z=\Phi_2(u)\Phi(u)V$. By Lemmata \ref{propMAPPA}, \ref{propMAPPA2}
we also have $V=\Phi^{-1}(u)\Phi_2^{-1}(u)W$.
By estimates \eqref{stime-descentTOTA}, \eqref{stime-descentTOTA2},
\eqref{carmen} and \eqref{carmenbis} we have
\begin{equation*}
\|W\|_{H^{s}}\leq \|{V}\|_{H^{s}}\big(1+C\|u\|_{H^{2s_0+7}}\big)
\end{equation*}
\begin{equation*}
\|V\|_{H^{s}}\leq \|{W}\|_{H^{s}}\big(1+C\|u\|_{H^{2s_0+7}}\big)
\end{equation*}
for some constant $C$ depending on $\|u\|_{H^{2s_0+7}}$, 
bounded as $u$ goes to zero.
\end{proof}

Our aim is to estimate the norm of $V$ by using that $W$ 
solves the problem
\eqref{systWWW}. 
Let us define the symbol
\begin{equation}\label{LLL1}
\mathcal{L}:=\mathcal{L}(x,\x):=|\x|^{2}
+a_2^{(1)}(x,\x)\,,
\end{equation}
where $a_2^{(1)}(x,\x)$ is given in \eqref{A1finale}.
Notice that, by \eqref{A2tilde}, \eqref{nuovadiag}, we deduce
\begin{equation}\label{LLL1prop}
a_2^{(1)}(x,\x)\neq0 \quad \Rightarrow\quad |\x|\gtrsim\mathtt{R}\,.
\end{equation}

We now study some properties of the operator $T_{\mathcal{L}}$  
defined in \eqref{LLL1}.
\begin{lemma}\label{propLLL}
Assume the \eqref{regularity} and let $\gamma\in \mathbb{R}$, $\gamma>0$. 
Then, for 
$\mathtt{R}>0$ large enough (with respect to $\mathtt{r}>0$ in \eqref{regularity}), the
following holds true.

\noindent
$(i)$ The symbols $\mathcal{L}, (1+\mathcal{L})^{\pm\gamma}$
satisfy
\begin{equation}\label{LLLmeno1}
|\mathcal{L}|_{\mathcal{N}_{s_0}^{2}}+
|(1+\mathcal{L})^{\gamma}|_{\mathcal{N}_{s_0}^{2\gamma}}+|(1+\mathcal{L})^{-\gamma}|_{\mathcal{N}_{s_0}^{-2\gamma}}\leq
1+C\|u\|_{H^{2s_0+1}}\,,
\end{equation}
for some $C>0$ depending on $\|u\|_{H^{2s_0+1}}$,
bounded as $u$ goes to zero.

\noindent
$(ii)$
For any $s\in \mathbb{R}$ and any $h\in H^{s}(\mathbb{T}^{d};\mathbb{C})$, 
one has
\begin{equation}\label{stimaTL}
\|T_{\mathcal{L}}h\|_{H^{s-2}}+\|T_{(1+\mathcal{L})^{\gamma}}h\|_{H^{s-2\gamma}}
+\|T_{(1+\mathcal{L})^{-\gamma}}h\|_{H^{s+2\gamma}}
\leq \|h\|_{H^{s}}(1+C\|u\|_{H^{2s_0+1}})\,,
\end{equation}
\begin{equation}\label{stimaTLtris}
\|[{T}_{(1+\mathcal{L})^{\gamma}}, T_{\mathcal{L}}]h\|_{H^{s-2\gamma}}
\lesssim C \|h\|_{H^{s}}\|u\|_{H^{2s_0+5}}\,,
\end{equation}
for some $C>0$ depending on $\|u\|_{H^{2s_0+5}}$,
bounded as $u$ goes to zero.

\noindent
$(iii)$ 
One has that 
$T_{(1+\mathcal{L})^{-\gamma}}T_{(1+\mathcal{L})^{\gamma}}
=\uno+R(u)[\cdot]$
and, for any $s\in\mathbb{R}$ and any $h\in H^{s}(\mathbb{T}^{d};\mathbb{C})$,
\begin{align}
\|R(u) h\|_{H^{s+2}}&\lesssim C\|h\|_{H^{s}}\|u\|_{H^{2s_0+5}}\,,\label{LLLmeno2}\\
\|R(u) h\|_{H^{s+1}}&\lesssim C\mathtt{R}^{-1}\|h\|_{H^{s}}\|u\|_{H^{2s_0+5}}\,,\label{LLLmeno2bis}
\end{align}
for some $C>0$ depending on $\|u\|_{H^{2s_0+5}}$,
bounded as $u$ goes to zero.

\noindent
$(iv)$ For $\mathtt{R}\gg \mathtt{r}$ sufficiently large, 
the operator $T_{(1+\mathcal{L})^{\gamma}}$ 
has a left-inverse $T_{(1+\mathcal{L})^{\gamma}}^{-1}$.
For any $s\in\mathbb{R}$ one has
\begin{equation}\label{stimaLLLinv}
\|T_{(1+\mathcal{L})^{\gamma}}^{-1}h\|_{H^{s+2\gamma}}\leq
\|h\|_{H^{s}}(1+C\|u\|_{H^{2s_0+5}})\,,
\qquad \forall\, h\in H^{s}(\mathbb{T}^{d};\mathbb{C})\,,
\end{equation}
for some $C>0$ depending on $\|u\|_{H^{2s_0+5}}$,
bounded as $u$ goes to zero.

\noindent
$(v)$ 
For almost any $t\in [0,T)$ one have
\begin{equation}\label{LLLtempo}
|\pa_{t}a_2^{(1)}|_{\mathcal{N}^{2}_{s_0}}\lesssim_{\mathtt{r}} C\|u\|_{H^{2s_0+3}}\,.
\end{equation}
Moreover
\begin{equation}\label{LLLtempo2}
\|(T_{\pa_t(1+\mathcal{L})^{\gamma}})h\|_{H^{s-2\gamma}}\lesssim_{\mathtt{r}} C
\|h\|_{H^{s}}\|u\|_{H^{2s_0+3}}\,,
\qquad \forall\, h\in H^{s}(\mathbb{T}^{d};\mathbb{C})\,,
\end{equation}
for some $C>0$ depending on $\|u\|_{H^{2s_0+3}}$,
bounded as $u$ goes to zero.

\noindent
$(vi)$ The operators $T_{\mathcal{L}}$, 
$T_{\mathcal{L}^{-1}}$ are self-adjoint 
with respect to the $L^2$-scalar product \eqref{scalarL}.
\end{lemma}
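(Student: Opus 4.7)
The six items are complementary facts about the second-order symbol $\mathcal{L}=|\xi|^{2}+a_2^{(1)}$, and my plan is to dispatch each by combining the symbolic calculus of Section \ref{basicPara} with the information on $a_2^{(1)}$ coming from Proposition \ref{prop:block} and the ellipticity provided by Hypothesis \ref{hyp1}. The structural observation I will use repeatedly is that $(1+\mathcal{L})^{\pm\gamma}$ and $\mathcal{L}$ are both functions of $\mathcal{L}$, so their Poisson bracket vanishes pointwise; together with $\sigma(a,b)=\sigma(b,a)$, this forces extra cancellations in commutators and ``inverse'' compositions, and is precisely what will let me absorb two derivatives where generic symbolic calculus gives only one.

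For item $(i)$ the bound on $|\mathcal{L}|_{\mathcal{N}_{s_0}^{2}}$ is immediate from \eqref{A1finale2}. To control $(1+\mathcal{L})^{\pm\gamma}$ I first verify that $1+\mathcal{L}$ is uniformly elliptic of order $2$: by \eqref{A2tilde}--\eqref{nuovadiag} and Hypothesis \ref{hyp1} one has $\lambda_2(x,\xi)\sim 1$, and on the support of $a_2^{(1)}$ one has $\langle\xi\rangle\gtrsim\mathtt{R}$, so $c\langle\xi\rangle^{2}\le 1+\mathcal{L}\le C\langle\xi\rangle^{2}$ once $\mathtt{R}$ is large enough. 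The Fa\`a di Bruno formula applied to $z\mapsto(1+z)^{\pm\gamma}$ composed with $\mathcal{L}$, together with the Leibniz-type bound \eqref{prodSimboli}, then produces \eqref{LLLmeno1}. Item $(ii)$ follows at once from $(i)$ via \eqref{actionSob}. For the commutator \eqref{stimaTLtris} I invoke Proposition \ref{prop:compo}: since $\{(1+\mathcal{L})^{\gamma},\mathcal{L}\}\equiv 0$ and $\sigma$ is symmetric in its arguments, only the residual maps $R((1+\mathcal{L})^{\gamma},\mathcal{L})-R(\mathcal{L},(1+\mathcal{L})^{\gamma})$ survive, and these send $H^{s}$ to $H^{s-2\gamma+1}\hookrightarrow H^{s-2\gamma}$ by \eqref{composit2}, at the price of the regularity budget $2s_0+5$.

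Item $(iii)$ is the core of the argument. Applying Proposition \ref{prop:compo} to $T_{(1+\mathcal{L})^{-\gamma}}T_{(1+\mathcal{L})^{\gamma}}$, the principal symbol is $T_{1}=\uno$, the Poisson bracket term vanishes again, and I am left with $R(u)=-\tfrac18 T_{\sigma((1+\mathcal{L})^{-\gamma},(1+\mathcal{L})^{\gamma})}+\widetilde R$, with $\widetilde R$ of order $-3$, which already yields \eqref{LLLmeno2}. For the $\mathtt{R}^{-1}$ gain in \eqref{LLLmeno2bis} I exploit the crucial fact that every summand of $\sigma(f(\mathcal{L}),g(\mathcal{L}))$ carries at least one $x$-derivative of $\mathcal{L}=|\xi|^{2}+a_2^{(1)}$; since $\partial_{x}|\xi|^{2}=0$ and, by \eqref{LLL1prop}, $\partial_{x}a_2^{(1)}$ vanishes for $|\xi|\lesssim\mathtt{R}$, the symbol $\sigma$ is supported in $|\xi|\gtrsim\mathtt{R}$. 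On that support an order-$(-2)$ estimate upgrades to an order-$(-1)$ estimate at the cost of a factor $\mathtt{R}^{-1}$, because $\langle\xi\rangle^{-2}\le\mathtt{R}^{-1}\langle\xi\rangle^{-1}$ there; \eqref{actionSob} applied with $m=-1$ then delivers \eqref{LLLmeno2bis}. Item $(iv)$ is then a Neumann series argument: by \eqref{LLLmeno2bis}, the operator norm of $R(u)$ on $H^{s}$ is $\lesssim C\mathtt{R}^{-1}\|u\|_{H^{2s_0+5}}$, which is below $1/2$ once $\mathtt{R}$ is chosen large with respect to $\mathtt{r}$ in \eqref{regularity}; thus $(\uno+R(u))^{-1}$ exists and $T_{(1+\mathcal{L})^{\gamma}}^{-1}:=(\uno+R(u))^{-1}T_{(1+\mathcal{L})^{-\gamma}}$ is a left inverse, with the bound \eqref{stimaLLLinv} coming from \eqref{stimaTL}.

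Items $(v)$ and $(vi)$ are short. For $(v)$, differentiating in time through the chain rule gives $\partial_{t}a_2^{(1)}=(\partial_{u}a_2^{(1)})[\dot u]+(\partial_{\bar u}a_2^{(1)})[\dot{\bar u}]$; by \eqref{regularity} both $\dot u,\dot{\bar u}$ lie in $H^{2s_0+5}$, so Lemma \ref{lem:nonomosimbo} applied to the symbol $a_2^{(1)}$ yields \eqref{LLLtempo}, and Fa\`a di Bruno together with \eqref{actionSob} then gives \eqref{LLLtempo2}. For $(vi)$, by \eqref{A1finale} the symbol $a_2^{(1)}$ is real-valued, hence so are $\mathcal{L}$ and $\mathcal{L}^{-1}$, and self-adjointness with respect to \eqref{scalarL} is an immediate application of the criterion stated in Remark \ref{strutAlg}. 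The delicate step of the entire proof is clearly item $(iii)$: extracting the quantitative $\mathtt{R}^{-1}$ gain -- which is what drives the Neumann inversion in $(iv)$ and, ultimately, the whole energy estimate of Theorem \ref{StimeEnergia} -- relies not on any smallness of $u$ but purely on the microlocal support property of $\partial_{x}a_2^{(1)}$.
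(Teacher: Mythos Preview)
Your approach matches the paper's: both exploit that $(1+\mathcal{L})^{\pm\gamma}$ and $\mathcal{L}$ are functions of $\mathcal{L}$, so the Poisson brackets vanish, and both draw the $\mathtt{R}^{-1}$ gain from the support property \eqref{LLL1prop}; items $(i)$, $(ii)$, $(iv)$, $(v)$, $(vi)$ are handled exactly as in the paper.

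There is, however, a small but genuine gap in your item $(iii)$. You write $R(u)=-\tfrac18 T_{\sigma}+\widetilde R$ and argue the $\mathtt{R}^{-1}$ gain only for $T_{\sigma}$, via the support of $\sigma$ in $\{|\xi|\gtrsim\mathtt{R}\}$. But \eqref{LLLmeno2bis} also requires $\|\widetilde R h\|_{H^{s+1}}\lesssim C\mathtt{R}^{-1}\|h\|_{H^{s}}\|u\|_{H^{2s_0+5}}$, and the bare statement that $\widetilde R$ is of order $-3$ yields no factor $\mathtt{R}^{-1}$; the embedding $H^{s+3}\hookrightarrow H^{s+1}$ loses the gain entirely. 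The fix uses your own observation: inspecting the proof of Proposition \ref{prop:compo}, the residual $\widetilde R$ is built from the terms $g_4,g_5,g_6$ (and the Taylor tails of $g_1,g_2$), each of which carries at least one $\partial_x$-derivative of the symbols, so $\widetilde R$ is likewise supported in $\{|\xi|\gtrsim\mathtt{R}\}$ and gains $\mathtt{R}^{-1}$ by the same mechanism as $T_{\sigma}$. The paper sidesteps this decomposition by appealing directly to \eqref{composit2Theta}, noting via \eqref{LLL1prop} that the proof of that estimate (which only needs the remainder to live where $|\xi|\gtrsim\mathtt{R}$) applies verbatim here.
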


\begin{proof}
$(i)$ It follows by \eqref{A1finale2} and \eqref{LLL1}.

\noindent
$(ii)$ The bound \eqref{stimaTL} follows by Lemma \ref{azioneSimboo} 
and \eqref{LLLmeno1}. Let us check the \eqref{stimaTLtris}.
By Proposition \ref{prop:compo} we deduce that 
(recall formul\ae\, \eqref{PoissonBra}, \eqref{PoissonBra22})
\[
[{T}_{(1+\mathcal{L})^{\gamma}}, T_{\mathcal{L}}]=
\opbw\big(\frac{1}{\ii }\big\{(1+\mathcal{L})^{\gamma}, \mathcal{L}\big\}\big)+R(u)
\]
where the remainder $R(u)$ satisfies (see \eqref{composit2} and \eqref{LLLmeno1})
\[
\|R(u)h\|_{H^{s-2\gamma+1}}\lesssim C\|h\|_{H^{s}}\|u\|_{2s_0+5}\,,
\]
for some $C>0$ depending on $\|u\|_{2s_0+5}$, bounded as $u$ goes to zero.
By an explicit computation, recalling \eqref{LLL1} 
one gets $\{(1+\mathcal{L})^{\gamma},\mathcal{L}\}=0$.
This implies the \eqref{stimaTLtris}.

\noindent
$(iii)$ This item follows by applying Proposition \ref{prop:compo} and using that
  $\{(1+\mathcal{L})^{-\gamma}, (1+\mathcal{L})^{\gamma}\}=0$.
 The bound \eqref{LLLmeno2} follows by \eqref{prodSimboli} 
 (with $a\rightsquigarrow(1+\mathcal{L})^{-\gamma}$, 
 $b\rightsquigarrow(1+\mathcal{L})^{\gamma}$), \eqref{actionSob}, 
 \eqref{LLLmeno1} and \eqref{composit2}.
 The \eqref{LLLmeno2bis}  follows by \eqref{composit2Theta}
 using the \eqref{LLL1prop}.
 
 \noindent
 $(iv)$ To prove this item
we use Neumann series. We define (using item $(iii)$)
\[
T_{(1+\mathcal{L})^{\gamma}}^{-1}
:=(\uno+R(u))^{-1}T_{(1+\mathcal{L})^{-\gamma}}\,,
\qquad (\uno+R(u))^{-1}:=\sum_{k=0}^{\infty}(-1)^{k}(R(u))^{k}\,.
\]
Using \eqref{LLLmeno2bis} and taking $\mathtt{R}$ sufficiently large
 one can check that 
\[
\|(\uno+R(u))^{-1}h\|_{H^{s}}\lesssim \|h\|_{H^{s}}(1+C\|u\|_{H^{2s_0+5}})\,,
\]
for some $C>0$ depending on $\|u\|_{H^{2s_0+5}}$, bounded as $u$ goes to zero.
The bound above together with \eqref{stimaTL}
implies the \eqref{stimaLLLinv}.

\noindent
$(v)$ By \eqref{nuovadiag} we have
\[
\pa_{t}\tilde{a}_{2,\mathtt{R}}^{+}=\frac{1}{2\lambda_2}\big(
2(1+\widetilde{a}_{2,\mathtt{R}})\pa_t\widetilde{a}_{2,\mathtt{R}}
- \pa_{t}\widetilde{b}_{2,\mathtt{R}}
\ov{\widetilde{b}_{2,\mathtt{R}}}-\widetilde{b}_{2,\mathtt{R}}
\pa_{t}\ov{\widetilde{b}_{2,\mathtt{R}}}
\big)\,.
\]
Moreover, recalling \eqref{A2tilde}, \eqref{simboa2}, the hypotheses of Lemma
\ref{lem:nonomosimbo} are satisfied. Therefore, using \eqref{simboStima2} 
and \eqref{regularity}, we deduce
\[
|\pa_{t}\widetilde{a}_{2,\mathtt{R}}|_{\mathcal{N}_{s_0}^{0}}\lesssim_{\mathtt{r}} C\|u\|_{H^{2s_0+3}}\,.
\]
Similarly one can prove the same estimate for $\widetilde{b}_2$.
Hence the \eqref{LLLtempo} follows.
The \eqref{LLLtempo} and \eqref{actionSob} imply the \eqref{LLLtempo2}.

\noindent
$(vi)$
Item $(vi)$ follows by \eqref{simboAggiunto}
since $\mathcal{L}, \mathcal{L}^{-1}$ are real valued.
\end{proof}

In the following we shall construct the \emph{energy norm}. By using this norm we are able to achieve the \emph{energy estimates} on the previously diagonalized system. This \emph{energy norm} is equivalent to the Sobolev one.
For $s\in \mathbb{R}$, $s\geq2s_0+7 $ we define 
\begin{equation}\label{def:VVTL}
w_{\gamma}:=T_{(1+\mathcal{L})^{\gamma}}w\,,\qquad 
W_{\gamma}:=\vect{w_\gamma}{\ov{w_{\gamma}}}
=T_{(1+\mathcal{L})^{\gamma}}\uno W\,,\quad W=\vect{w}{\bar{w}}\,,\quad \gamma:=\frac{s}{2}\,.
\end{equation}

\begin{lemma}[Equivalence of the energy norm]\label{bowie:lemm}
Assume \eqref{regularity}. Then,  for  $\mathtt{R}>0$ large enough, one has
\begin{equation}\label{equivNorme}
\|w_{\gamma}\|_{L^{2}}\sim_{\mathtt{r}}\|w\|_{H^{s}}\,,
\end{equation}
where $\mathtt{r}$ is given in \eqref{regularity}.
\end{lemma}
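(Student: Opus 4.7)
The plan is to use items $(ii)$ and $(iv)$ of Lemma \ref{propLLL} directly with the specific choice $\gamma=s/2$. The key observation is that $s-2\gamma=0$ with this choice, so both bounds in \eqref{stimaTL} and \eqref{stimaLLLinv} become comparisons between an $L^2$ norm on one side and an $H^s$ norm on the other side.

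For the first inequality, I would apply \eqref{stimaTL} with the specific exponent $\gamma=s/2$:
\begin{equation*}
\|w_{\gamma}\|_{L^2} = \|T_{(1+\mathcal{L})^{s/2}} w\|_{H^{s-2\gamma}} \leq \|w\|_{H^s}\bigl(1+C\|u\|_{H^{2s_0+1}}\bigr).
\end{equation*}
Under \eqref{regularity} the quantity $\|u\|_{H^{2s_0+1}}$ is controlled by $\mathtt{r}$, so this yields $\|w_\gamma\|_{L^2} \lesssim_{\mathtt{r}} \|w\|_{H^s}$.

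For the reverse inequality, I would invoke item $(iv)$ of Lemma \ref{propLLL}: for $\mathtt{R}$ large enough the left inverse $T_{(1+\mathcal{L})^{s/2}}^{-1}$ exists, and by the definition \eqref{def:VVTL} one has $w = T_{(1+\mathcal{L})^{s/2}}^{-1} w_\gamma$. Applying \eqref{stimaLLLinv} with $\gamma=s/2$,
\begin{equation*}
\|w\|_{H^s} = \|T_{(1+\mathcal{L})^{s/2}}^{-1} w_\gamma\|_{H^{2\gamma}} \leq \|w_\gamma\|_{L^2}\bigl(1+C\|u\|_{H^{2s_0+5}}\bigr),
\end{equation*}
and again under \eqref{regularity} the right-hand factor is bounded by a constant depending only on $\mathtt{r}$. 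Combining both estimates gives \eqref{equivNorme}.

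No real obstacle arises since the hard work has been done in Lemma \ref{propLLL}: the equivalence is essentially a restatement of items $(ii)$ and $(iv)$ at the matched exponent $\gamma=s/2$, together with the invertibility condition that requires $\mathtt{R}$ to be chosen large with respect to $\mathtt{r}$ (inherited from item $(iv)$). The only thing to verify is that the \emph{same} threshold on $\mathtt{R}$ used in item $(iv)$ suffices here, which is automatic.
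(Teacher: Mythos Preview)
Your proof is correct and follows essentially the same approach as the paper, which also reduces the statement to estimates \eqref{stimaTL} and \eqref{stimaLLLinv} from Lemma~\ref{propLLL}. The paper's own proof is terse (it just invokes those two estimates and refers to the argument of Lemma~\ref{equivalenza}), and your write-up simply makes the substitution $\gamma=s/2$ explicit.
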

\begin{proof}
It follows by using estimates \eqref{stimaTL}, \eqref{stimaLLLinv}
and reasoning as in the proof of Lemma \ref{equivalenza}.
\end{proof}

Notice that, by using Lemma \ref{azioneSimboo} (see \eqref{azionecufoffoff}) 
and by \eqref{restofinalebis}, 
the \eqref{systWWW} is equivalent to (recall \eqref{LLL1})
\begin{equation}\label{systZZZBis}
\pa_{t}w=\ii T_{\mathcal{L}}w+\ii T_{a_{1,\mathtt{R}}^{(1)}}w+\mathcal{Q}_1(u)v+
\mathcal{Q}_2(u)\bar{v}\,,\quad W:=\vect{w}{\bar{w}}\,,
\end{equation}
where 
\begin{equation}\label{QQQ1}
\begin{aligned}
&a_{1,\mathtt{R}}^{(1)}(x,\x):=a_{1}^{(1)}(x,\x)\mathcal{X}_{\mathtt{R}}(\x)\,,\\
&\|\mathcal{Q}_i(u)h\|_{H^{s}}\lesssim_{\mathtt{r}} C\|h\|_{H^{s}}\|u\|_{H^{s}}\,,\;\;\; \forall \, h\in H^{s}(\mathbb{T}^{d};\mathbb{C})\,,\; i=1,2\,,\;\; s\geq2s_0+7\,,
\end{aligned}
\end{equation}
for some constant $C>0$ depending on $\|u\|_{H^{s}}$,
bounded as $u$ goes to zero. 

\begin{lemma}\label{equaVVnn}
Recall \eqref{systZZZBis}.
One has that the function $w_{\gamma}$ defined in \eqref{def:VVTL}
solves the problem
\begin{equation}\label{equa:VVTL}
\pa_{t}w_{\gamma}=\ii T_{\mathcal{L}}w_{\gamma}
+\ii \mathcal{A}_{\gamma}(u)w_{\gamma}+\mathcal{B}_{\gamma}(u)w_{\gamma}+\mathcal{R}_{\gamma}(u)[V]\,,
\end{equation}
where 
\begin{align}
&\mathcal{A}_{\gamma}(u):=T_{(1+\mathcal{L})^{\gamma}}
T_{a_{1,\mathtt{R}}^{(1)}}(T_{(1+\mathcal{L})^{\gamma}})^{-1}\,,\qquad 
\mathcal{B}_{\gamma}(u):=
T_{\pa_{t}(1+\mathcal{L})^{\gamma}}
(T_{(1+\mathcal{L})^{\gamma}})^{-1}\,,\label{AAAn}
\end{align}
and where $\mathcal{R}_{\gamma}$ satisfies
\begin{equation}\label{stimeRRRN}
\|\mathcal{R}_{\gamma}(u)V\|_{L^{2}}\lesssim_{\mathtt{r}} C\|V\|_{H^{s}} \|u\|_{H^{s}}\,,
\end{equation}
for some $C>0$ depending on $\|u\|_{H^{s}}$, 
bounded as $u$ goes to zero.
\end{lemma}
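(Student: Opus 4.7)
The plan is to differentiate $w_\gamma := T_{(1+\mathcal{L})^\gamma} w$ with respect to $t$, substitute the equation \eqref{systZZZBis} for $\pa_t w$, and re-express everything in terms of $w_\gamma$ via the left inverse $T_{(1+\mathcal{L})^\gamma}^{-1}$ constructed in item $(iv)$ of Lemma \ref{propLLL} (available once $\mathtt{R}$ is taken large enough). Using $\pa_t T_{(1+\mathcal{L})^\gamma} = T_{\pa_t(1+\mathcal{L})^\gamma}$ (the time derivative acts only on the symbol), this yields
\begin{equation*}
\pa_t w_\gamma = T_{\pa_t(1+\mathcal{L})^\gamma} w + T_{(1+\mathcal{L})^\gamma}\bigl(\ii T_{\mathcal{L}} w + \ii T_{a_{1,\mathtt{R}}^{(1)}} w + \mathcal{Q}_1(u) v + \mathcal{Q}_2(u) \bar v\bigr).
\end{equation*}

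Next, I would rearrange each summand. For the principal one, write
\begin{equation*}
T_{(1+\mathcal{L})^\gamma} T_{\mathcal{L}} w = T_{\mathcal{L}} w_\gamma + [T_{(1+\mathcal{L})^\gamma}, T_{\mathcal{L}}]\, T_{(1+\mathcal{L})^\gamma}^{-1} w_\gamma,
\end{equation*}
using $w = T_{(1+\mathcal{L})^\gamma}^{-1} w_\gamma$: the first term is the leading $\ii T_{\mathcal{L}} w_\gamma$ sought in \eqref{equa:VVTL}, while the commutator is swept into $\mathcal{R}_\gamma$. Inserting $\uno = T_{(1+\mathcal{L})^\gamma}^{-1} T_{(1+\mathcal{L})^\gamma}$ in the subprincipal and time-derivative terms recovers the definitions \eqref{AAAn}, i.e. $T_{(1+\mathcal{L})^\gamma} T_{a_{1,\mathtt{R}}^{(1)}} w = \mathcal{A}_\gamma(u) w_\gamma$ and $T_{\pa_t(1+\mathcal{L})^\gamma} w = \mathcal{B}_\gamma(u) w_\gamma$. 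What is left over is
\begin{equation*}
\mathcal{R}_\gamma(u)[V] := \ii\,[T_{(1+\mathcal{L})^\gamma}, T_{\mathcal{L}}]\, T_{(1+\mathcal{L})^\gamma}^{-1} w_\gamma + T_{(1+\mathcal{L})^\gamma}\bigl(\mathcal{Q}_1(u)v + \mathcal{Q}_2(u)\bar v\bigr),
\end{equation*}
which, via the chained equivalences $\|w_\gamma\|_{L^2}\sim \|w\|_{H^s}\sim \|V\|_{H^s}$ from Lemmas \ref{bowie:lemm} and \ref{equivalenza}, depends linearly on $V$.

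For the $L^2$ bound \eqref{stimeRRRN}, the $\mathcal{Q}_i$ contributions are routine: $T_{(1+\mathcal{L})^\gamma}$ maps $H^s \to L^2$ with $2\gamma = s$ by \eqref{stimaTL}, and \eqref{QQQ1} provides $\|\mathcal{Q}_i(u) v\|_{H^s} \lesssim \|v\|_{H^s}\|u\|_{H^s}$. The decisive step is the commutator term: by \eqref{stimaLLLinv}, $T_{(1+\mathcal{L})^\gamma}^{-1} w_\gamma$ lies in $H^s$ with norm $\lesssim \|w_\gamma\|_{L^2}$, and by \eqref{stimaTLtris} the commutator maps $H^s \to H^{s-2\gamma} = L^2$ with operator norm $\lesssim \|u\|_{H^{2s_0+5}}\lesssim \|u\|_{H^s}$ since $s \geq 2s_0+7$.

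The main obstacle, and the essential reason the scheme works, is that $[T_{(1+\mathcal{L})^\gamma}, T_{\mathcal{L}}]$ is \emph{a priori} of order $2\gamma$, no better than $T_{(1+\mathcal{L})^\gamma}$ itself, which would be fatal at the top of the energy hierarchy. The saving fact, already invested in \eqref{stimaTLtris}, is that the Poisson bracket $\{(1+\mathcal{L})^\gamma, \mathcal{L}\}$ vanishes identically because $(1+\mathcal{L})^\gamma$ is a function of $\mathcal{L}$. Consequently the commutator's operator norm is controlled by a Sobolev norm of $u$ strictly weaker than $\|u\|_{H^s}$, which is exactly what permits its absorption into $\mathcal{R}_\gamma$ with the required quadratic bound $\|V\|_{H^s}\|u\|_{H^s}$ instead of producing a top-order contribution that would destroy the forthcoming Gronwall argument.
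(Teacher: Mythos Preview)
Your proof is correct and follows essentially the same approach as the paper: differentiate $w_\gamma$, substitute \eqref{systZZZBis}, and absorb the commutator $[T_{(1+\mathcal{L})^\gamma}, T_{\mathcal{L}}]$ and the $\mathcal{Q}_i$ terms into $\mathcal{R}_\gamma$ using \eqref{stimaTLtris}, \eqref{stimaTL}, \eqref{stimaLLLinv} and \eqref{QQQ1}. The paper's proof is more terse and cites \eqref{stime-descentTOTA}, \eqref{stime-descentTOTA2} directly for the $V\leftrightarrow w$ norm equivalence, whereas you invoke the packaged equivalences of Lemmas \ref{equivalenza} and \ref{bowie:lemm}; these are the same content.
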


\begin{proof}
By differentiating \eqref{def:VVTL} and using \eqref{systZZZBis}
we get the \eqref{equa:VVTL} with $\mathcal{A}_{\gamma}(u)$, $\mathcal{B}_{\gamma}(u)$
as in \eqref{AAAn} up to a remainder $\mathcal{R}_{\gamma}$.
%
The estimate \eqref{stimeRRRN} follows by 
\eqref{stimaTLtris}, \eqref{stime-descentTOTA2},
\eqref{stime-descentTOTA}, \eqref{stimaTL}
and \eqref{QQQ1}.
\end{proof}

\begin{proof}[{\bf Proof of Theorem \ref{StimeEnergia}}]
We estimate the $L^{2}$-norm
of $w_{\gamma}$ satisfying \eqref{equa:VVTL}.
Recalling  \eqref{scalarL}, we have
\begin{equation}\label{napoli3}
\begin{aligned}
\pa_{t}\|w_{\gamma}\|_{L^{2}}^{2}&\sim
{\rm Re}(\ii \mathcal{A}_{\gamma}(u)w_{\gamma},w_{\gamma})_{L^{2}}+
{\rm Re}( \mathcal{B}_{\gamma}(u)w_{\gamma},w_{\gamma})_{L^{2}}+
{\rm Re}( \mathcal{R}_{\gamma}(u)V,w_{\gamma})_{L^{2}}\,,
\end{aligned}
\end{equation}
where we have used that ${\rm Re}(\ii T_{\mathcal{L}}w_{\gamma},w_{\gamma})_{L^{2}}=0$ thanks to item $(vi)$ of Lemma  \ref{propLLL}.
We analyze each summand separately.
First of all we note that
\[
\|\mathcal{B}_{\gamma}(u)w_{\gamma}\|_{L^{2}}
\stackrel{\eqref{LLLtempo2}, \eqref{stimaLLLinv}}{\lesssim_{\mathtt{r}}}
C
\|u\|_{H^{2s_0+5}}\|w_{\gamma}\|_{L^{2}}\,,
\]
for some constant $C>0$ depending on $\|u\|_{H^{2s_0+5}}$,
bounded as $u$ goes to zero.
Hence, by Cauchy-Swartz inequality, we obtain
\begin{equation}\label{napoli4}
{\rm Re}( \mathcal{B}_{\gamma}(u)w_{\gamma},w_{\gamma})_{L^{2}}\lesssim_{\mathtt{r}} C
\|u\|_{H^{2s_0+5}}\|w_{\gamma}\|_{L^{2}}^{2}\,.
\end{equation}
Using \eqref{stimeRRRN} we obtain
\begin{equation}\label{napoli5}
{\rm Re}( \mathcal{R}_{\gamma}(u)V,w_{\gamma})_{L^{2}}\lesssim_{\mathtt{r}} C
\|u\|_{H^{s}}\|w_{\gamma}\|_{L^{2}}\|V\|_{H^{s}}\,,
\end{equation}
for $s\geq 2s_0+7$ and for some $C>0$ depending on  
$\|u\|_{H^{s}}$, bounded as $u$ goes to zero.
We now study the most difficult term, i.e. the one depending on $\mathcal{A}_{\gamma}$.
We write
\begin{equation}\label{napoli7}
\mathcal{A}_{\gamma}(u)=T_{a_{1,\mathtt{R}}^{(1)}}
+\mathcal{C}_{\gamma}(u)\,,
\qquad \mathcal{C}_{\gamma}(u):=
[ T_{(1+\mathcal{L})^{\gamma}},T_{a_{1,\mathtt{R}}^{(1)}}]
(T_{(1+\mathcal{L})^{\gamma}})^{-1}\,.
\end{equation}
By applying Proposition \ref{prop:compo} and using estimates
\eqref{A1finale3}, \eqref{LLLmeno1}, \eqref{composit2} and \eqref{stimaLLLinv}
we obtain
\begin{equation}\label{napoli11}
\|\mathcal{C}_{\gamma}(u)w_{\gamma}\|_{L^{2}}
\lesssim
C
\|u\|_{H^{2s_0+7}}\|w_{\gamma}\|_{L^{2}}\,,
\end{equation}
for some constant $C>0$ depending on $\|u\|_{H^{2s_0+7}}$,
bounded as $u$ goes to zero.
Recall that the symbol $a_{1,\mathtt{R}}^{(1)}$ is 
real valued (see \eqref{A1finale}), then
the operator $T_{a_{1,\mathtt{R}}^{(1)}}$ is self-adjoint 
w.r.t. the scalar product \eqref{scalarL}.
As a consequence we have
\begin{equation}\label{napoli12}
{\rm Re}(\ii \mathcal{A}_{\gamma}(u)w_{\gamma},w_{\gamma})_{L^{2}}
\stackrel{\eqref{napoli7}}{=}
{\rm Re}(\ii \mathcal{C}_{\gamma}(u)w_{\gamma},w_{\gamma})_{L^{2}}
\stackrel{\eqref{napoli11}}{\lesssim}C
\|w_{\gamma}\|_{L^{2}}^{2}
\|u\|_{H^{2s_0+7}}\,.
\end{equation}
By \eqref{napoli3}, \eqref{napoli4}, \eqref{napoli5} and \eqref{napoli12}
we get 
\begin{equation}\label{napoli32}
\pa_{t}\|w_{\gamma}\|_{L^{2}}^{2}\lesssim_{\mathtt{r}} C
\|w_{\gamma}\|_{L^{2}}^{2}
\|u\|_{H^{2s_0+7}}+C\|u\|_{H^{s}}\|w_{\gamma}\|_{L^{2}}\|V\|_{H^{s}}\,.
\end{equation}
By 
\eqref{equivNorme}, \eqref{equivalenza2} the 
\eqref{napoli32} becomes
\[
\pa_{t}\|w_{\gamma}\|_{L^{2}}^{2}\lesssim_{\mathtt{r}} C
\|u\|_{H^{s}}\|v\|^{2}_{H^{s}} \quad \Rightarrow
\quad
\|w_{\gamma}\|_{L^{2}}^{2}\leq \|w_{\gamma}(0)\|_{L^{2}}^{2}+\int_{0}^{t}C
\|u(\tau)\|_{H^{s}}\|v(\tau)\|^{2}_{H^{s}} d\tau\,,
\]
where $C>0$ depends on $\|u\|_{H^{s}},\mathtt{r}$.
By using again the equivalences
\eqref{equivNorme}, \eqref{equivalenza2}
we get the \eqref{StimeEnergia2}.
\end{proof}

In the following we prove the existence of the solution of a linear problem of the form
\begin{equation}\label{QLNLSV2020}
\left\{\begin{aligned}
&\dot{V}=\ii E\opbw\big(|\x|^{2}\uno +A_{2}(x,\x)+A_{1}(x,\x)\big)V+R_1(U)V+R_2(U)U\,,\\
&V(0)=V_0:=U(0)\,,
\end{aligned}\right.
\end{equation}
where the para-differential part is assumed to be like in system 
\eqref{QLNLSV} and $R_2(U)U$ has to be considered  as a forcing term, 
the function $U$ satisfies \eqref{regularity} and 
the operators $R_1$ and $R_2$ are bounded.

\begin{lemma}\label{esistenza-gen} Let $s\geq 2(d+1)+7$.
Consider the problem \eqref{QLNLSV2020}, 
assume \eqref{regularity} and 
that the matrices $A_2$, $A_1$ are like the 
ones in system \eqref{QLNLSV}. 
Assume moreover that $R_i$ are real-to-real and
satisfy \eqref{stimaRRR} for $i=1,2$.
  Then there exists a unique solution $V$ of \eqref{QLNLSV} in $ L^{\infty}([0,T); H^{s}(\T^d;\C^2))\cap Lip([0,T); H^{s-2}(\T^d;\C^2))\cap\mathcal{U}$, moreover it satsfies the following estimate
  \begin{equation}\label{stimalineareforza}
  \|V(t)\|_{H^s}\leq \mathtt{C}_1 e^{\mathtt{C}T}\big((1+\mathtt{C}T)
  \|V_0\|_{H^s}+\mathtt{C}T\|U\|_{L^{\infty}([0,T);H^s)}\big)\,,
    \end{equation}
for positive constants $\mathtt{C}_1>0$ depending on $\mathtt{r}$
 in \eqref{regularity} and $\mathtt{C}>0$ depending on
$\|u\|_{H^{s}}, \mathtt{r}$ 
and bounded from above as
$\|u\|_{H^{s}}$ goes to zero.
\end{lemma}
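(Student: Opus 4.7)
The plan is to construct $V$ via a Friedrichs-type regularization, to derive a uniform a priori bound by rerunning the argument of Section \ref{sec:4}, and then to pass to the limit; uniqueness will follow from the same energy inequality applied to the difference of two solutions. For $\varepsilon>0$ let $J_\varepsilon := (1-\varepsilon\Delta)^{-1}$ be the Friedrichs mollifier on $L^2(\T^d;\C^2)$; replacing the right-hand side of \eqref{QLNLSV2020} by
\[
J_\varepsilon\big(\ii E\,\opbw(|\x|^2\uno + A_2(x,\x)+A_1(x,\x)) + R_1(U)\big)J_\varepsilon V + J_\varepsilon R_2(U)U
\]
yields an ODE in $H^s(\T^d;\C^2)$ with bounded, time-dependent generator; standard linear ODE theory in Banach spaces then produces a unique $V_\varepsilon\in C^1([0,T);H^s)$ with $V_\varepsilon(0)=V_0$. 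Since $J_\varepsilon$ is a real Fourier multiplier and the rest of the generator is real-to-real, Remark \ref{rmk:Real} ensures $V_\varepsilon(t)\in\mathcal U$ for every $t\in[0,T)$.

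The crucial step is a uniform-in-$\varepsilon$ energy bound. I would repeat verbatim the block diagonalization of Propositions \ref{prop:block}--\ref{prop:blockord1} and the manipulation of Lemma \ref{equaVVnn} on the mollified system, introducing $w_{\varepsilon,\gamma}:=T_{(1+\mathcal{L})^\gamma}w_\varepsilon$ (with $\gamma=s/2$), and computing $\partial_t\|w_{\varepsilon,\gamma}\|_{L^2}^2$. Three new contributions appear relative to the proof of Theorem \ref{StimeEnergia}: commutators between $J_\varepsilon$ and the para-differential machinery, which are bounded on $H^s$ uniformly in $\varepsilon$ since $J_\varepsilon$ is a Fourier multiplier with symbol bounded by $1$; the term $R_1(U)V_\varepsilon$, which after conjugation by $T_{(1+\mathcal{L})^\gamma}$ is handled by \eqref{stimaTL}, \eqref{stimaLLLinv} and \eqref{stimaRRR} and contributes $\mathtt{C}\|V_\varepsilon\|_{H^s}^2$; and the forcing $R_2(U)U$, contributing $\mathtt{C}\|V_\varepsilon\|_{H^s}\|U\|_{H^s}$. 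Summing these and using Cauchy--Schwarz yields
\begin{equation*}
\partial_t\|V_\varepsilon(t)\|_{H^s}^2 \;\le\; \mathtt{C}\,\|V_\varepsilon(t)\|_{H^s}^2 + \mathtt{C}\,\|V_\varepsilon(t)\|_{H^s}\,\|U(t)\|_{H^s},
\end{equation*}
uniformly in $\varepsilon$, and the Gr\"onwall argument applied in the form $\partial_t\sqrt{y}\le \tfrac{\mathtt{C}}{2}\sqrt{y}+\tfrac{\mathtt{C}}{2}\|U\|_{H^s}$ produces precisely the bound \eqref{stimalineareforza} for $V_\varepsilon$.

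With the uniform bound in hand, the mollified equation forces $\partial_t V_\varepsilon$ to be uniformly bounded in $L^\infty([0,T);H^{s-2})$, so by Aubin--Lions and weak-$\ast$ compactness a subsequence converges strongly in $C^0([0,T);H^{s-\delta})$ for any $\delta\in(0,2]$ and weakly-$\ast$ in $L^\infty([0,T);H^s)$ to some $V$. Linearity together with the continuity of each operator on the appropriate Sobolev space allows passage to the limit in the equation; lower semicontinuity of norms preserves \eqref{stimalineareforza}, and the invariance of $\mathcal U$ is immediate. Uniqueness follows by applying the same energy inequality, with $R_2\equiv0$ and zero initial datum, to the difference of any two solutions.

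The main obstacle I anticipate is controlling the commutators that arise when the mollifier $J_\varepsilon$ is moved across the diagonalizing maps $\Phi$, $\Phi_2$, and $T_{(1+\mathcal{L})^\gamma}$: these must be bounded on $H^s$ uniformly in $\varepsilon$ in order for the energy estimate to close at each $\varepsilon>0$. The fact that $J_\varepsilon$ is a scalar Fourier multiplier with symbol in $[0,1]$, together with Lemma \ref{azioneSimboo} and the order-gain in the composition formula of Proposition \ref{prop:compo}, should make each such commutator a bounded remainder of the same type as $\mathcal{R}_\gamma$ in \eqref{equa:VVTL}, hence absorbable into the right-hand side of the energy inequality.
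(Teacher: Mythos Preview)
Your regularization scheme runs into a genuine difficulty at precisely the point you flag as ``the main obstacle''. The commutators $[J_\varepsilon,\Phi]$, $[J_\varepsilon,\Phi_2]$, $[J_\varepsilon,T_{(1+\mathcal L)^\gamma}]$ are indeed of order $-1$ uniformly in $\varepsilon$ (one checks $|\partial_\xi^\alpha j_\varepsilon|\lesssim\langle\xi\rangle^{-|\alpha|}$ uniformly for $j_\varepsilon(\xi)=(1+\varepsilon|\xi|^2)^{-1}$). But in the equation for $w_{\varepsilon,\gamma}$ these commutators do not appear alone: they are composed with the order-$2$ principal part $\ii E\opbw(|\xi|^2\uno+A_2+A_1)$, producing remainders of order $+1$, not $0$. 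There is no structural reason for these order-$1$ terms to be skew-adjoint at top order (their principal symbols mix $\partial_x S^{-1}$, $\partial_\xi j_\varepsilon$ and the matrix $E$ in a combination with no evident reality or parity), so they contribute a term comparable to $\|w_{\varepsilon,\gamma}\|_{H^{1/2}}^2$ to $\partial_t\|w_{\varepsilon,\gamma}\|_{L^2}^2$, and the inequality does not close. Proposition \ref{prop:compo} tells you the order of the bare commutator, not of the commutator composed with the Laplacian; the Friedrichs commutator lemma that rescues this scheme for first-order symmetric hyperbolic systems is exactly one derivative short here.

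The paper sidesteps the issue by regularizing at the symbol level rather than the operator level: it keeps $|\xi|^2$ intact and truncates only the lower-order matrix, setting $A^\lambda(x,\xi):=(A_2+A_1)(x,\xi)\,\chi(\xi/\lambda)$. The regularized equation is then the free Schr\"odinger flow plus the bounded perturbation $\opbw(A^\lambda)$, so existence for each $\lambda$ follows from Duhamel and a contraction argument. Crucially, $A^\lambda$ inherits all the self-adjointness and parity properties of $A_2+A_1$ with seminorms $|A^\lambda|_{\mathcal N^2_s}$ uniform in $\lambda$, so the block-diagonalization of Propositions \ref{prop:block}--\ref{prop:blockord1} applies \emph{verbatim} to the $\lambda$-system, yielding real diagonal symbols $a_j^{(1,\lambda)}$ bounded uniformly in $\lambda$; no mollifier commutators ever enter, and the energy estimate is literally that of Theorem \ref{StimeEnergia}. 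A limit is extracted by Ascoli--Arzel\`a. Only after this homogeneous flow $\Phi(t)$ is in hand are the bounded terms $R_1(U)V$ and $R_2(U)U$ reinstated, via a Picard series built on $\Phi(t)$---a further difference from your choice of including them in the regularization from the outset.
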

\begin{proof}
Let us consider first the case of the free equation, i.e. we assume for the moment $R_1(U)V=R_2(U)U=0$.
For any $\lambda\geq1$ we consider the following localized matrix
\begin{equation}\label{uomomascherato}
A^{\lambda}(x,\xi):=\Big(
A_2(x,\xi)+A_1(x,\xi)\Big)\chi\Big(\frac{\xi}{\lambda}\Big)\,,
\end{equation}
where $\chi$ is a cut-off function whose support is contained in the ball of center 0 and radius 1. 
Note that the quantities $|A^{\lambda}|_{\mathcal{N}_{s}^{2}}$ 
and $ \lambda^{-2}|A^{\lambda}|_{\mathcal{N}_{s}^{0}}$ 
(where we meant the semi-norm \eqref{normaSimbo} of each entry)
are uniformly bounded in $\lambda\geq 1$.

Let $V_{\lambda}$ be the solution of the following linear Schr\"odinger equation
\begin{equation*}
\begin{cases}
\dot{V_{\lambda}}=\ii E\opbw(|\x|^{2}\uno+A^{\lambda}(x,\xi))V_{\lambda}\\
V_{\lambda}(0)=V_0\,.
\end{cases}
\end{equation*}
For initial data in $H^{s}$ the existence follows by the Duhamel formulation of the problem, 
a classical fixed point argument and using that $ \lambda^{-2}|A^{\lambda}|_{\mathcal{N}_{s}^{0}}$
is uniformly bounded in $\lambda\geq1$ (in other words $\opbw(A^{\lambda})$ is a bounded perturbation with estimates depending on $\lambda$).
The function $V_{\lambda}$ is  continuous with values in $H^{s}$ with estimates which, \emph{a priori}, 
depend on $\lambda$.
We claim that the following estimate, with constants  independent of $\lambda$, holds true:
\begin{equation}\label{bowie}
\|V_{\lambda}(t)\|_{H^s}^2\lesssim_{\mathtt{r}} 
\|V_0\|_{H^s}^2+\int_0^t C\|U(\sigma)\|_{H^s}\|V_{\lambda}(\sigma)\|_{H^s}^2d\sigma\,,
\end{equation}
for some $C>0$ depending $\|U\|_{H^{s}}, \mathtt{r}$ 
and bounded as $U$ goes to $0$.
The proof of the claim follows the lines of Theorem \ref{StimeEnergia}
with $A(x,\x)=A_2(x,\xi)+A_1(x,\xi)$ in \eqref{QLNLSV} replaced by  $A^{\lambda}$
in \eqref{uomomascherato}, we recall below the fundamental steps.
One has to diagonalize the matrix $E(|\x|^{2}\uno+A^{\lambda})$ as done in 
Propositions \ref{prop:block}, \ref{prop:blockord1}
obtaining a new diagonal matrix of order two of the form
\[
\left(\begin{matrix}
|\x|^{2}+a_2^{(1,\lambda)}(x,\x)+a_1^{(1,\lambda)}(x,\x) &0 \vspace{0.3em}\\
0 & |\x|^{2}+a^{(1,\lambda)}_2(x,\x)+{a_1^{(1,\lambda)}(x,-\x) }
\end{matrix}
\right)
\]
where $|a_j^{(1,\lambda)}(x,\x)|_{\mathcal{N}_{s}^{j}}$ are uniformly bounded in $\lambda$ for
$j=1,2$.
At this point, following Lemmata \ref{bowie:lemm}, \ref{equaVVnn},
one obtains the energy estimate \eqref{bowie}
by defining the new variable as in \eqref{def:VVTL}
with $\mathcal{L}$ replaced by $\mathcal{L}^{\lambda}:=|\x|^{2}+a_2^{(1,\lambda)}(x,\x)$.
The constant in \eqref{bowie} is independent of $\lambda$ 
because $|\mathcal{L}^{\lambda}|_{\mathcal{N}_s^{2}}$ is uniformly bounded in $\lambda$.

\noindent
As a consequence of \eqref{bowie} and  Gr\"onwall lemma we have
\begin{equation*}
\|V_{\lambda}(t)\|^2_{H^s}\lesssim_{\mathtt{r}} \| V_0\|_{H^s}^2 \exp\left(\int_{0}^tC\|U(\sigma)\|_{H^s}d\sigma\right),
\end{equation*}
which gives the uniform boundedness of the family in 
$L^{\infty}([0,T);H^{s}(\T^d;\mathbb{C}^{2}))$. This implies that the family is uniformly bounded in $C^0([0,T);H^{s}(\T^d;\C^2))$.
Similarly, by using also \eqref{QLNLSV2020}, one proves that the family is uniformly 
Lipschitz continuous in the space 
$C^0([0,T);H^{s-2}(\T^d;\mathbb{C}^{2}))$, 
therefore one gets, up to subsequences, by Ascoli-Arzel\`a theorem, 
a limit $\Phi(t)U(0)$ in the latter space 
which is a solution of equation \eqref{QLNLSV2020} 
with $R_1(U)V=R_2(U)U=0$. The limit $\Phi(t)U(0)$ is Lipschitz continuous with values in $H^{s-2}$.
Moreover by using \eqref{StimeEnergia2} and Gr\"onwall lemma one obtains
\begin{equation}\label{Gronwall}
\|\Phi(t)V_0\|^2_{H^s}\lesssim_{\mathtt{r}} \| V_0\|_{H^s}^2 \exp\left(\int_{0}^tC\|U(\sigma)\|_{H^s}d\sigma\right),
\end{equation}
i.e. $\|\Phi(t)V_0\|_{H^s}\lesssim_{\mathtt{r}} \| V_0\|_{H^s} e^{\mathtt{C}t}$ 
for some $\mathtt{C}$ 
depending on $\|U\|_{H^{s}}, \mathtt{r}$
 bounded as $U$ goes to $0$. 
The solution $\Phi(t)V_0$ is in $\mathcal{U}$ since $V_0\in \mathcal{U}$
and the matrix of symbols
$A^{\lambda}$ in \eqref{uomomascherato} is real-to-real.

To prove the existence of the solution in the case that $R_1(U)V$ and $R_{2}(U)U$ are non zero we use Picard iterates and we reason as follows.
We define the operator
\begin{equation*}
T(W):=\Phi(t)V_0+\Phi(t)\int_0^t[\Phi(\sigma)]^{-1}\big(R_1(U)W(\sigma)+R_2(U)U(\sigma)\big)d\sigma
\end{equation*}
and the sequence 
\begin{equation*}
\begin{cases}
W_0=\Phi(t) U(0)\\
W_n=T(W_{n-1})\,.
\end{cases}
\end{equation*}
In this way we obtain that 
$\|W_{n+1}-W_n\|_{H^{s}}\leq 
\frac{(\mathtt{C}T)^n}{n!}\|W_1-W_0\|_{H^{s}}$. 
We find a fixed point for the operator 
$T$ as $V=\sum_{n=1}^{\infty} W_{n+1}-W_n +W_0 $, 
the estimate \eqref{stimalineareforza} 
may be obtained by direct computation 
from the definition of  the solution $W$. 
\end{proof}

\begin{remark}\label{stima-migliore}
If $R_2(U)U=0$ in the previous lemma, one gets the better estimate
\begin{equation}\label{bitter}
\|V(t)\|_{H^{s}}\leq \mathtt{C}_1(1+\mathtt{C}T) e^{T\mathtt{C}}\|V_0\|_{H^s}.
\end{equation}
\end{remark}
\section{Proof of the main Theorem \ref{main}}\label{sec:5}
The proof of the Theorem \ref{main} relies on the 
iterative scheme which is described below. 
We recall that, by Proposition \ref{NLSparapara}, the equation \eqref{QLNLS}  
is equivalent to the para-differential system 
\eqref{QLNLS444}.  
We consider the following sequence of Cauchy problems 
\begin{equation*}
\mathcal{P}_1=\begin{cases}
 \partial_t U_1=\ii E\Delta U_1\\
 U_1(0,x)=\tilde{U}_0(x),
\end{cases}
\end{equation*}
where $\tilde{U}_0(x)=(\tilde{u}_0,\bar{\tilde{u}}_0)$ 
is the initial condition of \eqref{QLNLS},
and we define by induction
\begin{equation*}
\mathcal{P}_n=\begin{cases}
 \partial_t U_n=\ii E \opbw(|\xi|^2\uno +A_2(U_{n-1};x,\xi)+A_1(U_{n-1};x,\xi))U_n+R(U_{n-1})U_{n-1}\\
 U_n(0,x)=\tilde{U}_0(x)\,.
\end{cases}
\end{equation*}
In the following lemma we prove that the sequence is well defined, 
moreover the sequence of solutions 
$\{U_n\}_{n\in\N}$ is bounded in 
$H^{s}(\mathbb{T}^{d};\mathbb{C}^{2})$ and converging in 
$H^{s-2}(\mathbb{T}^{d};\mathbb{C}^{2})$.
\begin{lemma}\label{lemma-iterativo}
Fix $\tilde{U}_0\in H^{s}(\mathbb{T}^{d};\mathbb{C}^{2})\cap\mathcal{U}$ 
such that $\|\tilde{U}_0\|_{H^{s}}\leq r$ with $s\geq 2( d+1)+9$, 
then there exists a time $T>0$ 
small enough such that the following holds true.
For any $n\in\N$ the problem $\mathcal{P}_n$ 
admits a unique solution 
$U_n$ in 
$L^{\infty}([0,T);H^s(\T^d;\mathbb{C}^{2}))\cap Lip([0,T);H^{s-2}(\T^d;\mathbb{C}^2))$. 
Moreover it satisfies the following conditions:

\noindent
$(S1)_n$: There exists a constant $C$ depending on $s$ and $\|\tilde{U}_0\|_{H^{s-2}}$ 
such that for any 
$1\leq m\leq n$ one has 
\[
\| U_m\|_{L^{\infty}([0,T);H^s)}\leq \Theta\,, \qquad \Theta:=C r\,.
\]

\noindent
$(S2)_n$: For $1\leq m\leq n$ one has $\|U_m-U_{m-1}\|_{L^{\infty}([0,T),H^{s-2})}
\leq 2^{-m+1}\|\tilde{U}_0\|_{H^{s-2}}$, where we have defined $U_0=0$.
\end{lemma}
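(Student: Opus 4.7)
The plan is to induct on $n$. For the base case $n=1$, note that since $F$ vanishes to order three at the origin, the symbols $A_i(U)$ in \eqref{simboa2} satisfy $A_i(0) = 0$ and the remainder satisfies $R(0) = 0$ (by the quadratic bound \eqref{stimaRRR}). Thus $\mathcal{P}_1$ collapses to the free linear Schr\"odinger equation $\dot U_1 = \ii E \Delta U_1$ with data $\tilde U_0 \in \mathcal{U}$, whose solution is the unitary flow applied to $\tilde U_0$. This lies in $C^0([0,T); H^s)\cap C^1([0,T); H^{s-2})\cap \mathcal{U}$ with $\|U_1(t)\|_{H^s} = \|\tilde U_0\|_{H^s} \leq r$, giving $(S1)_1$, and $\|U_1-U_0\|_{H^{s-2}} = \|\tilde U_0\|_{H^{s-2}}$, giving $(S2)_1$.

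For the inductive step, assume $(S1)_{n-1}$ and $(S2)_{n-1}$. First I verify that $U_{n-1}$ satisfies \eqref{regularity} with some $\mathtt{r}$ depending only on $\Theta$: the $H^s$ bound is $(S1)_{n-1}$, and reading $\partial_t U_{n-1}$ directly from $\mathcal{P}_{n-1}$ and combining \eqref{actionSob} with \eqref{realtaAAA2} and \eqref{stimaRRR} controls $\|\partial_t U_{n-1}\|_{H^{s-2}}$ (and hence the weaker $H^{2s_0+5}$ norm since $s \geq 2s_0+7$). Then Lemma~\ref{esistenza-gen} supplies a unique solution $U_n \in L^\infty([0,T); H^s(\T^d;\C^2))\cap \Lip([0,T); H^{s-2}(\T^d;\C^2))$, which remains in $\mathcal{U}$ by the real-to-real structure preserved along the diagonalization and Duhamel construction. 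The estimate \eqref{stimalineareforza} applied with $U = U_{n-1}$ gives
$$\|U_n\|_{L^\infty([0,T); H^s)} \leq \mathtt{C}_1 e^{\mathtt{C}T}\big[(1+\mathtt{C}T)\,r + \mathtt{C}T\,\Theta\big],$$
where $\mathtt{C}_1$ depends only on $\mathtt{r}$ and $\mathtt{C}$ only on $\Theta$. Setting $\Theta := 2\mathtt{C}_1 r$ and choosing $T > 0$ small enough that the right-hand side is $\leq \Theta$ (which is possible since the bracket tends to $r$ as $T \to 0$) yields $(S1)_n$. This choice of $T$ is uniform in $n$ because the constants depend only on the fixed threshold $\Theta$.

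To obtain $(S2)_n$, set $V_n := U_n - U_{n-1}$; subtracting the two equations, $V_n$ solves
$$\dot V_n = \ii E\, \opbw\!\big(|\x|^2 \uno + A_2(U_{n-1}) + A_1(U_{n-1})\big) V_n + \mathcal{F}_n, \qquad V_n(0) = 0,$$
with forcing
$$\mathcal{F}_n := \ii E\, \opbw\!\big((A_2+A_1)(U_{n-1}) - (A_2+A_1)(U_{n-2})\big) U_{n-1} + R(U_{n-1})U_{n-1} - R(U_{n-2})U_{n-2}.$$
A Lipschitz variant of Lemma~\ref{lem:nonomosimbo} bounds $|A_i(U_{n-1}) - A_i(U_{n-2})|_{\mathcal{N}_{s_0}^i}$ by $C(\Theta)\,\|V_{n-1}\|_{H^{2s_0+1}}$, and since $s-2 \geq 2s_0+7 > 2s_0+1$, applying \eqref{actionSob} to these symbol differences acting on $U_{n-1} \in H^s$, together with an $H^{s-2}$-analogue of \eqref{nave101} for the remainder difference, gives $\|\mathcal{F}_n\|_{L^\infty([0,T); H^{s-2})} \leq C(\Theta)\,\|V_{n-1}\|_{L^\infty([0,T); H^{s-2})}$. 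Since $s-2 \geq 2(d+1)+7$, Lemma~\ref{esistenza-gen} applies at regularity $s-2$; feeding $\mathcal{F}_n$ through the Duhamel formula associated to the homogeneous flow (Remark~\ref{stima-migliore} applied with zero initial data to the linearized problem driven by $\mathcal{F}_n$ as forcing) produces
$$\|V_n\|_{L^\infty([0,T); H^{s-2})} \leq C'(\Theta)\, T\, \|V_{n-1}\|_{L^\infty([0,T); H^{s-2})}.$$
Shrinking $T$ so that $C'(\Theta)T \leq 1/2$ closes the induction.

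The intrinsic obstacle is the two-derivative loss built into the quasilinear iteration: uniform boundedness is obtained in $H^s$ but contraction is available only in $H^{s-2}$, and both must be closed with the same $T$. The hard technical point is thus to upgrade the Lipschitz dependence of the para-differential symbols and of the remainder $R(U)$ to estimates at the lower regularity $H^{s-2}$ with constants controlled by the higher $H^s$ norms; this is exactly what forces the regularity margin $s \geq 2(d+1)+9$, and once it is available, the uniformity of $T$ in $n$ is automatic since all constants are ultimately functions of the a priori threshold $\Theta$.
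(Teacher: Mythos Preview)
Your argument for $(S1)_n$ contains a circularity in the choice of $\Theta$. You take $\mathtt{r}$ to depend on $\Theta$ (bounding the low norms in \eqref{regularity} via $(S1)_{n-1}$ and the equation for $\partial_t U_{n-1}$), note that $\mathtt{C}_1$ in \eqref{stimalineareforza} depends on $\mathtt{r}$, and then set $\Theta := 2\mathtt{C}_1 r$. But then $\mathtt{C}_1 = \mathtt{C}_1(\mathtt{r}(\Theta))$ already depends on $\Theta$, and the relation $\Theta = 2\mathtt{C}_1(\Theta)\, r$ is implicit; it need not have a solution for general $r$, and even if it did, the resulting constant $C = \Theta/r$ would depend on $r$, contradicting the statement of the lemma, which requires $C$ to depend only on $s$ and $\|\tilde{U}_0\|_{H^{s-2}}$.

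The paper breaks this loop by using $(S2)_{n-1}$ \emph{also} in the proof of $(S1)_n$: telescoping the differences gives $\|U_{n-1}\|_{L^\infty H^{s-2}} \leq 2\|\tilde{U}_0\|_{H^{s-2}}$, a bound independent of $\Theta$. Since \eqref{regularity} only involves the $H^{2s_0+7}$ norm of $u$ and the $H^{2s_0+5}$ norm of $\partial_t u$, and $s-2 \geq 2s_0+7$, one may take $\mathtt{r}$ depending only on $\|\tilde{U}_0\|_{H^{s-2}}$; then $\mathtt{C}_1$ is $\Theta$-independent and the choice $\Theta = C r$ with $C = C(s,\|\tilde{U}_0\|_{H^{s-2}})$ becomes legitimate, after which one shrinks $T$ against the $\Theta$-dependent constant $\mathtt{C}$. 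This is precisely why the two statements must be carried jointly through the induction and why the extra two derivatives in the threshold $s \geq 2(d+1)+9$ are needed. Your treatment of $(S2)_n$ is otherwise correct and coincides with the paper's Duhamel argument up to the cosmetic difference that the paper iterates the time integral to obtain the bound $(Kt)^n/n!$ instead of a plain geometric factor.
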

\begin{proof}
The proof of $(S1)_1$ and $(S2)_{1}$ is trivial, 
let us suppose that $(S1)_{n-1}$ and $(S2)_{n-1}$ hold true. 
We prove $(S1)_{n}$ and $(S2)_{n}$. 
We first note that $\|U_{n-1}\|_{L^{\infty}H^{s-2}}$ 
does not depend on $\Theta$. 
Indeed by using $(S2)_{n-1}$ one proves that  
$\|U_{n-1}\|_{L^{\infty}H^{s-2}}\leq 2\|\tilde{U}_0\|_{H^{s-2}}$. 
Therefore Lemma \ref{esistenza-gen} applies with $\mathtt{r}$ chosen large enough
with respect to $\|\tilde{U}_{0}\|_{H^{s-2}}$, and 
the constants $\mathtt{C}, \mathtt{C}_1$ therein  depend on $\Theta$ and 
$\|\tilde{U}_0\|_{H^{s-2}}$ respectively.
We have the estimate
\[  
\|U_{n}(t)\|_{H^{s}}\leq \mathtt{C}_1 e^{\mathtt{C}T}((1+\mathtt{C}T)\|\tilde{U}_0\|_{H^s}
+T\mathtt{C}\|U_{n-1}\|_{L^{\infty}H^s})\,.
\]
To prove the $(S1)_n$ we need to impose the bound 
\begin{equation}\label{sceltaT}
\mathtt{C}_1 e^{\mathtt{C}T}((1+\mathtt{C}T)\|\tilde{U}_0\|_{H^s}
+T\mathtt{C}\Theta)\leq\Theta\,,
\end{equation}
this is possible by choosing 
$\mathtt{C}_1 T\mathtt{C}\leq 1/8$ and 
$8 \mathtt{C}_1 \|\tilde{U}_0\|_{H^{s}}\leq \Theta/2$. 

\noindent Let us prove $(S2)_n$.
We use the notation 
$A(U\,;x,\xi):=|\xi|^2\uno+ A_2(U\,;x,\xi)+A_1(U\,;x,\xi)$ 
and $V_{n}:=U_n-U_{n-1}$. 
The function $V_n$ solves the equation
\begin{equation}\label{s2n}
\partial_t V_n=\ii E\opbw(A(U_{n-1};x,\xi))V_n+f_n\,,
\end{equation}
where 
\begin{equation*}
f_n=\ii E\opbw\big(A(U_{n-1};x,\xi)-A(U_{n-2};x,\xi)\big)U_{n-1}+R(U_{n-1})U_{n-1}-R(U_{n-2})U_{n-2}\,.
\end{equation*}
The equation \eqref{s2n} with $f_n=0$ admits a well-posed flow 
$\Phi(t)$ thanks to Lemma \ref{esistenza-gen}, 
moreover it satisfies the \eqref{bitter}. 
Therefore by Duhamel principle we have
\begin{equation*}
\|V_{n}\|_{H^{s-2}}\leq 
\Big\|\Phi(t)\int_0^t(\Phi(\sigma))^{-1}f_n(\sigma) d\sigma\Big\|_{H^{s-2}}
\leq \mathtt{C}_1^{2}(1+\mathtt{C}T)^2e^{2T\mathtt{C}}\int_0^t\|{f_n}(\sigma)\|_{H^{s-2}}d\sigma\,.
\end{equation*}
Using the Lipschitz estimates on the matrices $A$ (which may be deduced by \eqref{simboStima2})
and $R$ (see the bound \eqref{nave101}),
and the inductive hypothesis one proves that 
$\|f_n\|_{H^{s-2}}\leq \mathtt{C}_2\|V_{n-1}\|_{H^{s-2}} $ 
for a positive constant $\mathtt{C}_2$ depending 
on $\Theta$ and $s$. 
Therefore by induction, using the choice of $T$ done in \eqref{sceltaT}, one obtains $\|V_n(t)\|_{H^{s-2}}\leq \frac{(Kt)^n}{n!}\|\tilde{U}_0\|_{H^{s-2}}$ for some $K$ independent on $n$,  depending on $\mathtt{C}_2$ and hence on   $\Theta$ and $s$. By choosing $T$ in such a way that $KT<1$ one obtains the $(S2)_n$.
\end{proof}
We are now in position to prove the Theorem \ref{main}.
\begin{proof}[{\bf Proof of Theorem \ref{main}}] 
Fix $s> 2(d+1)+9$.
We first prove the existence of a weak solution of the Cauchy problem, then we prove that it is actually continuous and unique, finally we prove the continuity of the solution map.

{\bf Weak solutions.} 
From Proposition \ref{NLSparapara} we know that equation 
\eqref{QLNLS} is equivalent to \eqref{QLNLS444}. 
We consider the sequence of problems 
$\mathcal{P}_n$ previously defined. 
From Lemma \ref{lemma-iterativo} we obtain a sequence 
of solutions $U_n$ which is bounded in 
$L^{\infty}([0,T);H^s(\T^d;\mathbb{C}^{2}))$, 
by a direct computation one proves also that the sequence 
$\partial_tU_n$ is bounded in $L^{\infty}([0,T);H^{s-2}(\T^d;\mathbb{C}^{2}))$. 
Thus 
we get a weak-* limit 
$U\in L^{\infty}([0,T);H^s(\T^d;\mathbb{C}^{2}))\cap 
Lip([0,T);H^{s-2}(\T^d;\mathbb{C}^{2}))$,
satisfying 
\begin{equation}\label{hystoria}
\|U\|_{L^{\infty}H^{s}}\leq \Theta=C \|\tilde{U}_0\|_{H^{s}}\,,
\end{equation}
where $C>0$ is some constant depending on $\|\tilde{U}_0\|_{H^{s-2}}$.
In order to show that the limit $U$ solves the equation 
it is enough to prove that it solves it in the sense of distribution. 
One can check that 
\begin{equation*}
\|\opbw(A(U;x,\xi))U+R(U)U-\opbw(A(U_{n-1};x,\xi))U_n
-R(U_{n-1})U_{n-1}\|_{H^{s-4}}
\end{equation*}
goes to zero when $n$ goes to $\infty$, this is a 
consequence of triangular inequality, Lipschitz estimates on the matrix $A$ and $R$
and Lemma \ref{lemma-iterativo} 
(in particular the boundedness of $U_{n}$ in $H^s(\T^d;\C^2)$ 
and the strong convergence in $H^{s-2}(\T^d;\C^2)$). 
\begin{remark}
The fact that the time of existence $T>0$
 depends only on $\|\tilde{U}_0\|_{H^{s}}$
 is a consequence of Lemma \ref{lemma-iterativo}.
 One could show that, in the case of small initial conditions of size $0<\e<1$, the time $T$
 would be of order $O(\e^{-1})$.
 \end{remark}
   
{\bf Strong solutions.} 
In order to prove that $U$ is in the space 
$C^0([0,T);H^s(\T^d;\mathbb{C}^{2}))$ 
we show that it is the strong limit of function in 
$C^0([0,T);H^s(\mathbb{T}^{d};\mathbb{C}^{2}))$. 
We consider the following  smoothed  version of the initial condition 
\begin{equation}\label{tronco}
V^N_0(x):=S_{\leq N}V_0(x):=(1-S_{> N})V_0(x):=\sum_{|k|\leq N}(V_{0})_ke^{\ii k\cdot x}\,,
\end{equation}
and we define $U^N$ the solution of \eqref{QLNLS444} with initial condition $V^N_0$. The $U^N$, since 
$V^N_0$ is $C^{\infty}$ 
(in particular $H^{s+2}$), 
are in $C^{0}([0,T);H^s(\mathbb{T}^{d};\mathbb{C}^{2}))$. 
We shall prove that $U^N$ converges strongly to $U$.
We fix $\sigma+2+\e\leq s$, $\sigma\geq 2(d+1)+7$, $\e>0$ and  write 
$W:=U-U^N$, then $W$ solves the following problem
\begin{equation*}
\begin{aligned}
\partial_t W&=\ii E \opbw(A(U;x,\xi))W+R(U)W\\
&+\ii E\opbw(A(U)-A(U^N))U^N+(R(U)-R(U^N))U^N\,,
\end{aligned}\end{equation*}
and $W(0,x)=(V_0-V^N_0)(x)$.
We first study the $\s$ norm of the solution $W$. 
If one considers only the first line of the 
equation above then by Lemma \ref{esistenza-gen} 
and Remark \ref{stima-migliore} we have the 
existence of a flow $\phi(t)$ such that
\begin{equation*}
\| \phi(t) W(0,x)\|_{H^{\sigma}}\leq \mathtt{C}_1
(1+\mathtt{C}T)e^{\mathtt{C}T}\|V_0-V_0^N\|_{H^{\s}}\,.
\end{equation*}
By using the Duhamel formulation of the problem  
and  the Lipschitz estimates we obtain 
\begin{equation}\label{bassabassa}
\|W(t)\|_{H^{\sigma}}\leq C_1\|V_0-V_0^N\|_{H^{\s}}
+C_1\int_0^t \big[\|W\|_{H^{\s}}\|U^N\|_{H^{\s+2}}(\tau)
+\|W\|_{H^{\s}}\|U^N\|_{H^{\s}}(\tau)\big] d\tau\,,
\end{equation}
where $C_1>0$ depends on $\|U\|_{H^{\s}}$ and $\|U^{N}\|_{H^{\s}}$ and it is bounded as $U$ goes to $0$.
Note that, since $\s+2< s$, the sequence 
$U^{N}$ is uniformly bounded in $H^{\s+2}(\mathbb{T}^{d};\mathbb{C}^{2})$. 
By Gr\"onwall Lemma we deduce that 
$\|W(t)\|_{H^{\s}}\leq \mathtt{C}_3\|V_0-V_0^N\|_{H^{\s}}$ 
for $\mathtt{C}_3>0$. 
Reasoning analogously for the 
$H^{s}(\mathbb{T}^{d};\mathbb{C}^{2})$ norm one obtains
\begin{equation}\label{sanpellegrino}
\|W(t)\|_{H^s}\leq C\|V_0-V_0^N\|_{H^s}
+C\int_0^t \big[\|W\|_{H^{\s}}\|U^N\|_{H^{s+2}}(\tau)
+\|W\|_{H^s}\|U^N\|_{H^s}(\tau)\big] d\tau\,,
\end{equation}
where $C>0$ depends on $\|U\|_{H^{s}}$ and $\|U^{N}\|_{H^{s}}$ and is bounded as $U$ goes to $0$.
The only unbounded term in the r.h.s. of the latter inequality is 
$\|U^N\|_{H^{s+2}}$. 
To analyze this term one can argue as follows. 
First of all, thanks to \eqref{hystoria}, we have that
$\|U^N\|_{H^{s+2}}\leq \mathtt{C}_4\|V_0^N\|_{H^{s+2}}$, 
where $\mathtt{C}_4$ depends only on $\|V_0^{N}\|_{H^{s}}$.
At this point one wants to use the well known 
smoothing estimate 
$\|V_0^N\|_{H^{s+2}}\lesssim N^2 \|V_0\|_{H^{s}}$. 
To control the loss $N^2$ we use the previous 
estimate we have made on the factor 
$\|W\|_{H^{\s}}\lesssim C\|V_0-V_0^N\|_{H^{\s}}$, 
which may be bounded from above by 
$N^{-2-\varepsilon}\|V_0\|_{H^{s}}$. 
By \eqref{sanpellegrino} we get
\begin{equation}\label{martello2}
\|W(t)\|_{H^s}\leq C\|V_0-V_0^N\|_{H^s}
+C\int_0^t \big[N^{-\e}\|V_0\|^{2}_{H^{s}}
+\|W\|_{H^s}\|U^N\|_{H^s}(\sigma)\big] d\sigma\,.
\end{equation}
Hence we are ready to use Gr\"onwall 
inequality again and conclude the proof.

{\bf Uniqueness.} Let $V_1$ and $V_2$ be two solution of 
\eqref{QLNLS444} with initial condition $V_0$. 
The function $W=V_1-V_2$ solves the problem
\begin{equation*}
\begin{aligned}
\partial_t W&=\ii E \opbw(A(V_1;x,\xi))W+R(V_1)W\\
&+\ii E\opbw(A(V_1)-A(V_2))V_2+(R(V_1)-R(V_2))V_2\,,
\end{aligned}\end{equation*}
with initial condition $W(0,x)=0$.  Arguing as before 
one proves that $\|W(t)\|_{H^{s-2}}=0$ 
for almost every $t$ in $[0,T)$ if $T$ is small enough. 
More precisely one considers the first line 
of the equation and applies Lemma \ref{esistenza-gen} 
and Remark \ref{stima-migliore} 
to obtain a flow of such an equation in 
$H^{s-2}(\mathbb{T}^{d};\mathbb{C}^{2})$ with estimates. 
Then, by means of the  Duhamel formulation of the problem, 
thanks to the fact that the initial condition is equal to zero, 
the estimates on the flow previously obtained 
and Lipschitz estimates, one obtains 
$\|W\|_{H^{s-2}}\leq \frac12\|W\|_{H^{s-2}}$ 
if $T$ is small enough with respect to 
$\|V_1\|_{H^{s}}$ and $\|V_2\|_{H^{s}}$. 
Since $W$ is continuous in time we deduce that is equal to $0$ everywhere. 

{\bf Continuity of the solution map}. 
The strategy is similar to the one adopted in \cite{BSkdv}, \cite{MMT3}.
Let $\{\widetilde{U}_n\}_{n\geq 1}\subset H^s(\T^d;\C^2)$ be a sequence strongly converging to $\widetilde{U}_0$  in $H^s(\T^d;\C^2)$. 
Consider ${U}_n$ and ${U}_0$ 
the  solutions of the problem \eqref{QLNLS444} 
with initial conditions respectively  $\widetilde{U}_n$ and $\widetilde{U}_0$. 
We want to prove that ${U}_n$ converges strongly 
to ${U}_0$ in $H^s(\T^d;\C^2)$. 
Let $T>0$ be small enough and fix  $\varepsilon>0$. 
For $N_{\e}>0$ (to be chosen) we define
\[
\widetilde{U}_{0,\e}:=S_{\leq N_{\e}}\widetilde{U}_0\,,\qquad
\widetilde{U}_{n,\e}:=S_{\leq N_{\e}}\widetilde{U}_n\,,
\]
where $S_{\leq N_{\e}}$ is defined as in \eqref{tronco}
and define ${U}_{n,\e}$ and ${U}_{0,\e}$
the solutions of \eqref{QLNLS444}
with initial conditions 
$\widetilde{U}_{n,\e}$ and $\widetilde{U}_{0,\e}$ respectively.
We note that
\begin{equation}\label{martello}
\|{U}_{n}-{U}_{0}\|_{H^{s}}\leq
\|{U}_{n}-{U}_{n,\e}\|_{H^{s}}+
\|{U}_{n,\e}-{U}_{0,\e}\|_{H^{s}}+
\|{U}_{0,\e}-{U}_{0}\|_{H^{s}}\,.
\end{equation}
Let us consider the first summand in \eqref{martello}.
Let $W:={U}_{n}-{U}_{n,\e}$ then, arguing as done to obtain
the
\eqref{martello2}, one proves
\[
\|W(t)\|_{H^s}\leq C\|\widetilde{U}_{n,\e}-\widetilde{U}_{n}\|_{H^s}
+C\int_0^t \big[N_{\e}^{-\delta}\|\widetilde{U}_{n}\|^{2}_{H^{s}}
+\|W\|_{H^s}\|{U}_{n}\|_{H^s}(\sigma)\big] d\sigma\,,
\]
where  $\delta>0$ is a positive small number.
Since $\widetilde{U}_{n}$ converges to $\widetilde{U}_0$ 
in $H^{s}(\mathbb{T}^{d};\mathbb{C})$
we have, that for $n$ large enough, $\|\widetilde{U}_{n}\|_{H^{s}}
\leq 2\|\widetilde{U}_0\|_{H^{s}}$. 
Then  by \eqref{hystoria} there exists a constant $\Theta$, independent of
$n\in \mathbb{N}$, such that $\|U_{n}\|_{H^{s}}\leq \Theta\|\widetilde{U}_0\|_{H^{s}}$ .
By Gr\"onwall inequality and taking $T>0$ small enough
one gets
\begin{equation}\label{martello4}
\|{U}_{n}-{U}_{n,\e}\|_{H^{s}}\lesssim \|\widetilde{U}_{n,\e}-\widetilde{U}_{n}\|_{H^s}+
N_{\e}^{-\delta}\,.
\end{equation}
Notice that
\[
\begin{aligned}
\|\widetilde{U}_{n,\e}-\widetilde{U}_{n}\|_{H^s}&=\|S_{>N_{\e}}\widetilde{U}_{n}\|_{H^{s}}\\
&\leq \|S_{> N_{\e}}(\widetilde{U}_n-\widetilde{U}_0)\|_{H^{s}}
+\|S_{>N_{\e}}\tilde{U}_{0}\|_{H^{s}}\\
&\leq  \|\widetilde{U}_n-\widetilde{U}_0\|_{H^{s}}+
\|S_{>N_{\e}}\tilde{U}_{0}\|_{H^{s}}\leq \e/6,
\end{aligned}
\]
where to obtain the last inequality we have chosen, independently,
$n$ and $N_{\e}$ large enough.
Then we deduce that the r.h.s. in \eqref{martello4} may be bounded by
$\e/3$ (up to choose a bigger $N_{\e}$).
In the same way one may prove that the third summand in \eqref{martello}
is bounded by $\e/3$ again by choosing $N_{\e}$ large enough.
We now study the second summand in \eqref{martello}.
Arguing as done to obtain the \eqref{sanpellegrino}
we get
\[
\|{U}_{n,\e}-{U}_{0,\e}\|_{H^{s}}\lesssim \|\widetilde{U}_{n,\e}-\widetilde{U}_{0,\e}\|_{H^{s}}
+\int_{0}^{t}
\|{U}_{n,\e}-{U}_{0,\e}\|_{H^{s}}\|U_{0,\e}\|_{H^{s+2}}(\tau)d\tau\,.
\]
By Gr\"onwall inequality and taking $T>0$ small enough
we obtain
\[
\|{U}_{n,\e}-{U}_{0,\e}\|_{H^{s}}\lesssim
\|\widetilde{U}_{n,\e}-\widetilde{U}_{0,\e}\|_{H^{s}}\exp\big(N_{\e}^{2}\big)
\lesssim
\|\widetilde{U}_{n}-\widetilde{U}_{0}\|_{H^{s}}\exp\big(N_{\e}^{2}\big)\,.
\]
By  taking $n\gg N_{\e}$, since $\widetilde{U}_{n}\to\widetilde{U}_0$
in $H^{s}$, we can conclude
\[
\|{U}_{n,\e}-{U}_{0,\e}\|_{H^{s}}\leq \e/3\,.
\]
By \eqref{martello}, we obtain $\|U_n-U_0\|_{H^s}\leq \e$, this implies the thesis.
\end{proof}


\def\cprime{$'$}



\end{document}